\numberwithin{equation}{section}
\newtheorem{main}{Theorem}
\newtheorem{theorem}{Theorem}[section]
\newtheorem{theo}{Theorem}[section] 
\newtheorem{lem}[theo]{Lemma}
\newtheorem{corll}[theo]{Corollary}
\newtheorem{rem}[theo]{Remark}
\newtheorem{clm}[theo]{Claim}
\newtheorem{prop}[theo]{Proposition}
\newcommand{\R}{\mathbb{R}}
\newcommand{\C}{\mathbb{C}}
\newcommand{\N}{\mathbb{N}}
\newcommand{\Z}{\mathbb{Z}}
\DeclareMathOperator{\re}{Re}
\DeclareMathOperator{\im}{Im}
\def \intb {\int_{\partial \Omega}}
\def \intom {\int_{\Omega}}
\def \intr {\int_{\R^3}}
\def \ds  {\delta^{*}}
\def \dt  {\tilde{\delta}}
\def \uu  {\underline{u}}
\def \vv  {\underline{v}}
\def \bh  {\underline{h}}
\def \th  {\underline{h}_{_X}}
\def \thre  {\underline{h}_{1{_X}}}
\def \thima  {\underline{h}_{2{_X}}}
\def \esp {    L^5 H^{1,\frac{30}{11}}(\R \times \R^3)       }
\def \e  {\frac{ e^{- \left| X(t) \right|}}{\left| X(t) \right|} } 
\def \ee {\frac{ e^{-2 \left| X(t) \right|}}{\left| X(t) \right|^2 } } 
\def \eee {\frac{ e^{-3 \left| X(t) \right|}}{\left| X(t) \right|^3 } } 
\def \eeee {\frac{ e^{-4 \left| X(t) \right|}}{\left| X(t) \right|^4 } }
\def \nn   {\vec{n}}
\def \Nls {{\rm{(NLS$_\Omega$) }}}
\def \NNls {{\rm{NLS$_\Omega$ }}}
\def \nls {{\rm{(NLS$_{\R^3}$) }}}
\def \nnls {{\rm{NLS$_{\R^3}$ }}}
\def \hh   {\widetilde{h}}
\title[3D  NLS outside a strictly convex obstacle]{Threshold solutions in the focusing 3D cubic NLS equation outside a strictly convex obstacle}
\author{Thomas Duyckaerts }
\email[]{duyckaer@univ-paris13.fr}
\address{LAGA, UMR 7539, Institut Galil\'ee, Universit\'e Sorbonne Paris Nord and Institut Universitaire de France}
\author{Oussama Landoulsi}
\email[]{landoulsi@univ-paris13.fr}
\address{LAGA, UMR 7539, Institut Galil\'ee, Universit\'e Sorbonne Paris Nord}
\author{Svetlana Roudenko }
\email[]{sroudenko@fiu.edu}
\address{Department of Mathematics \&  Statistics, Florida International University}
\begin{document}

\maketitle
\begin{abstract}
We study the dynamics of the focusing $3d$ cubic nonlinear Schr\"odinger equation in the exterior of a strictly convex obstacle at the mass-energy threshold, namely, when $E_{\Omega}[u_0] M_{\Omega}[u_0] = E_{\R^3}[Q] M_{\R^3}[Q]$ and $\left\| \nabla u_0 \right\|_{L^{2}(\Omega)} \left\| u_0 \right\|_{L^{2}(\Omega)}< \left\| \nabla Q \right\|_{L^2(\R^3)} \left\| Q \right\|_{L^2(\R^3)} ,$ where $u_0 \in H^1_0(\Omega)$ is the initial data, $Q$ is the ground state on the Euclidean space, $E$ is the energy and $M$ is the mass. In the whole Euclidean space Duyckaerts and Roudenko (following the work of Duyckaerts and Merle on the energy-critical problem) have proved  the existence of a specific global solution that scatters for negative times and converges to the soliton in positive times. We prove that these heteroclinic orbits do not exist for the problem in the exterior domain and that all solutions  at the threshold are globally defined and scatter. The main difficulty is the control of the space translation parameter, since the Galilean transformation is not available.  
\end{abstract}

{
  \hypersetup{linkcolor=black}
  \tableofcontents
}

\section{Introduction}
We consider the focusing nonlinear Schr\"odinger equation in the exterior of a smooth compact strictly convex obstacle $\Theta \subset \R^3$ with Dirichlet boundary conditions: 
\begin{equation}
\tag{NLS$_{\Omega}$}
\begin{cases}
\label{NLS} 		
i\partial_tu+\Delta_{\Omega} u = -|u|^{2}u  \qquad  &  (t,x)\in \R \times\Omega ,\\ 
u(t_0,x) =u_0(x)  &  x \in \Omega  , \\
u(t,x)=0  &   (t,x)\in \R \times \partial\Omega ,
\end{cases}		
\end{equation}		
     where $\Omega=\R^3 \setminus \Theta$, $\Delta_{\Omega}$ is the Dirichlet Laplace operator on $\Omega$ and $t_0 \in \R$ is the initial time. Here, $u$ is a complex-valued function,
\begin{align*}
 u:  \R &\times  \Omega \longrightarrow \C \\
 	(t &, x)  \longmapsto  u(t,x) .
 \end{align*}
   We take the initial data $u_0 \in H^1_0(\Omega),$ where $H^1_0(\Omega)$ is the Sobolev space 
   $$ \{ u \in L^2(\Omega) \; \text{such that} \; \left| \nabla u \right| \in L^2(\Omega) \; \text{and} \; u_{|\partial \Omega}=0\}. $$
    
    The \NNls equation is locally wellposed in $H^1_0(\Omega),$ see \cite{AnRa08}, \cite{PlVe09}, \cite{Ivanovici10} and \cite{BlairSmithSogge2012}.  The solutions of the \NNls equation satisfy the mass and energy conservation laws: 
 \begin{align*}
 M_{_\Omega}[u(t)] &:= \int_{\Omega} |u(t,x)|^2 dx = M[u_0] ,\\
E_{\Omega}[u(t)] &:= \frac{1}{2} \int_{\Omega} |\nabla u(t,x)|^2 dx - \frac{1}{4} \int_{\Omega}\left|u(t,x)\right|^{4}dx=E[u_0].  
\end{align*} 
 
 Unlike the nonlinear Schr\"odinger equation \nnls posed on the whole Euclidean space $\R^3,$ the \NNls equation does not have the momentum conservation. \\
 
The \nnls equation is invariant by the scaling transformation, that is, 
 \begin{equation*}
  u(t,x) \longmapsto \lambda u(\lambda x,\lambda^2 t) \;  \; \text{ for }  \lambda >0.
 \end{equation*}
 This scaling identifies the critical Sobolev space $\dot{H}^{\frac 12 }_{x}$. Since the presence of an obstacle does not change
the intrinsic dimensionality of the problem, we may regard the \NNls equation as being $H^1(\Omega)$-subcritical and $L^2(\Omega)$-supercritical.  \\ 

In this paper,  we study the global well-posedness and scattering of solutions to the \NNls equation. We start recalling earlier results on global existence and scattering (\cite{PlVe09}, \cite{KiVisnaZhang16}): if $u$ has a finite Strichartz norm (Cf. Theorem \ref{Strichartz}), then $u$ scatters in $H^1_0(\Omega),$ i.e., 
\begin{align*}
    \exists  u_{\pm} \in H^1_0(\Omega) \quad \text{ such that } \quad  \lim_{t \longrightarrow  \pm \infty } \left\| u(t)-e^{it \Delta_\Omega} u_{\pm} \right\|_{H^1_0(\Omega)}=0.
    \end{align*} 
If initial data are sufficiently small in $H^1_0(\Omega),$ then the corresponding solutions $u(t)$ are global and scatter in both time directions. \\

Global existence and scattering for large data was studied for the \nnls equation, posed on the whole Euclidean space $\R^3,$ in several articles in different contexts. The \nnls equation
has solutions of the form $e^{it \Delta_{\R^3}} Q,$ where $Q$ solves the following nonlinear elliptic equation 
 \begin{equation}
 \label{eq_Q}
 \begin{cases}
 -  \, Q +  \Delta Q +  \left| Q   \right|^{2} Q =0 ,   \\
 \;  Q   \in H^1(\R^3).
 \end{cases}
 \end{equation} 
 
In this paper, we denote by $Q$ the ground state solution, that is, the unique radial, vanishing at infinity, positive solution of \eqref{eq_Q}. Such $Q$ is smooth, exponentially decaying at infinity, and characterized as the unique minimizer for the Gagliardo-Nirenberg inequality up to scaling, space translation and phase shift, see \cite{Kwong89}. \\

In \cite{HoRo08}, the authors have studied the behavior (i.e., scattering and global existence)\footnote{also, blow-up, however, we do not need it in this paper} of the radial solutions of the focusing cubic \nnls equation on $\R^3$, when the initial data satisfies a criterion given by the ground state threshold, that is, if $u_0 \in H^1(\R^3),$ radial and satisfies  $M_{\R^3}[u_0]E_{\R^3}[u_0]<M_{\R^3}[Q]E_{\R^3}[Q]$ and $ \left\| \nabla u_0 \right\|_{L^2(\R^3)} \left\| u_0 \right\|_{L^2(\R^3)} < \left\| \nabla Q \right\|_{L^2(\R^3)} \left\| Q \right\|_{L^2(\R^3)},$ then $u$ scatters in $H^1(\R^3).$ Note that, this criterion is expressed in terms of the scale-invariant quantities $\left\| \nabla u_0 \right\|_{L^2} \left\| u_0 \right\|_{L^2}$ and $M[u_0]E[u_0].$ This result was later extended to the non-radial case in \cite{DuHoRo08} and
to arbitrary space dimensions and focusing intercritical power nonlinearities in \cite{FaXiCa11} and \cite{Guevara14}. \\ 

The question about the global existence and scattering for the focusing cubic \NNls outside of a strictly convex obstacle was studied in \cite{KiVisnaZhang16}, which we state next.
\begin{theorem}
\label{theo-killip-all}
Let $u_0 \in H^1_0(\Omega)$ satisfy
\begin{align}
\label{u<Qthro}
    \left\| u_0 \right\|_{L^2\footnotesize{\left( \Omega \right)}}\left\| \nabla u_0 \right\|_{L^2\footnotesize{\left( \Omega \right)}}  & < \left\| Q  \right\|_{L^2\footnotesize{\left(\R^3 \right)}} \left\| \nabla Q \right\|_{L^2\footnotesize{\left(\R^3 \right)}} , \\
    \label{MEuu=MEQQTheo}
    M_{\Omega}[u_0]E_{\Omega}[u_0]& < M_{\R^3}[Q]  E_{\R^3}[Q].
\end{align}
Then $u$ scatters in $H^1_0(\Omega),$ in both time directions.
\end{theorem}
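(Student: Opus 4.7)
The plan is to carry out the Kenig--Merle concentration-compactness / rigidity scheme in the exterior of the obstacle, in the spirit of \cite{KiVisnaZhang16}.

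\textbf{Step 1 (Variational).} I would first show that the sub-threshold conditions \eqref{u<Qthro}--\eqref{MEuu=MEQQTheo} are preserved by the flow. Since $H^1_0(\Omega) \hookrightarrow H^1(\R^3)$ by extension by zero, the sharp Gagliardo--Nirenberg inequality on $\R^3$ transfers to $\Omega$ with the same best constant; on the set where \eqref{u<Qthro} holds, this yields a coercive control of $\|\nabla u\|_{L^2(\Omega)}\|u\|_{L^2(\Omega)}$ by the conserved product $M_\Omega E_\Omega$. A continuity argument combined with the conservation of mass and energy propagates \eqref{u<Qthro} along the flow, giving a uniform $H^1_0$-bound and hence global existence in both time directions.

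\textbf{Step 2 (Concentration-compactness).} Arguing by contradiction, set
\[
\mathcal{E}_c := \sup \bigl\{ \mathcal{E} \le M_{\R^3}[Q]E_{\R^3}[Q] \; : \; \text{every } u_0 \in H^1_0(\Omega) \text{ with \eqref{u<Qthro} and } M_\Omega E_\Omega \le \mathcal{E} \text{ scatters} \bigr\},
\]
and assume $\mathcal{E}_c < M_{\R^3}[Q]E_{\R^3}[Q]$. Applying a linear profile decomposition adapted to $H^1_0(\Omega)$ to a sequence of non-scattering data with $M_\Omega E_\Omega \to \mathcal{E}_c$ produces, after extraction, profiles with scales $h_n>0$ and translations $x_n\in\Omega$. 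Three regimes must then be analyzed: (i) profiles with $\operatorname{dist}(x_n,\partial\Omega)/h_n \to \infty$ reduce after translation and rescaling to \nnls on $\R^3$ and scatter by the sub-threshold result of \cite{DuHoRo08}; (ii) profiles concentrating at a boundary point at a vanishing scale reduce to NLS on a half-space with Dirichlet boundary, handled by an odd-reflection extension together with \cite{DuHoRo08}; (iii) the remaining scenario, profiles at bounded scale and bounded translation, must contain a single element, the critical element $u_c$. A stability/perturbation argument closes (i)--(ii), leaving only the existence of $u_c$.

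\textbf{Step 3 (Rigidity).} The critical element $u_c$ is a global sub-threshold solution of \Nls whose orbit is precompact in $H^1_0(\Omega)$ modulo a continuous translation path $t \mapsto x(t) \in \Omega$. To rule it out, I would use a truncated Morawetz/virial identity with weight $a(x) = |x|$: the interior bulk term is coercive thanks to the sub-threshold variational estimate of Step 1, while the boundary contribution along $\partial\Omega$ is proportional to the second fundamental form of $\partial\Theta$, which is non-negative by strict convexity of $\Theta$ and therefore contributes with a favorable sign. Almost-periodicity of $u_c$ forces the Morawetz flux to remain uniformly bounded in time, contradicting the monotonicity unless $u_c \equiv 0$.

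\textbf{Main obstacle.} The sharpest difficulty, emphasized in the abstract of the present paper, is the control of the translation parameter $x(t)$: in the absence of momentum conservation or Galilean invariance, one cannot normalize $x(t) \equiv 0$. To close the Morawetz step one must combine the repelling effect of the Dirichlet condition on the convex boundary with the uniform $H^1_0$-bound from Step 1 to prevent $|x(t)|$ from escaping faster than the Morawetz weight can accommodate; this is the key geometric point where the strict convexity of $\Theta$ enters the argument.
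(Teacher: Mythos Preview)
This theorem is \emph{not} proved in the present paper; it is quoted as a prior result of Killip--Visan--Zhang \cite{KiVisnaZhang16}, which the paper then uses as a black box. The paper's own contribution is the threshold case (Theorem~\ref{main-theo}), where $M_\Omega[u]E_\Omega[u]=M_{\R^3}[Q]E_{\R^3}[Q]$. So there is no ``paper's own proof'' to compare against here.

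That said, two substantive points about your outline are worth flagging. First, in Step~2 you introduce scales $h_n$ and a half-space limiting regime. This is the picture for the \emph{energy-critical} (quintic) problem, but the present equation is cubic and $H^1$-subcritical: the relevant profile decomposition (Theorem~\ref{profile decom}) carries only time shifts $t_n^j$ and space translations $x_n^j$, with no scaling parameter. Consequently regime~(ii) does not occur; the only dichotomy is $x_n^j\equiv 0$ versus $|x_n^j|\to\infty$, and in the latter case the limiting problem is \nnls on the whole space, handled via Theorem~\ref{contruction of v_n if d(x_n)}.

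Second, your ``Main obstacle'' paragraph imports the difficulty highlighted in the abstract, but that difficulty pertains to the \emph{threshold} problem of Theorem~\ref{main-theo}, not to the strict sub-threshold statement here. Below threshold, the variational gap from Step~1 yields a uniform lower bound $\mathcal{Y}_R''(t)\ge c>0$ in the localized virial, and the control of $x(t)$ follows from compactness of the orbit by the standard argument of \cite{DuHoRo08}; no Galilean boost is needed. The delicate $x(t)$ analysis via modulation and the auxiliary $\R^3$-parameter $X_1(t)$ (Section~\ref{sec-bddxx}) is specific to the threshold case, where $\delta(t)$ can approach zero and the virial loses its coercivity.
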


The purpose of this paper is to study the behavior of solutions to the \NNls equation exactly at the mass-energy threshold, i.e., when \begin{equation}
\label{ME_u=ME_Q}
E_{\Omega}[u]M_{\Omega}[u]=E_{\R^3}[Q]M_{\R^3}[Q].
\end{equation} 
In \cite{DuRo10} T.\,Duyckaerts and S.\,Roudenko described the behavior of the solutions of the \nnls equation at the mass-energy threshold. At this mass-energy level, the \nnls equation has a richer dynamics for the long time behavior of the solutions compared to the result mentioned above. The authors proved the existence of special solutions, denoted by $Q^{+}$ and $Q^{-}.$ These solutions approach the soliton, up to symmetries, in one time direction: there exits $e_0>0$ such that 
\begin{equation}
\label{appro-Q}
  \left\|  Q^{\pm} -e^{it}Q \right\|_{H^1(\R^3)} \leq c e^{-e_0 t}  \quad \mbox{for}\;   t \geq 0.
  \end{equation}
The behavior of $Q^{\pm}$ in the opposite time direction is completely different: $Q^{-}$ scatters for negative time but $Q^{+}$ has a finite time of existence. The existence of these special solutions is derived from the existence of the two real nonzero eigenvalues for the linearized operator around the soliton $e^{it} Q.$ These special solutions have the same mass-energy of the soliton, $M_{\R^3}[Q^{\pm}]E_{\R^3}[Q^{\pm}]=M_{\R^3}[Q]E_{\R^3}[Q]$, however, $\left\| \nabla Q^{-} (t) \right\|_{L^2(\R^3)} < \left\| \nabla Q \right\|_{L^2(\R^3)}$ and $\left\| \nabla Q^{+}(t) \right\|_{L^2(\R^3)} > \left\| \nabla Q \right\|_{L^2(\R^3)},$ for all $t$ in the interval of existence of $Q^{\pm}.$ Furthermore, if we consider initial data $u_0 $  such that \eqref{ME_u=ME_Q} holds and $\left\| u_0 \right\|_{L^2(\R^3)} \left\| \nabla u_0 \right\|_{L^2(\R^3)}  < \left\| Q \right\|_{L^2(\R^3)} \left\| \nabla Q \right\|_{L^2(\R^3)}, $ respectively, $\left\| u_0 \right\|_{L^2(\R^3)} \left\| \nabla u_0 \right\|_{L^2(\R^3)} > \left\| Q \right\|_{L^2(\R^3)} \left\| \nabla Q \right\|_{L^2(\R^3)} ,$ then the corresponding solution $u(t)$ of the \nnls equation is global and either (i) scatters in $H^1(\R^3)$ or $u \equiv Q^{-}$, up to the symmetries, or (ii) $u(t)$ has a finite time of existence or $u \equiv Q^{+}$, up to the symmetries. Furthermore, if the gradient of $u_0$ is equal to the gradient of $Q,$ then $u(t)$ is equal to $Q,$ up to the symmetries.  \\

Note that for the \NNls equation, there do not exist analogs of the solutions $e^{it}Q$, $Q^{\pm}$ at the threshold $M_{\Omega}[u]E_{\Omega}[u]=M_{\R^3}[Q]E_{\R^3}[Q].$ Indeed there is no function $u_0 \in H^1_0(\Omega) $  satisfying \eqref{ME_u=ME_Q} and $ \left\| \nabla u_0 \right\|_{L^2(\Omega)} \left\| u_0 \right\|_{L^2(\Omega)} =  \left\| \nabla Q \right\|_{L^2(\R^3)} \left\| Q \right\|_{L^2(\R^3)} .$ By extending $u_0$ with $0$ on the obstacle, the solution $u_0$ must be equal to $Q,$ up to the symmetries, which would not satisfy Dirichlet boundary conditions.  Similarly, in the presence of the obstacle there is no function in $H^1_0(\Omega)$ such that \eqref{appro-Q} holds, since such a solution has to converge to $Q$  for the sequence of times $t_n=2\pi n$, contradicting the fact that $Q$ does not satisfy Dirichlet boundary conditions.  \\

We now state the main result of this paper. 
\begin{main}
\label{main-theo}
Let $u_0 \in H^1_0(\Omega)$ and let $u(t)$ be the corresponding solution to \Nls such that $u_0$ satisfy 
\begin{equation}
\label{condition-theo-prin}
M_{\Omega}[u]E_{\Omega}[u]=M_{\R^3}[Q]E_{\R^3}[Q] \quad  \text{ and } \left\| u_0 \right\|_{L^2( \Omega )} \left\| \nabla u_0 \right\|_{L^2(\Omega) } < \left\| Q  \right\|_{L^2{\left(\R^3 \right)}} \left\| \nabla Q \right\|_{L^2{\left(\R^3 \right)}}. 
\end{equation}
Then  $u$ scatters in $H^1_0(\Omega)$ in both time directions. 
\end{main}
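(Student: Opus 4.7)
My plan is to follow the concentration-compactness and rigidity scheme of Kenig-Merle, adapted to the exterior domain in the spirit of \cite{KiVisnaZhang16}, with a critical element analysis tailored to the mass-energy threshold. Arguing by contradiction, assume some $u_0$ satisfying \eqref{condition-theo-prin} yields a solution that fails to scatter in, say, forward time. A linear profile decomposition for $e^{it\Delta_\Omega}$, together with the Gagliardo-Nirenberg inequality on $\Omega$ and the strictly sub-threshold scattering of Theorem \ref{theo-killip-all}, produces a minimal non-scattering critical element $u_c$ still satisfying \eqref{condition-theo-prin} and whose orbit is precompact in $H^1_0(\Omega)$ modulo a spatial translation $x(t)\in\overline{\Omega}$ and a phase $\theta(t)$; no scale modulation appears since the problem is $H^1$-subcritical.

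Next, the saturation of the mass-energy identity forces $u_c$ to nearly realize the Gagliardo-Nirenberg minimum along a suitable sequence: after extension by zero there exist $(\theta_n, x_n, t_n)$ with $\mathrm{dist}(x_n, \Theta)\to\infty$ such that $e^{-i\theta_n} u_c(t_n, \cdot + x_n) \to Q$ in $H^1(\R^3)$. The blow-up of the distance to $\Theta$ is unavoidable since $Q$ is strictly positive whereas $u_c$ vanishes on $\partial\Omega$. Standard modulation near $Q$ then promotes this pointwise proximity to a moving-soliton ansatz $u_c(t,x) = e^{i\theta(t)}\bigl(Q(x - x(t)) + h(t,x)\bigr)$, with $h(t)$ small and suitably orthogonal to the null directions of the linearized operator, valid on every time interval on which $u_c$ is close to a translate of $Q$.

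The core of the argument is the global control of $x(t)$, highlighted in the abstract as the main difficulty. I would first establish a dichotomy: either $|x(t)|$ stays bounded, or $|x(t)|\to\infty$. In the bounded regime the orbit of $u_c$ itself is precompact in $H^1_0(\Omega)$, and a truncated virial / Morawetz identity exploiting the strict convexity of $\Theta$, as in \cite{KiVisnaZhang16}, forces $u_c \equiv 0$, a contradiction. In the escaping regime, a local limit of $u_c(t,\cdot + x(t))$ on bounded sets yields a solution $v$ of \nls satisfying $M_{\R^3}[v]E_{\R^3}[v] = M_{\R^3}[Q]E_{\R^3}[Q]$ and $\|v\|_{L^2}\|\nabla v\|_{L^2} \leq \|Q\|_{L^2}\|\nabla Q\|_{L^2}$. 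By the Duyckaerts-Roudenko classification \cite{DuRo10}, $v$ either scatters, contradicting the non-scattering of $u_c$ by long-time perturbation theory, or equals $Q^-$ up to symmetries, in which case \eqref{appro-Q} would force $u_c$ to remain $H^1$-close to a translate of the soliton on an unbounded time interval.

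To rule out this last possibility --- the true analogue of the non-existent threshold heteroclinic orbit --- I plan to introduce a truncated momentum functional of the form $\im\int_\Omega \chi(x)\,\overline{u_c}\,\nabla u_c\,dx$, with $\chi$ a smooth cutoff supported far from $\partial\Omega$. Differentiating in time along the soliton ansatz produces an ODE for $\dot x(t)$ which, combined with the precompactness modulo translation, is incompatible either with $x(t)$ remaining in $\Omega$ or with the Dirichlet boundary condition. The hardest step will be precisely this derivation: without a Galilean symmetry or momentum conservation, $x(t)$ is coupled nonlinearly to the obstacle geometry, and the strict convexity of $\Theta$ will be essential to control the error terms produced by the cutoff and by the interaction of the soliton tail with $\partial\Omega$.
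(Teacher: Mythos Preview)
Your overall architecture---contradiction, profile decomposition yielding a precompact orbit modulo translation, modulation near $Q$, and a rigidity argument---matches the paper. The divergence is in how you handle the translation parameter $x(t)$, and there your proposal has a genuine gap.

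First, the dichotomy ``either $|x(t)|$ stays bounded, or $|x(t)|\to\infty$'' is not a dichotomy: a priori $x(t)$ can be unbounded yet return infinitely often near the obstacle. You would need to prove this, and nothing in your outline does. The paper avoids any such case split and proves directly that $x(t)$ is bounded (Proposition~\ref{bddX}), via a quantitative feedback loop you do not propose. Specifically, it shows (i) $|x(t_n)|\to\infty \iff \delta(t_n)\to 0$ (this is where the Duyckaerts--Roudenko classification on $\R^3$ is actually used, to exclude both scattering and the $Q^-$ scenario in one blow, cf.\ Lemma~\ref{suiteDelta-->0}); (ii) a localized virial identity gives $\int_\sigma^\tau \delta(t)\,dt \le C\bigl[1+\sup_{[\sigma,\tau]}|x(t)|\bigr]\bigl(\delta(\sigma)+\delta(\tau)\bigr)$; (iii) the modulation equations give $|X'(t)|\le C\delta(t)$, hence $|x(\tau)-x(\sigma)|\le C\int_\sigma^\tau\delta(t)\,dt$. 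Chaining (ii) and (iii) at times $\tau_n$ where $|x(\tau_n)|=\sup_{[0,\tau_n]}|x|\to\infty$ forces $|x(\tau_n)|\le C|x(\tau_{N_0})|$, a contradiction. A crucial intermediate step (Lemma~\ref{lem-aproox-X-delta}) compares $X(t)$ with an auxiliary $\R^3$-modulation parameter to obtain $e^{-|X(t)|}/|X(t)|\le C\delta(t)$, which cleans up all the error terms in Section~\ref{Sec-Modulation}.

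Second, your plan to rule out the $Q^-$ scenario by a truncated momentum functional and an ODE for $\dot x$ is the step you yourself flag as hardest, and it is not how the paper proceeds. The paper never isolates $\dot x$ from a momentum identity; the bound $|X'|\le C\delta$ comes straight from differentiating the orthogonality conditions in the modulation lemma. Once $x(t)$ is known to be bounded, the endgame is short: a second pass with the localized virial yields $\frac1T\int_0^T\delta(t)\,dt\to 0$ (Lemma~\ref{lem-convinmean}), so along a subsequence $\delta(t_n)\to 0$ with $x(t_n)$ bounded, forcing $u(t_n)$ to converge in $H^1_0(\Omega)$ to a translate of $Q$---impossible since $Q\notin H^1_0(\Omega)$. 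So the paper's route is: bound $x(t)$ first, then the $Q^-$-type behaviour is excluded for free by the Dirichlet condition, rather than your order (assume escape, then fight $Q^-$ with momentum).
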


The proof of this theorem is based on the approach of the Euclidean setting results in \cite{DuMe09a} and \cite{DuRo10}. The first step is similar to the proof of the compactness of the critical solution developed by 
C.\,Kenig and F.\, Merle in \cite{KeMe06} in the energy-critical setting and adapted to the energy-subcritical case  in \cite{HoRo08} and \cite{DuHoRo08}. It uses a concentration-compactness argument that requires a profile decomposition as in the works of F.~Merle and L.~Vega \cite{MeVe98}, P.\,G{\'e}rard  \cite{Gerard98}, and S.\,Keraani \cite{Keraani01},
adapted by R.\,Killip, M.\,Visan and X.\,Zhang for the problem in the exterior of a convex obstacle in \cite{KiVisnaZhang16} (for energy-critical) and in \cite{KiVisZha16} (for the energy-subcritical).  
The second step of the proof is a careful study of the space translation and phase parameters for a solution of \NNls that is close to $Q,$ up to the transformations. In \cite{DuRo10}, the translation parameter is controlled using conservation of the momentum, leading ultimately to the fact that $e^{-it}u$ converges exponentially to $Q$. This conservation law is not available for the \NNls equation, and we must achieve this control through a new intricate limiting argument, that relies among other things on the uniqueness theorem in \cite{DuHoRo08}. \\

In \cite{Ouss20}, the second author has proved that when the obstacle is the Euclidean ball of $\R^3$, solutions such that 
$M_{\Omega}[u]E_{\Omega}[u]<M_{\R^3}[Q]E_{\R^3}[Q]$ and  $\left\| u_0 \right\|_{L^2( \Omega )} \left\| \nabla u_0 \right\|_{L^2(\Omega) } > \left\| Q  \right\|_{L^2{\left(\R^3 \right)}} \left\| \nabla Q \right\|_{L^2{\left(\R^3 \right)}}$ with a finite variance and a certain symmetry blow up in finite time. 
In view of the known results on $\R^3$, one should expect blow-up in finite or infinite time for all solutions of this type, however, the blow-up for the \NNls equation is a delicate issue. One of the difficulties is the appearance of boundary terms with the wrong sign in the virial identity that is used to prove blow-up on $\R^3$.   Blow-up is also expected in the threshold case $M_{\Omega}[u]E_{\Omega}[u]=M_{\R^3}[Q]E_{\R^3}[Q]$ and $\left\| u_0 \right\|_{L^2( \Omega )} \left\| \nabla u_0 \right\|_{L^2(\Omega) } >\left\| Q  \right\|_{L^2{\left(\R^3 \right)}} \left\| \nabla Q \right\|_{L^2{\left(\R^3 \right)}}$, which is an open question. Let us mention however that linear scattering is precluded for these solutions, since they satisfy the bound $\left\| u(t) \right\|_{L^2( \Omega )} \left\| \nabla u(t) \right\|_{L^2(\Omega) } > \left\| Q  \right\|_{L^2{\left(\R^3 \right)}} \left\| \nabla Q \right\|_{L^2{\left(\R^3 \right)}}$ on their domains of existence, and the condition on the energy.  \\

When $\Omega=\R^3$, K.~Nakanishi and W.~Schlag \cite{NakanishiSchlag12} described the dynamics of solutions slightly above the mass-energy threshold, that is such that $E_{\R^3}[Q]_{\R^3}M[Q]\leq E_{\R^3}[u_0]M_{\R^3}[u_0]<E_{\R^3}[Q]_{\R^3}M[Q]+\varepsilon$ for a small $\varepsilon>0$, showing that all $9$ expected behaviors (any combination of blow-up in finite time, linear scattering or scattering to the ground state solution) do indeed occur. Some sufficient conditions for scattering and blow-up in this regime are given by the first and third authors in \cite{DuRo15}. The analog of the result in \cite{NakanishiSchlag12} outside of an obstacle is currently out of reach, due to insufficient understanding of blow-up in finite time. Let us mention however that in this case, the soliton-like behavior is possible. Indeed, the second author in \cite{Ou19} constructed a solution behaving as a travelling wave in $\R^3$ for large $t,$ moving away from the obstacle with an arbitrary small speed $v$ and such that $E[u_0]M[u_0]=E[Q]M[Q]+c |v|^2$ for a constant $c>0$. See also \cite{OuKaiSvetTh20} for numerical investigations in this regime. \\

The study of the influence of the underlying space geometry on the dynamics of the wave-type equations started long ago. Let us mention some of the works 
on a wave-type equation in the exterior of an obstacle with Dirichlet or Neuman boundary conditions. In $1959,$ H.~W.~Calvin studied  the rate of decay of solutions to the linear wave equation outside of a sphere, see \cite{Wilcox59}. Later, Morawetz extended this result to a star-shape obstacle, see \cite{Morawetz61}, \cite{Morawetz62} and also \cite{LaxMorawPhillip62}, \cite{LaxMorawPhillip63}. After that, various results were obtained for an almost-star shape, non-trapping and moving obstacle, see \cite{Ivri69},
\cite{MoraTalstonStrauss77}, \cite{MoraRalstonStraussCorec78} and  \cite{CooperStrauss76}. 
The Cauchy theory for the \NNls equation with initial data in $H^1_0(\Omega),$ was initiated in $2004$ by N.\,Burq, P.\,Gérard and N.\,Tzvetkov in  \cite{BuGeTz04a}. Assuming that the obstacle is non-trapping, the authors proved a local existence result for the $3d$ sub-cubic (i.e., $p<3$) \NNls equation. This was later extended by R.\,Anton in \cite{AnRa08} for the cubic nonlinearity, by F.\,Planchon and L.\,Vega in \cite{PlVe09} for the energy-subcritical \NNls equation in dimension $d=3$ (i.e., $1<p<5$) and by F.\,Planchon and O.\,Ivanovici in \cite{MR2683754} for the energy-critical case in dimension $d=3$ (i.e, $p=5$), see also  \cite{BlairSmithSogge2012} and \cite{Ivanovici07}, \cite{Ivanovici10}, \cite{DongSmithZhang2012} for convex obstacle. The local well-posedness in the critical Sobolev space was first obtained in \cite{MR2683754}, for $3+\frac 25< p< 5.$  In \cite{Ou19}, the second author extended this result for $\frac 73<p <5,$ using the fractional chain rule in the exterior of a compact convex obstacle from \cite{killip2015riesz}. \\

The paper is organized as follows: In Section \ref{Preliminaries}, we recall known properties of the ground state and coercivity property associated to the linearized operator under certain orthogonality conditions. There, we also recall Strichartz estimates, stability theory and the profile decomposition for the \NNls equation outside of a strictly convex obstacle. 
In Section \ref{Sec-Modulation}, we discuss modulation, in particular, in \S \ref{modulation} we use the modulation in phase rotation and in space translation parameters near the  truncated ground state solution, in order to obtain orthogonality conditions.  Section \ref{sec-scattering} is dedicated to the proof of the main theorem. In \S \ref{Compactness properties} we use the profile decomposition to prove a compactness property, which yields the existence of a continuous translation parameter $x(t)$ such that the extension of a non-scattering solution $\uu(t,x+x(t)),$ that satisfies \eqref{condition-theo-prin}, is compact in $H^1(\R^3). $ In \S \ref{sec-bddxx}, we control the space translation $x(t)$ by approximating it by auxiliary translation parameter given by modulation on $\R^3,$ in \cite{DuRo10}. Moreover, we use a local virial identity with estimates from previous sections on the modulation parameter to prove that $x(t)$ is bounded. In \S \ref{Sec-convergence-mean}, we prove that the parameter $\delta(t):= \left| \left\| \nabla Q \right\|_{L^2} -\left\| \nabla u \right\|_{L^2} \right|$ converges to $0$ in mean. 
Finally, we conclude the proof of Theorem \ref{main-theo} using the compactness properties with the control of the space translation parameter $x(t)$ and the convergence in mean.  
\smallskip

{\bf Acknowledgments.}
T.D. was partially supported by Institut Universitaire de France and Labex MME-DII. Part of the research on this project was done while O.L. was visiting the Department of Mathematics and Statistics at Florida International University, Miami, USA, during his PhD training. He thanks the department and the university for hospitality and support. S.R. was partially supported by the NSF grant DMS-1927258. Part of O.L.'s research visit to FIU was funded by the same grant DMS-1927258 (PI: Roudenko).  \\
\smallskip

\textbf{Notation.} %\\
Define $\Psi$ as a $C^{\infty}$ function such that
 \begin{equation}
\label{def-psi}
\Psi=\begin{cases}
0 & \; \text{near} \; \Theta ,\\
1 & \; \mbox{if} \;  \left|x\right| \gg 1 .
\end{cases}  
\end{equation}
We write $ a = O (b), $ when $a$ and $b$ are two quantities, and there exists a positive constant $C$  independent of parameters,  such that $\left| a  \right| \leq C \,  b ,$ and  $a \approx  b ,$ when $a =O(b)$ and $b=O(a).$
For $h \in \C $, we denote $h_1=\re h $ and $h_2=\im h $. 
Throughout this paper, $C$ denotes a large positive constant and $c$ is a small positive constant, that may change from line to line; both do not depend on parameters. We denote by $\left| \cdot \right|$ the Euclidean norm on $\R^3.$
For simplicity, we write $\Delta =\Delta_\Omega$. 
The real $L^2$-scalar product $(\cdot, \cdot )$ means
$$ 
(f,g)=\re{\int f \;  \overline{g}} = \int \re g \, \re f + \int \im g \,  \im f \; .  $$

\section{Preliminaries}
\label{Preliminaries}
\subsection{Properties of the ground state} We recall here some well-known properties of the ground state. We refer the reader to \cite{Weinstein82}, \cite{Kwong89} , \cite[Appendix B]{Tao06BO} for a general setting and \cite{HoRo08} for the $3d$ cubic \nnls case, for more details. Consider the following nonlinear elliptic equation on $\R^3$
\begin{equation}
\label {equQ}
-Q + \Delta Q + |Q|^2Q=0.
\end{equation}
We are interested in a positive, decaying at infinity, solution $Q \in H^1(\R^3).$ The ground state solution is the unique positive, radial, vanishing at infinity, 
smooth solution of \eqref{equQ}. It is also (up to standard transformations) the unique minimizer of the Gagliardo-Nirenberg inequality:  if $u \in H^1(\R^3),$ then
\begin{equation}
\label{Qgaglia}
\left\|  u \right\|_{L^4(\R^3)}^4 \leq C_{GN} \left\| \nabla u \right\|_{L^2(\R^3)}^3 \left\| u \right\|_{L^2(\R^3)}, \quad \left\|  Q \right\|_{L^4(\R^3)}^4 =C_{GN} \left\| \nabla Q \right\|_{L^2(\R^3)}^3 \left\| Q \right\|_{L^2(\R^3)}.
\end{equation}
Moreover,
\begin{multline}
\left\| u \right\|_{L^4(\R^3)}^4= C_{GN} \left\| \nabla u \right\|_{L^2(\R^3)}^3 \left\| u \right\|_{L^2(\R^3)} \\ \Longrightarrow \exists \lambda_0 \in \C ,\exists\mu_0\in \R, \exists x_0 \in \R^3: u(x)= \lambda_0  Q(\mu_0(x+x_0)).
\end{multline}
We also have the Pohozhaev identities

\begin{equation}
\left\| Q \right\|_{L^4(\R^3)}^4= 4 \left\| Q \right\|_{L^2(\R^3)}^2  \quad \text{ and } \quad  \left\| \nabla Q \right\|_{L^2(\R^3)}^2=3 \left\| Q \right\|_{L^2(\R^3)}^2. 
\end{equation}
As a consequence of \eqref{Qgaglia}, one has  
\begin{prop}
\label{uConvQ}
There exists a function $\varepsilon(\eta),$ defined for small $\eta >0,$  such that $\displaystyle \lim_{\eta \rightarrow 0} \varepsilon(\eta)=0 $ and 
\begin{multline}
\label{u-pro-Q}
\forall u \in H^1(\R^3), \quad \big| \left\| u \right\|_{L^4(\R^3)}- \left\| Q \right\|_{L^4(\R^3)} \big| + \left| \left\|  u  \right\|_{L^2(\R^3)}- \left\|  Q  \right\|_{L^2(\R^3)} \right| +\\   \left| \left\| \nabla u \right\|_{L^2(\R^3)}- \left\| \nabla Q \right\|_{L^2(\R^3)} \right| \leq \eta  
\Longrightarrow \exists \, \theta_0 \in \R \text{ and } \exists \,x_0 \in \R^3 :  \;  \left\| u-e^{i \theta_0} Q(\cdot-x_0) \right\|_{H^1(\R^3)} \leq \varepsilon(\eta).
\end{multline}
\end{prop}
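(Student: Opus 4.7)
I would prove Proposition \ref{uConvQ} by contradiction using a classical concentration-compactness argument. Suppose the statement fails; then there exist $\varepsilon_0>0$ and a sequence $(u_n) \subset H^1(\R^3)$ such that
$$
\|u_n\|_{L^4(\R^3)}\to\|Q\|_{L^4(\R^3)},\quad \|u_n\|_{L^2(\R^3)}\to\|Q\|_{L^2(\R^3)},\quad \|\nabla u_n\|_{L^2(\R^3)}\to\|\nabla Q\|_{L^2(\R^3)},
$$
but $\inf_{\theta_0,x_0}\|u_n-e^{i\theta_0}Q(\cdot-x_0)\|_{H^1(\R^3)}\geq \varepsilon_0$ for all $n$. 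In particular $(u_n)$ is bounded in $H^1(\R^3)$ and is an asymptotically extremizing sequence for the Gagliardo-Nirenberg inequality \eqref{Qgaglia}, since $\|u_n\|_{L^4}^4/(\|\nabla u_n\|_{L^2}^3\|u_n\|_{L^2})\to C_{GN}$.

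The plan is to apply the linear profile decomposition of G\'erard/Keraani on $\R^3$ (no scale parameter is needed since all relevant norms are of the same order): up to a subsequence,
$$
u_n(x)=\sum_{j=1}^{J} e^{i\theta_n^j}V^j(x-x_n^j)+r_n^J(x),
$$
with pairwise orthogonality $|x_n^j-x_n^k|\to\infty$ for $j\neq k$, asymptotic Pythagorean decompositions of $\|\cdot\|_{L^2}^2$ and $\|\nabla\cdot\|_{L^2}^2$, a Brezis--Lieb type decoupling of $\|\cdot\|_{L^4}^4$, and $\limsup_n\|r_n^J\|_{L^4}\to 0$ as $J\to\infty$. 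Applying the Gagliardo-Nirenberg inequality to each profile $V^j$ and using the elementary inequality $\sum_j a_j^3 b_j\leq \bigl(\sum_j a_j^2\bigr)^{3/2}\bigl(\sum_j b_j^2\bigr)^{1/2}$ (for $a_j,b_j\geq 0$), I obtain
$$
\|u_n\|_{L^4}^4\leq C_{GN}\sum_j\|\nabla V^j\|_{L^2}^3\|V^j\|_{L^2}+o_n(1)+o_J(1)\leq C_{GN}\|\nabla u_n\|_{L^2}^3\|u_n\|_{L^2}+o_n(1)+o_J(1).
$$
Since the leftmost and rightmost quantities have the same limit by assumption, every inequality is in fact an asymptotic equality.

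Equality in the elementary inequality above forces all but one of the $V^j$'s to vanish; equality in Gagliardo-Nirenberg for the surviving profile $V$ forces $V(x)=\lambda_0 Q(\mu_0(x+z_0))$ by the rigidity statement for \eqref{Qgaglia}; and matching $\|V\|_{L^2}=\|Q\|_{L^2}$ with $\|\nabla V\|_{L^2}=\|\nabla Q\|_{L^2}$ pins down $\mu_0=1$ and $|\lambda_0|=1$, so $V(x)=e^{i\theta_\ast}Q(x+z_0)$. The norm decouplings then force $\|r_n^J\|_{L^2}, \|\nabla r_n^J\|_{L^2}\to 0$ as $n,J\to\infty$, so along the subsequence $u_n - e^{i(\theta_n^1+\theta_\ast)}Q(\cdot - x_n^1+z_0)\to 0$ in $H^1(\R^3)$, contradicting the lower bound $\varepsilon_0$. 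The main technical point is the rigidity step, where one must simultaneously exploit equality in the decoupled Gagliardo-Nirenberg chain and equality in the Pythagorean identities; this is completely standard but is the only place where the uniqueness of the Gagliardo-Nirenberg minimizer, recalled just before the statement, is essentially used.
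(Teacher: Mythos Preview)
The paper does not actually prove Proposition~\ref{uConvQ}: it is stated without proof, introduced by the sentence ``As a consequence of \eqref{Qgaglia}, one has'', and treated as a well-known stability fact for the Gagliardo--Nirenberg minimizer (with references to Weinstein, Kwong, etc.). Your concentration-compactness argument is a correct and standard way to supply this proof, so there is nothing to compare on the level of strategy.

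Two small remarks on presentation. First, the phases $e^{i\theta_n^j}$ should not appear in the profile decomposition: the translation-only decomposition reads $u_n=\sum_j V^j(\cdot-x_n^j)+r_n^J$, and any phase is absorbed into $V^j$; the phase $\theta_\ast$ only emerges at the rigidity step when you identify the single surviving profile with $e^{i\theta_\ast}Q(\cdot+z_0)$. Second, your claim that ``no scale parameter is needed'' deserves one line of justification: since $\|u_n\|_{L^2}$ and $\|\nabla u_n\|_{L^2}$ both converge to nonzero finite limits, any profile with a nontrivial scaling core would force one of these norms to $0$ or $\infty$, which is excluded; equivalently, one may run Lions' concentration-compactness directly, ruling out vanishing by $\|u_n\|_{L^4}\to\|Q\|_{L^4}>0$ and dichotomy by the strict super-additivity implicit in your inequality $\sum_j a_j^3 b_j\leq(\sum_j a_j^2)^{3/2}(\sum_j b_j^2)^{1/2}$. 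With these cosmetic adjustments the argument is complete.
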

Next, we recall some known properties on the decay of $Q$, see  \cite{GiNiNi81}, \cite{BeLi83a} and \cite[Chapter 8] {Cazenave03BO}.

\begin{prop}[Exponential decay of $Q$] 
\label{decay1 Q}
Let $Q$ be the ground state solution of \eqref{equQ}, then there exist $a,C>0$ such that for $|x|>1$, 
$$ \left|  Q(x)-\frac{a}{|x|}e^{-|x|}\right|\leq C \frac{e^{-|x|}}{|x|^{3/2}}. $$
Moreover, 
$$ \left| \nabla  Q(x)  + \nabla ^2 Q(x)  \right| \leq  C\frac{ e^{-|x|}}{|x|} . $$
\end{prop}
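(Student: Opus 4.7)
The statement is the classical exponential decay of the ground state, due essentially to Gidas--Ni--Nirenberg and Berestycki--Lions. The plan is to exploit the integral representation via the Yukawa kernel together with an elliptic bootstrap.

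The equation $-Q + \Delta Q + Q^3 = 0$ can be rewritten as $(-\Delta + I) Q = Q^3$, so that
\begin{equation*}
 Q(x) = (G \ast Q^3)(x) = \int_{\R^3} \frac{e^{-|x-y|}}{4 \pi |x-y|} \, Q^3(y) \, dy,
\end{equation*}
where $G(x) = \frac{e^{-|x|}}{4\pi |x|}$ is the fundamental solution of $-\Delta + I$ on $\R^3$. First I would establish a preliminary exponential bound $Q(x) \leq C e^{-\alpha |x|}$ for some $\alpha \in (0,1)$. Since $Q \in H^1(\R^3)$ is smooth and solves a subcritical semilinear equation, standard elliptic regularity combined with Sobolev embedding gives $Q \in L^\infty$ with $Q(x) \to 0$ as $|x| \to \infty$. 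Thus $Q^2(x) \leq \varepsilon$ for $|x| \geq R$, and the equation reads $(-\Delta + (1-\varepsilon)) Q \leq 0$ outside a large ball. Comparing $Q$ with the explicit radial barrier $v(x) = C e^{-\sqrt{1-\varepsilon}|x|}/|x|$ via the maximum principle yields the preliminary decay.

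Second, I would bootstrap this crude decay up to the sharp rate using the convolution formula. Given $Q^3(y) \leq C e^{-3\alpha|y|}$, splitting the integral into $\{|y| \leq |x|/2\}$ and $\{|y| > |x|/2\}$ and estimating each piece elementarily already gives $Q(x) \leq C e^{-|x|}/|x|$. To extract the leading constant, I Taylor expand the phase:
\begin{equation*}
|x-y| = |x| - \hat{x} \cdot y + O\!\left( |y|^2 / |x| \right), \qquad \frac{1}{|x-y|} = \frac{1}{|x|}\bigl(1 + O(|y|/|x|)\bigr),
\end{equation*}
valid for $|y| \leq |x|/2$. Integrability of $e^{\hat{x}\cdot y} Q^3(y)$ is ensured by the bound $Q(y) \leq Ce^{-|y|}/|y|$, and by radiality of $Q$ the integral $\int_{\R^3} e^{\hat{x}\cdot y} Q^3(y)\,dy$ is independent of the direction $\hat{x}$, equal to some positive constant $4\pi a$. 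Thus
\begin{equation*}
Q(x) = \frac{a}{|x|} e^{-|x|} + \mathcal{R}(x),
\end{equation*}
and a careful accounting for the Taylor error and the contribution of $\{|y|>|x|/2\}$ (which is $O(e^{-|x|-c|x|})$, much smaller than required) yields $|\mathcal{R}(x)| \leq C e^{-|x|}/|x|^{3/2}$.

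Third, the derivative estimates follow from interior elliptic regularity. Applying Schauder bounds to $-\Delta Q + Q = Q^3$ on the unit ball $B(x,1)$ for $|x|$ large gives
\begin{equation*}
\left\| \nabla Q \right\|_{L^\infty(B(x,1/2))} + \left\| \nabla^2 Q \right\|_{L^\infty(B(x,1/2))} \leq C \bigl( \left\| Q \right\|_{L^\infty(B(x,1))} + \left\| Q^3 \right\|_{L^\infty(B(x,1))} \bigr),
\end{equation*}
and the pointwise decay proven above upgrades this into $\left| \nabla Q(x) \right| + \left| \nabla^2 Q(x) \right| \leq C e^{-|x|}/|x|$. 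The main technical obstacle is carefully tracking error terms in the phase expansion of $|x-y|$ within the convolution to land on the specific $|x|^{-3/2}$ remainder; the remaining ingredients are a straightforward bootstrap and classical interior regularity, hence the authors reasonably defer to the cited literature.
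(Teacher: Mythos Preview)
Your proposal is correct and, in fact, more detailed than what appears in the paper: the paper does not prove this proposition at all but simply states it as a known result, citing Gidas--Ni--Nirenberg \cite{GiNiNi81}, Berestycki--Lions \cite{BeLi83a}, and Cazenave's book \cite[Chapter~8]{Cazenave03BO}. Your sketch via the Yukawa representation, maximum-principle bootstrap, and phase expansion is one of the standard routes to this asymptotic (the ODE approach for the radial profile $v(r)=rQ(r)$ solving $v''-v=-v^3/r^2$ is another, arguably shorter, option), and your use of interior Schauder estimates for the derivative bounds is exactly right. You correctly anticipated that the authors defer to the literature.
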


\begin{lem}
\label{compac-supp-argu}
Let $Q$ be the ground state solution of \eqref{equQ}, $M>0$ large, $ X \in \R^3$ and let $g $ be an $L^1$-function. Then for $k>0,$ we have
\begin{equation}
\label{eq-compac-supp-argu}
\left| X \right| \geq 2M  \;  \Longrightarrow \; 
\displaystyle  \int_{|x| \leq M }  \left( Q^k(x-X) +    \left| \nabla Q(x-X)  \right|^k \right)  g(x) \, dx  = O\left(  \frac{ e^{-k |X|}}{\left|X\right|^k} \right) ,   
\end{equation}
where $O(\cdot)$ depends on $k, \; g$ and $M.$  \\

Furthermore, there exists $c_M >0$ such that
\begin{equation}
\label{eq-compac-supp-argu-2}
\displaystyle  \int_{|x| \leq M }   Q^k(x-X)   \, dx   \geq c_M \,  \frac{ e^{-k |X|}}{\left|X\right|^k} .
\end{equation}
\end{lem}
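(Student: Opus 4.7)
The plan is to combine the pointwise exponential-decay estimates for $Q$ and $\nabla Q$ from Proposition \ref{decay1 Q} with elementary triangle-inequality bounds for $|x-X|$. No functional-analytic machinery or compactness arguments are required; the lemma is essentially a direct calculation.

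For the upper bound \eqref{eq-compac-supp-argu}, the key observation is that on the integration domain $\{|x|\leq M\}$ with $|X|\geq 2M$ the triangle inequality gives $|X|-M\leq |x-X|\leq |X|+M$, and in particular $|x-X|\geq |X|/2\geq M$, so we are safely in the regime where Proposition \ref{decay1 Q} applies (recall that $M$ is large). Invoking that proposition yields the uniform pointwise bound
\[
Q^k(x-X)+|\nabla Q(x-X)|^k \;\leq\; \frac{C^k\, e^{-k(|X|-M)}}{(|X|/2)^k} \;\leq\; C'\,\frac{e^{-k|X|}}{|X|^k},
\]
with $C'$ depending only on $k$ and $M$. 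Multiplying by $|g(x)|$ and integrating over $\{|x|\leq M\}$, using $g\in L^1$, completes the first claim.

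For the lower bound \eqref{eq-compac-supp-argu-2}, the strategy is to restrict the integral to a strategically placed small ball on which $|x-X|$ stays within a bounded additive distance of order $M$ from $|X|$. Explicitly, let $B$ be the Euclidean ball of radius $M/4$ centered at the point $\frac{M}{2}\frac{X}{|X|}$ on the segment from the origin to $X$. Then $B\subset\{|x|\leq 3M/4\}\subset\{|x|\leq M\}$, and a direct triangle-inequality computation gives $|X|-\tfrac{3M}{4}\leq |x-X|\leq |X|-\tfrac{M}{4}$ for every $x\in B$. The refined asymptotic $Q(y)=a\,e^{-|y|}/|y|+O(e^{-|y|}/|y|^{3/2})$ from Proposition \ref{decay1 Q} then implies $Q(y)\geq \tfrac{a}{2}\,e^{-|y|}/|y|$ once $|y|$ exceeds a fixed threshold, which holds on $B$ since $|x-X|\geq 5M/4$ and $M$ is large. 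Combining this with the two-sided bound on $|x-X|$ produces a pointwise estimate of the form $Q^k(x-X)\geq c_M\, e^{-k|X|}/|X|^k$ on $B$, and integrating over $B$, whose Lebesgue measure depends only on $M$, delivers \eqref{eq-compac-supp-argu-2}.

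I do not anticipate any substantive obstacle. The only subtle points are (i) positioning $B$ so that it lies inside $\{|x|\leq M\}$ while $|x-X|$ remains equal to $|X|$ up to an $O(M)$ additive correction — this is what forces the ball to be centered on the segment from $0$ to $X$ rather than at the origin — and (ii) ensuring that on $B$ the distance $|x-X|$ is large enough to activate the refined lower bound in Proposition \ref{decay1 Q}, which is guaranteed by taking $M$ sufficiently large in accordance with the hypothesis.
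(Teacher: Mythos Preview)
Your proof is correct and follows essentially the same approach as the paper: both arguments use the triangle inequality to bound $|x-X|$ in terms of $|X|$ on $\{|x|\leq M\}$ and then invoke the pointwise decay estimates of Proposition~\ref{decay1 Q}. The paper in fact omits the proof of the lower bound \eqref{eq-compac-supp-argu-2} entirely, so your detailed argument there is a welcome addition; note only that your insistence on centering $B$ off the origin is not actually necessary---any fixed ball inside $\{|x|\leq M\}$ would do, since $|x-X|\leq |X|+M$ already holds throughout the integration domain.
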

\begin{proof}
First, note that
$$ \frac 12 \left| X \right| < \left| X \right| -M < \left| x-X\right|, \; \text{ and} \;    \left| X \right| \geq 2M. $$
This implies that, for $\left| X \right| \geq 2M $ we have 
$$  e^{- \left| x-X \right| } \leq e^{M} e^{- \left| X \right| } \quad  \text {and } \quad \frac{1}{2 \left| x-X\right|} \leq \frac{1}{ \left|X \right|}.$$
Using the exponential decay of $Q$ from Proposition \ref{decay1 Q}, we obtain, 
$$ \int_{|x| \leq M } Q^k(x-X)  g(x) \, dx  = O\left(  \frac{ e^{-k |X|}}{\left| X \right|^k } \right), \quad \text{ for }   \;  k>0.  $$ 
Similarly, we get  $$ \int_{|x| \leq M }\left| \nabla Q(x-X)  \right|^k g(x) \, dx  = O\left(  \frac{ e^{-k|X|}}{\left|X\right|^k} \right),  \quad \text{ for }   \;  k>0. $$
The proof of \eqref{eq-compac-supp-argu-2} is similar by applying again Proposition \ref{decay1 Q} and we omit it. 
\end{proof}
Let $u \in H^1_0(\Omega)$ and define  $\uu \in H^1(\R^3) $ such that 
\begin{equation}
\label{defunderlineU}
\underline{u}(x)=\begin{cases}
u(x) \; \;  \forall x \in \; \Omega,  \\
0 \quad \quad  \forall x \in   \; \Omega^c. 
\end{cases}
\end{equation}
\begin{rem} 

We denote by $M_{\R^3}[\uu]=\left\| \uu \right\|^2_{L^2(\R^3)}$ and $E_{\R^3}[\uu]=\frac 12 \left\| \nabla \uu \right\|_{L^2(\R^3)}^2- \frac{1}{p+1} \left\| \uu \right\|_{L^{p+1}(\R^3)}^{p+1}.$ Note that, we have  $M_{\Omega}[u]=M_{\R^3}[\uu]$ and $E_{\Omega}[u]=E_{\R^3}[\uu].$  To simplify notations in what follows we drop the index $\Omega$ in the mass and the energy of the \NNls equation, so that we just write $M[u]$ and $E[u]$ instead of $M_{\Omega}[u]$ and $E_{\Omega}[u].$\\ 
\end{rem}

Assume that  $\underline{u}$ satisfies the left-hand side of \eqref{u-pro-Q}.
Then there exists $x_0 \in \R^3$ and $ \theta_0 \in \R$ such that   $$\left\| \uu-e^{i \theta_0} Q(\cdot-x_0) \right\|_{H^1 (\R^3)} \leq \varepsilon(\eta),$$ \\
which yields, by Proposition \ref{decay1 Q} and \eqref{eq-compac-supp-argu-2},
\begin{equation}
\label{x0large}
 \frac 1C \, \frac{e^{-|x_0|}}{|x_0|}  \leq \left\| Q(x-x_0) \right\|_{H^1(\Omega^c)} \leq \varepsilon(\eta) .
\end{equation}
This implies that $|x_0|$ is large when $\eta $ is small. 
\subsection{Coercivity  property}
We next recall some known properties of the linearized operator on $\R^3$. Consider a solution $u$ of \nnls close to $e^{it}Q$ and  write $u(t)$ as 
 $$ u(t,x)=e^{it}\left(Q(x)+\hbar(t,x)\right).$$
Note that $\hbar$ is the solution of the equation $$\partial_t \hbar + \mathcal{L}\hbar=\mathcal{R}(\hbar), \qquad   \mathcal{L} \hbar=-\mathcal{L}_{-} \hbar_2+i \mathcal{L}_{+} \hbar_1,$$ 
where   \begin{align*}
  \mathcal{L}_{+} \hbar_1 &=-\Delta \hbar_1+  \hbar_1-3 Q^2 \hbar_1,\\   \mathcal{L}_{-} \hbar_1&=-\Delta \hbar_2+ \hbar_2-Q^2 \hbar_2 ,\\   \mathcal{R}(\hbar)&= iQ (2|\hbar|^2+\hbar^2)+i|\hbar|^2\hbar.
\end{align*}  
Define $\Phi(\hbar)$, a linearized energy on $\R^3$, by 
\begin{equation}
\label{phi_R^3}
\Phi(\hbar):= \frac 12 \intr | \hbar|^2 +  \intr \frac 12 |\nabla \hbar |^2 - \frac 12 \intr Q^2 (3 \hbar_1^2+\hbar_2^2).
\end{equation}

We next define a subspace of  $H^1(\R^3)$, on which $\Phi$ is positive 
\begin{equation*}
\mathcal{G} := \bigg\{ \hbar \in H^1(\R^3) \;  \vert  \intr \partial_{x_j} Q \hbar_1 = 0, \intr Q \hbar_2 =0 , \, j=1,2,3   \bigg\}.
\end{equation*}
Then by \cite{DuRo10}, there exists $c>0$ such that 
\begin{equation}
\label{Coervicity-R}
\forall \hbar \in  \mathcal{G}, \quad \Phi(\hbar) \geq c \left\| \hbar \right\|_{H^1(\R^3)}^2.
\end{equation}
Let $h \in H^1(\R^3).$ Define \begin{equation}
\label{Quad-form-Phi}
\Phi_{X}(h):= \frac 12 \intr | \nabla h |^2 - \frac 12 \intr Q^2 \Psi^2(\cdot+X) (3 h_1^2+h_2^2)+ \frac{1}{2} \intr |h|^2,
\end{equation}
where $\Psi$ is defined in \eqref{def-psi}.
\begin{lem}
\label{lem-coercivity-Psi}
There exist $c>0$ such that for all $h \in H^1(\R^3)$ the following orthogonality relations hold for all $X \in \R^3$ with $|X|$ large
\begin{align}
\re \intr \Delta ( Q(x)& \Psi(x+X)) h(x+X) \, dx =0, \quad   \im \intr Q(x) \Psi(x+X)  h(x+X)\,  dx= 0,  \\
  & \re \intr \partial_{x_k}(Q(x) \Psi(x+X)) h(x+X) \,dx =0, \quad k=1,2,3.  
\end{align}
Then  \begin{equation} 
\label{coercivity-Psi} 
\Phi_X(h(\cdot+X)) \geq c \left\|h\right\|_{H^1(\R^3)}^2  .
\end{equation}
\end{lem}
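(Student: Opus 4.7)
The plan is to reduce the inequality to the Euclidean coercivity \eqref{Coervicity-R}. Set $\hbar(z):=h(z+X)$, so that $\|\hbar\|_{H^1(\R^3)}=\|h\|_{H^1(\R^3)}$. A change of variables $y=x+X$ in the definition of $\Phi_X$ produces
\begin{equation*}
\Phi_X(h(\cdot+X)) \;=\; \Phi(\hbar) \;+\; \frac12 \int_{\R^3} Q^2(z)\,\bigl(1-\Psi^2(z+X)\bigr)\,\bigl(3\hbar_1^2(z)+\hbar_2^2(z)\bigr)\,dz.
\end{equation*}
The cutoff $1-\Psi^2$ is supported in a compact region near $\Theta$, so the integrand is supported in $\{|z+X|\le M\}$, which forces $|z|\ge |X|/2$ once $|X|$ is large. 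Proposition~\ref{decay1 Q} then bounds the error term by $O\!\bigl(e^{-2|X|}/|X|^{2}\bigr)\|\hbar\|_{L^2}^2$, which is negligible.

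The next step is to transfer the three given orthogonality relations to orthogonality conditions against $\partial_{x_k}Q$, $Q$ (and $\Delta Q$) at the origin, as appearing in the definition of $\mathcal{G}$. Changing variable $z=x$ in the stated orthogonalities and expanding each derivative of $Q(z)\Psi(z+X)$ via Leibniz, the factors $1-\Psi(\cdot+X)$ and $\nabla\Psi(\cdot+X)$ are compactly supported near $\Theta$, so on their support $|z|\ge |X|/2$ and Lemma~\ref{compac-supp-argu} gives
\begin{equation*}
\left|\int_{\R^3}\partial_{x_k}Q(z)\,\hbar_1(z)\,dz\right| \;+\; \left|\int_{\R^3}Q(z)\,\hbar_2(z)\,dz\right| \;\le\; C\,\frac{e^{-|X|}}{|X|}\,\|\hbar\|_{L^2(\R^3)}
\end{equation*}
for $k=1,2,3$; an identical bookkeeping handles the $\Delta(Q\Psi)$ orthogonality if \eqref{Coervicity-R} needs it.

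Finally, since $\{\partial_{x_j}Q\}_{j=1,2,3}$ and $Q$ are mutually $L^2$-orthogonal, I would decompose $\hbar=\hbar_\perp+\sum_{j=1}^3\alpha_j\,\partial_{x_j}Q+i\beta\,Q$ with $\hbar_\perp\in\mathcal{G}$; the previous estimate yields $|\alpha_j|+|\beta|\le C\,e^{-|X|}/|X|\cdot \|\hbar\|_{L^2}$. Applying \eqref{Coervicity-R} to $\hbar_\perp$ and expanding the quadratic form $\Phi$ by bilinearity gives
\begin{equation*}
\Phi(\hbar) \;\ge\; c\,\|\hbar_\perp\|_{H^1}^2 - C\,\frac{e^{-|X|}}{|X|}\,\|\hbar\|_{H^1}^2 \;\ge\; \tfrac{c}{2}\,\|\hbar\|_{H^1}^2
\end{equation*}
for $|X|$ sufficiently large (in particular $\|\hbar_\perp\|_{H^1}\ge \tfrac12\|\hbar\|_{H^1}$). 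Combined with the first step, this yields \eqref{coercivity-Psi}. The most delicate point, and the one most likely to be the main obstacle, is the orthogonality transfer: one must carefully track which derivatives land on $Q(\cdot-X)$ versus $\Psi(\cdot)$ and verify that every resulting remainder is supported where $Q(\cdot-X)$ or $\nabla Q(\cdot-X)$ is exponentially small in $|X|$.
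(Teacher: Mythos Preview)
Your proposal is correct and follows essentially the same approach as the paper: transfer the $\Psi$-perturbed orthogonality conditions to the standard ones against $\partial_{x_k}Q$ and $iQ$ using the exponential decay of $Q$ on the compact support of $1-\Psi(\cdot+X)$ and $\nabla\Psi(\cdot+X)$, then project off the resulting small defect and invoke the Euclidean coercivity \eqref{Coervicity-R}. The only cosmetic differences are that the paper projects onto the five-dimensional span $\mathcal{B}=\mathrm{span}\{iQ,\Delta Q,\partial_{x_1}Q,\partial_{x_2}Q,\partial_{x_3}Q\}$ and works directly with $\Phi_X$, whereas you first reduce $\Phi_X$ to $\Phi$ and then project using only the four directions that define $\mathcal{G}$; since $\mathcal{A}\subset\mathcal{G}$, both routes land on \eqref{Coervicity-R} in the same way.
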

\begin{proof}
Define \begin{align*}
 \mathcal{A} &=   \bigg\{ f \in H^1(\R^3): \;  \re \intr \Delta Q \, f=\im \intr Q f=\re \intr \partial_{x_k}Q \, f =0 , \; k=1,2,3\bigg\},  \\
 \mathcal{B} &=   \mathrm{span} \, \{ iQ,\Delta Q  ,\partial_{x_1}Q,\partial_{x_2}Q,\partial_{x_3}Q  \},
 \end{align*}
then we write $h(\cdot+X)=\tilde{h}(\cdot+X)+r(\cdot+X)$ with $\tilde{h}(\cdot+X) \in \mathcal{A}$ and $r(\cdot+X) \in  \mathcal{B}.$ \\

By \eqref{phi_R^3} and \eqref{Coervicity-R}, we have $$ \Phi(\tilde{h}(\cdot+X)) \geq c \| \tilde{h} \|^2_{H^1(\R^3)}.$$
Since  $r(\cdot+X) \in \mathcal{B},$ we can write $r$ as $$ r(\cdot+X)= \sum_{k=1}^3  \alpha_k \partial_{x_k}Q + \beta i Q + \gamma \Delta Q . $$
Taking the real $L^2$-scalar product in $\R^3$ of $r$ with $iQ$ and using the fact that $Q$ is radial, we get
 \begin{align*} 
 \beta&=\frac{1}{\left\| Q \right\|_{L^2 (\R^3)}^2}(r(\cdot+X),iQ)=\frac{1}{\left\| Q \right\|_{L^2(\R^3)}^2}((h(\cdot+X)-\tilde{h}(\cdot+X), iQ) \\
 & = \frac{1}{\left\| Q \right\|_{L^2 (\R^3)}^2} \left( \im \intr h(x+X) Q(x) \, dx-\im \intr \tilde{h}(x+X) Q(x)\, dx \right).
 \end{align*}
By the definition of $\tilde{h}, $ we have $\im \int \tilde{h}(x+X) Q(x) dx=0.$ Using the orthogonality conditions in Lemma \ref{lem-coercivity-Psi} and the exponential decay of $Q$ from Lemma \ref{compac-supp-argu}, we obtain
\begin{align*}
 \beta&= \frac{1}{\left\| Q \right\|_{L^2 (\R^3)}^2}   \im \intr h(x+X) Q(x) dx\\ &= \frac{1}{\left\| Q \right\|_{L^2}^2}  \im \intr h(x+X) Q(x) \Psi(x+X) \, dx \\&-  \frac{1}{\left\| Q \right\|_{L^2 (\R^3) }^2}  \im \intr h(x+X) Q(x) (\Psi(x+X)-1) dx
\\&  =O(e^{-|X|} \left\| h \right\|_{H^1 (\R^3)}).
\end{align*}

 Similarly, by taking the scalar product of $r$   with $\Delta Q$ and $ \partial_{x_k} Q$ and using the fact that $Q$ is radial, the orthogonality condition in Lemma \ref{lem-coercivity-Psi} and the exponential decay of $Q$ from Lemma \ref{compac-supp-argu},  we obtain $\gamma=\alpha_k=O(e^{-|X|} \left\| h \right\|_{H^1 (\R^3) }).$ \\
 
Thus,  \begin{align*}
  \left\| r \right\|_{H^1  (\R^3) } &\leq C e^{-|X|} \left\| h \right\|_{H^1 (\R^3)} , \\ 
    \left| \Phi_{X}(r(\cdot+X)) \right| &\leq e^{-2|X|} \left\| h \right\|_{H^1 (\R^3)}^2   .
 \end{align*}
 We now have
 \begin{align*}
            \Phi_{X}(h(\cdot+X))=\Phi_{X}(\tilde{h}(\cdot+X))+\Phi_{X}(r(\cdot+X))+2 B_{X}(\tilde{h}(\cdot+X), r (\cdot+X)),
 \end{align*}
where the bilinear form $B_{X}$ is defined as 
\begin{align*} B_{X}(f, g ) &:=  \frac{1}{2} \int  \bigg ( \nabla f_1(x) \nabla g_1(x) + f_1(x) \, g_1(x) - 3Q^2(x) \Psi^2(x+X) f_1(x) \;g_1(x) \bigg )\, dx  \\
&+ \frac 12 \int \bigg( \nabla f_2(x) \nabla g_2(x) +f_2(x) g_2(x) -Q^2(x) \Psi^2(x+X) f_2(x) ) \; g_2(x) \bigg) \, dx .
\end{align*}
 Note that 
 $$\left| B_{X}(\tilde{h}(\cdot+X), r(\cdot+X) )\right| \leq e^{-|X|} \left\| h \right\|_{H^1 (\R^3)}  .$$
 Then, 
 \begin{align*}
  \Phi_{X}(h(\cdot+X))= \Phi(\tilde{h}(\cdot+X)) + O\left(  e^{-   \left|X\right| } \left\| h \right\|_{H^1 (\R^3)} \right)  \geq c \left\| h \right\|_{H^1 (\R^3)}^2.
 \end{align*}
 This implies that there exists $c,R>0$ such that for $|X|>R$
\begin{equation*}
\Phi_{X}(h(\cdot+X))  \geq  c \left\| h \right\|_{H^1 (\R^3)}^2.
\end{equation*}
 \end{proof}

\subsection{Cauchy theory and profile decomposition} 
Next, we review tools needed in Section~\ref{Compactness properties} to prove the compactness property, up to space translation, of a critical solution of the \NNls equation, using a profile decomposition. We use the same notations as in \cite{KiVisnaZhang16}.
Without loss of generality, we assume that $ 0 \in \Theta =\Omega^c$ and $\Theta \subset B(0,1).$        
 We define  $\chi $ to be a smooth cutoff function in $\R^3$
 \begin{align*}
\chi(x)=\left\{\begin{array}{cc} &1 \qquad \left|x\right| \leq \frac{1}{4}, \\ \\
&0 \qquad |x| >  \frac{1}{2}.  \end{array}\right.
 \end{align*}
We define spaces $S^k(I), \; k=0,1,$ as follows \\ 
 \begin{align*}
S^0(I)&=L^\infty_t L^2_x(I \times \Omega) \cap L^{\frac{5}{2}}_t L^{\frac{30}{7}}_x   (I \times \Omega),  \\ 
S^1(I)&=\{ u:I \times \Omega \longrightarrow \C \, | \,  \;  u \mbox{ and } (-\Delta_\Omega)^{\frac{1}{2}}u \in S^0(I) \}.
\end{align*}
\begin{rem}
In order to avoid the endpoints in Strichartz estimates for an exterior domain, see Theorem \ref{Strichartz} below, we take a specific pair $(\frac 52, \frac{30}{7})$, for simplicity.
However, one could use another pair $(p,q)$ with $p=2+\varepsilon$ and $q=\frac{6(2+\varepsilon)}{2+3 \varepsilon}$ instead of $(\frac 52, \frac{30}{7}),$ where $\varepsilon>0$ is small enough. 
\end{rem}
By interpolation, 
$$ \left\| u \right\|_{L^{q}_t L^r_{x}(I \times \Omega)} \leq \left\| u \right\|_{S^0(I)} , \quad \text{ for all } \frac 2q +\frac 3r=\frac 32 \text{ with } \frac 52 \leq q \leq \infty. $$
Similar estimates hold for $S^1(I).$ We will, in particular, use $(q,r)$ equal to $(5,\frac{30}{11})$ and $(\infty,2).$ \\ 

One particular Strichartz space we use is 

$$ X^1(I):=L^5_t H^{1,\frac{30}{11}}_0(I\times \Omega) .   $$

Note that, $S^1(I) \subset X^1(I)$ and by Sobolev embedding, there exists $C>0$ such that $  \left\| f \right\|_{L^5_{t,x}   \left(I \times \Omega \right) } \leq C \left\| f \right\|_{X^1(I)}.$  \\ 

We next define $N^0(I)$ as the corresponding dual of $S^0(I)$ and 
\begin{equation}  
\label{def-N1}
N^1(I)=\{u:I \times \Omega \longrightarrow \C \,  |\,  \; u \mbox{ and }(-\Delta_\Omega)^{\frac{1}{2}}u \in N^0(I) \} . 
\end{equation}
Then, we have
\begin{equation}
\label{N0-estim}
\left\| u \right\|_{N^0(I)} \leq \left\| u \right\|_{L^{q^{'}}_t L^{ r^{'}  }_x (I \times \Omega )}   \qquad  \text{ for all }  \quad \frac 2q +\frac 3r=\frac 32 \text{ with } \; \frac 52 \leq q \leq \infty,    
\end{equation}
\begin{equation*}
\text{where}  \quad \frac 1q+\frac{1}{q^{'}}=1 \quad  \text{ and } \quad  \frac 1r+\frac{1}{r^{'}}=1 .
\end{equation*}

In particular, we will use $(q{'},r^{'})=(\frac 53, \frac{30}{23}),$ the H\"older dual to the Strichartz pair $(q,r)=(\frac 52, \frac{30}{7}). $ One can get a similar estimate to \eqref{N0-estim} for $N^1(I)$ using the same pair, see Theorem~\ref{Strichartz}. \\

Next, we state the Strichartz estimates using the above pairs and other necessary results from \cite{KiVisnaZhang16}.
\begin{theo}[Strichartz estimates, \cite{Ivanovici10}]
\label{Strichartz}
Let $I$ be a time interval and $t_0 \in I.$ Let $u_0 \in H^1_0(\Omega),$ then there exists a constant $C>0$ such that the solution $u(t,x)$ to the nonlinear Schr\"odinger equation on $ \R \times \Omega$ with Dirichlet boundary condition
 \begin{align*}
\begin{cases}
 i \partial_{t} u + \Delta_{\Omega} u = f \; \text{ on } \R \times \Omega\\
 u(0,x)=u_0(x)  \\
 u_{ \mid \partial \Omega}=0
 \end{cases}
\end{align*}
satisfies 
\begin{equation*}
\left\| u \right\|_{ S^{0}(I) } \leq C \left(  \left\| u_0 \right\|_{L^2(\Omega)}   + \left\| f \right\|_{N^0(I)} \right), 
\end{equation*}
and 
\begin{equation}
\label{strich-S1}
\left\| u \right\|_{S^1(I)}       \leq C \left(  \left\| u_0 \right\|_{H^1_0(\Omega)}   + \left\| f \right\|_{N^1(I)} \right) .
\end{equation}
In particular, 
$$ \left\| u \right\|_{ X^1 (I \times \Omega)}      \leq C \left(   \left\| u_0 \right\|_{H^1_0 ( \Omega ) }    +
 \left\| f \right\|_{  L^{\frac{5}{3}} H^{1 ,\frac{30}{23}}_{0}  (I \times \Omega ) } \right) . $$
\end{theo}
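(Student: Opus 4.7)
The plan is to bootstrap everything from the single genuinely difficult ingredient on an exterior domain, namely the short-time dispersive bound
\[
\bigl\|e^{it\Delta_\Omega}\bigr\|_{L^1(\Omega)\to L^\infty(\Omega)}\lesssim |t|^{-3/2},
\]
valid outside a strictly convex obstacle and established in \cite{Ivanovici10} by a Melrose--Taylor-type parametrix construction that resolves the grazing/glancing regime where reflected and diffracted rays accumulate. I would treat this bound as a black box.

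First, I would combine this dispersive bound with the trivial $L^2$ isometry $\|e^{it\Delta_\Omega}u_0\|_{L^2}=\|u_0\|_{L^2}$ and feed the pair into the abstract Keel--Tao machine (off the forbidden $L^2_tL^6_x$ endpoint) to obtain, for every admissible pair $(q,r)$ with $\frac{2}{q}+\frac{3}{r}=\frac{3}{2}$ and $q>2$, the homogeneous bound $\|e^{it\Delta_\Omega}u_0\|_{L^q_tL^r_x}\lesssim\|u_0\|_{L^2}$ and, via the Christ--Kiselev lemma, the inhomogeneous bound
\[
\Bigl\|\int_{t_0}^t e^{i(t-s)\Delta_\Omega}f(s)\,ds\Bigr\|_{L^q_tL^r_x(I\times\Omega)}\lesssim \|f\|_{L^{\tilde q'}_tL^{\tilde r'}_x(I\times\Omega)}
\]
for any second admissible pair $(\tilde q,\tilde r)$. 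Taking the supremum over the two pairs $(\infty,2)$ and $(\tfrac{5}{2},\tfrac{30}{7})$ that define $S^0(I)$ and applying Duhamel's formula to $u$ then gives $\|u\|_{S^0(I)}\lesssim \|u_0\|_{L^2}+\|f\|_{N^0(I)}$, since $N^0(I)$ is by definition the intersection-of-dual-Strichartz-spaces norm.

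Second, to upgrade to $H^1_0$ I would commute the spectrally defined Dirichlet fractional Laplacian $(-\Delta_\Omega)^{1/2}$ through $e^{it\Delta_\Omega}$; they commute by functional calculus and the Dirichlet boundary condition is preserved at every time. Using the norm equivalence $\|v\|_{H^1_0(\Omega)}\approx\|v\|_{L^2}+\|(-\Delta_\Omega)^{1/2}v\|_{L^2}$, applying the $S^0$ estimate to both $u$ and $(-\Delta_\Omega)^{1/2}u$ and unwinding the definition \eqref{def-N1} of $N^1$ produces \eqref{strich-S1}. For the final $X^1$ inequality, I would check the admissibility of the special pair: $\tfrac{2}{5}+\tfrac{3}{30/11}=\tfrac{2}{5}+\tfrac{11}{10}=\tfrac{3}{2}$ and $q=5\geq \tfrac{5}{2}$, so $L^5_tH^{1,30/11}_0(I\times\Omega)\hookrightarrow S^1(I)$ isometrically up to a constant, and dually the source space $L^{5/3}_tH^{1,30/23}_0$ sits inside $N^1(I)$. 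Both inclusions together reduce the $X^1$ bound to \eqref{strich-S1}.

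The hard part will be entirely concentrated in the dispersive estimate: on $\R^3$ the free propagator has kernel $(4\pi it)^{-3/2}e^{i|x-y|^2/4t}$ and the $|t|^{-3/2}$ decay is immediate, but on $\Omega$ the parametrix near glancing directions is subtle and in fact is what forces one to avoid the endpoint and work instead with the slightly off-endpoint pair $(\tfrac{5}{2},\tfrac{30}{7})$. Once this is imported from \cite{Ivanovici10}, the $TT^*$ abstraction, Christ--Kiselev, and the spectral calculus for $(-\Delta_\Omega)^{1/2}$ (whose boundedness on the relevant $L^r$ spaces is provided by \cite{killip2015riesz,KiVisnaZhang16}) are routine and do all the remaining work.
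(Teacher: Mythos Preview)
The paper does not give a proof of this theorem: it is stated with the attribution \cite{Ivanovici10} and used as a black box, so there is nothing in the paper to compare your argument against.

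Your sketch is the standard route and is essentially correct, with two caveats. First, what \cite{Ivanovici10} actually proves is the Strichartz estimate itself, obtained through a frequency-localized Melrose--Taylor parametrix analysis near the glancing region; a global $L^1(\Omega)\to L^\infty(\Omega)$ dispersive bound with the full $|t|^{-3/2}$ decay is a strictly stronger statement and is not what is established there. So the black box you want to import is already the $S^0$ Strichartz inequality, not a dispersive bound to be fed into Keel--Tao. Second, in your last paragraph you have one inclusion reversed: the paper records $S^1(I)\subset X^1(I)$ (since $(5,\tfrac{30}{11})$ lies in the interpolation range $\tfrac52\le q\le\infty$), not $X^1\hookrightarrow S^1$. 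Your dual inclusion $L^{5/3}_tH^{1,30/23}_0\subset N^1(I)$ is correct. With the inclusions in the right direction the $X^1$ bound follows immediately from \eqref{strich-S1}:
\[
\|u\|_{X^1}\le C\|u\|_{S^1}\le C\bigl(\|u_0\|_{H^1_0}+\|f\|_{N^1}\bigr)\le C\bigl(\|u_0\|_{H^1_0}+\|f\|_{L^{5/3}_tH^{1,30/23}_0}\bigr).
\]
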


\begin{prop}[Local smoothing, {\cite[Corollary 2.14]{KiVisZha16}} ]
\label{local smoothing}
Given $\omega_0 \in H^1_0(\Omega)$, we have \\ 
\begin{equation*}
 \left\| \nabla e^{it\Delta_\Omega} \omega_0 \right\|_{L^{\frac{5}{2}}L^{\frac{30}{17}} ( |t-\tau| \leq T , \, |x-z|\leq R )} \leq R^{\frac{31}{60}} T^{\frac{1}{5}} \left\| e^{it \Delta_\Omega} \omega_0\right\|_{L^5_{t,x}(R\times \Omega)}^{\frac{1}{6}} \left\| \omega_0 \right\|_{H^1_0(\Omega)}^{\frac{5}{6}},
 \end{equation*}
uniformly in $\omega_0$ and the parameters $R,T>0$, $z\in \R^3$ and $\tau \in \R$.
\end{prop}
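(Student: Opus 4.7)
The statement is the refined local-smoothing estimate of Killip--Visan--Zhang [KVZ16, Corollary 2.14], and I would prove it by interpolating the Strichartz bound of Theorem~\ref{Strichartz} against a Kato-type local-smoothing bound for the Dirichlet Laplacian outside the obstacle. The overall structure of the proof is an exchange between space/time integrability (via H\"older on $\{|t-\tau|\leq T\}\times\{|x-z|\leq R\}$) and regularity (via frequency localization and Bernstein), optimized in a frequency cutoff $N$.

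\textbf{Steps.} First, apply H\"older in time on the interval $|t-\tau|\le T$: since $L^{5/2}_t\big([\tau-T,\tau+T]\big)\hookrightarrow T^{1/5}\,L^5_t$, one reduces to an estimate for $\|\nabla e^{it\Delta_\Omega}\omega_0\|_{L^5_t L^{30/17}_x(\R\times B(z,R))}$, which already accounts for the $T^{1/5}$ factor. Second, introduce the spectral Littlewood--Paley projectors $P^\Omega_{\leq N}$, $P^\Omega_{>N}$ associated to $\sqrt{-\Delta_\Omega}$ (as developed in the KVZ series), and split $\omega_0=P^\Omega_{\leq N}\omega_0+P^\Omega_{>N}\omega_0$. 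For the low-frequency piece, combine the $X^1$-Strichartz bound (giving control of $\|\nabla e^{it\Delta_\Omega}P^\Omega_{\leq N}\omega_0\|_{L^5_t L^{30/11}_x}$ by $\|\omega_0\|_{H^1_0}$) with the spatial H\"older inequality on $B(z,R)$, which costs a factor $R^{3(17/30-11/30)}=R^{3/5}$, yielding a bound of the form $R^{3/5}\,\|\omega_0\|_{H^1_0}$. For the high-frequency piece, apply the Kato local smoothing estimate
\[
\bigl\|\nabla e^{it\Delta_\Omega}f\bigr\|_{L^2_t L^2_x(\R\times B(z,R))}\lesssim R^{1/2}\,\|f\|_{\dot H^{1/2}(\Omega)},
\]
together with Bernstein $\|P^\Omega_{>N}\omega_0\|_{\dot H^{1/2}}\lesssim N^{-1/2}\|\omega_0\|_{H^1_0}$, and convert the resulting $L^2_tL^2_x$ bound back to $L^5_tL^{30/17}_x$ by an interpolation argument (against a trivial $L^\infty_tL^2_x$ bound and against $\|u\|_{L^5_{t,x}}$). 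Finally, optimize the cutoff $N$ in terms of $R$ and of the ratio $\|u\|_{L^5_{t,x}}/\|\omega_0\|_{H^1_0}$; the balance equating the low- and high-frequency contributions produces the geometric-mean weights $1/6$, $5/6$ on the two norms on the right-hand side, and the power $R^{31/60}$ emerges as the convex combination of $R^{3/5}$ and $R^{1/2}$ with these weights (after accounting for the Bernstein loss).

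\textbf{Main obstacle.} The crux, and the reason the result is pulled from [KVZ16] rather than re-derived, is the absence of a standard Littlewood--Paley calculus in $\Omega$: the classical Fourier projectors do not commute with $\Delta_\Omega$, so one must work with the spectral projectors $\mathbf 1_{\sqrt{-\Delta_\Omega}\in[N,2N)}$. Bernstein-type inequalities, the square-function characterization of Dirichlet Sobolev spaces, equivalence of $\dot W^{s,p}_{D}(\Omega)$ with $\dot W^{s,p}(\R^3)$ in the relevant range, and Kato smoothing outside a convex obstacle, are precisely the technical package built in Killip--Visan--Zhang's papers; once it is available, the optimization above is essentially a bookkeeping exercise. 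An additional delicate point is that the Kato bound must be used uniformly in the center $z$ of the ball, so the smoothing inequality one starts from has to be the translation-uniform version, which on an exterior domain (with no translation invariance) requires the geometry of the convex obstacle through Morawetz-type identities.
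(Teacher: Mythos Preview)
The paper does not prove this proposition: it is quoted directly from Killip--Visan--Zhang (the citation in the statement) and used as a black box in the proof of Claim~\ref{the euror}. So there is no proof in the paper to compare yours against, and your general outline---spectral Littlewood--Paley decomposition, Bernstein, Kato smoothing, then optimization in the frequency cutoff $N$---is indeed the strategy of the cited reference.

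That said, your low-frequency step has a gap that breaks the optimization. As you have written it, the low-frequency contribution is bounded by $R^{3/5}\|\omega_0\|_{H^1_0}$ via Strichartz plus spatial H\"older, \emph{with no dependence on $N$ and no appearance of} $\|e^{it\Delta_\Omega}\omega_0\|_{L^5_{t,x}}$. With that bound in hand, the optimal choice is simply $N=\infty$: the high-frequency piece disappears and you are left with $T^{1/5}R^{3/5}\|\omega_0\|_{H^1_0}$, which is not the stated inequality and in particular never produces the factor $\|e^{it\Delta_\Omega}\omega_0\|_{L^5_{t,x}}^{1/6}$. The correct low-frequency treatment uses Bernstein to replace $\nabla P^\Omega_{\leq N}$ by a factor of $N$ and then H\"older in space to land on the $L^5_{t,x}$ norm of the linear evolution itself (not on $\|\omega_0\|_{H^1_0}$ via Strichartz); this gives a bound that grows in $N$ and carries $\|e^{it\Delta_\Omega}\omega_0\|_{L^5_{t,x}}$, and balancing it against the high-frequency local-smoothing piece (which decays in $N$) is what yields the weights $1/6$, $5/6$ and the power $R^{31/60}$.
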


\begin{lem}[Stability,\cite{KiVisnaZhang16}]
\label{stability}
Let $I\subset \R$ be a time interval and let $\tilde{u}$ be an approximate solution to (NLS$_\Omega$) on $ I \times \Omega$ in the sense that $$i \partial_t \tilde{u} + \Delta_{\Omega} \tilde{u}= - \left| \tilde{u} \right|^2 \tilde{u} + e    \; \text{ for some function } e. $$
Assume that $$  \left\| \tilde{u} \right\|_{L^{\infty} H^1_0(I\times \Omega)} \leq \mathcal{E} \; \text{ and } \; \left\| \tilde{u} \right\|_{L^5_{t,x}(I \times \Omega)} \leq L    $$
for some positive constants $ \mathcal{E}$ and $L$. Let $t_0 \in I $ and $u_0 \in H^1_0(\Omega)$ and assume the smallness conditions $$ \left\| \tilde{u}(t_0) - u_0 \right\|_{H^1_0(\Omega)} \leq \varepsilon \; \text{ and } \; \left\| e \right\|_{N^1(I)} \leq \varepsilon$$
for some $0<\varepsilon < \varepsilon_1=\varepsilon_1(\mathcal{E},L).$ Then there exists a unique solution $u:I \times \Omega \longrightarrow \C$ to (NLS$_\Omega$) with initial data $u(t_0)=u_0$ satisfying  $$ \left\| u - \tilde{u} \right\|_{X^1(I\times \Omega)} \leq C( \mathcal{E}, L ) \varepsilon.   $$
\end{lem}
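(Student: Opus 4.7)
The plan is to run the standard stability bootstrap: set $w := u - \tilde{u}$, so that $w$ solves the perturbation equation
\begin{equation*}
i\partial_t w + \Delta_\Omega w = -\bigl(|\tilde{u}+w|^2(\tilde{u}+w) - |\tilde{u}|^2 \tilde{u}\bigr) - e, \qquad w(t_0) = u_0 - \tilde{u}(t_0),
\end{equation*}
and aim to control $\|w\|_{X^1(I)}$ by $\varepsilon$ via the Strichartz estimate \eqref{strich-S1}. Since $\|\tilde{u}\|_{L^5_{t,x}(I\times\Omega)} \leq L$, I would first partition $I$ into $J = J(L,\eta)$ consecutive subintervals $I_1,\dots,I_J$ on each of which $\|\tilde{u}\|_{L^5_{t,x}(I_j\times\Omega)} \leq \eta$, where $\eta$ is a small absolute constant to be fixed at the end.

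On each $I_j$ with left endpoint $t_j$, applying Theorem \ref{Strichartz} to $w$ gives
\begin{equation*}
\|w\|_{X^1(I_j)} \leq C \bigl( \|w(t_j)\|_{H^1_0(\Omega)} + \bigl\||\tilde{u}+w|^2(\tilde{u}+w) - |\tilde{u}|^2 \tilde{u}\bigr\|_{N^1(I_j)} + \|e\|_{N^1(I_j)} \bigr).
\end{equation*}
The key nonlinear estimate to establish, using H\"older with the dual Strichartz pair $(\tfrac{5}{3},\tfrac{30}{23})$, the Sobolev embedding $X^1 \hookrightarrow L^5_{t,x}$, and a fractional Leibniz/chain rule for $(-\Delta_\Omega)^{1/2}$, reads schematically
\begin{equation*}
\bigl\| |\tilde{u}+w|^2(\tilde{u}+w) - |\tilde{u}|^2 \tilde{u} \bigr\|_{N^1(I_j)} \lesssim \bigl( \|\tilde{u}\|_{L^5_{t,x}(I_j\times\Omega)}^2 + \|w\|_{L^5_{t,x}(I_j\times\Omega)}^2 \bigr) \|w\|_{X^1(I_j)}.
\end{equation*}
Plugging this into the Strichartz bound, absorbing the $\eta^2\|w\|_{X^1(I_j)}$ term on the left, and running a continuity argument in $\|w\|_{X^1(I_j)}$ yields
\begin{equation*}
\|w\|_{X^1(I_j)} \leq 2C \bigl( \|w(t_j)\|_{H^1_0(\Omega)} + \|e\|_{N^1(I_j)} \bigr),
\end{equation*}
provided the right-hand side is small enough, say $\leq \delta(\eta)$. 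A second application of Strichartz to $w$ on $[t_j, t_{j+1}]$ then controls $\|w(t_{j+1})\|_{H^1_0(\Omega)}$ by the same quantity. Iterating across $j=1,\dots,J$ amplifies the initial data and error by a geometric factor $(2C)^J$, so choosing $\varepsilon_1(\mathcal{E},L)$ sufficiently small keeps every intermediate step within the smallness window, yielding $\|w\|_{X^1(I)} \leq C(\mathcal{E},L)\varepsilon$.

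The main obstacle is the nonlinear $N^1$ estimate, because $(-\Delta_\Omega)^{1/2}$ does not commute with multiplication and one cannot simply invoke the Euclidean fractional product rule on $\Omega$. This requires the equivalence of $\|(-\Delta_\Omega)^{1/2} f\|_{L^p(\Omega)}$ with an honest Sobolev norm, plus a corresponding chain rule in the exterior of a convex obstacle as developed in \cite{killip2015riesz}. Once that ingredient is granted, the partition/bootstrap argument above goes through almost verbatim as in the Euclidean case; the hypothesis that $\tilde{u}$ lies in $L^\infty_t H^1_0 \cap L^5_{t,x}$ is exactly what is needed to control the number $J$ of subintervals in terms of $\mathcal{E}$ and $L$ alone.
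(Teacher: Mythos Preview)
The paper does not supply a proof of this lemma; it is quoted directly from \cite{KiVisnaZhang16}. Your outline is the standard stability argument and is essentially the one used there, so there is no alternative approach to compare against.

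Two small points are worth tightening. First, your schematic nonlinear bound
\[
\bigl\| |\tilde u+w|^2(\tilde u+w)-|\tilde u|^2\tilde u \bigr\|_{N^1(I_j)} \lesssim \bigl(\|\tilde u\|_{L^5_{t,x}(I_j)}^2+\|w\|_{L^5_{t,x}(I_j)}^2\bigr)\|w\|_{X^1(I_j)}
\]
is incomplete: differentiating produces terms of the type $\tilde u\,(\nabla\tilde u)\,w$, which contribute $\|\tilde u\|_{L^5_{t,x}(I_j)}\|\tilde u\|_{X^1(I_j)}\|w\|_{L^5_{t,x}(I_j)}$. You therefore need, as a preliminary step, the bound $\|\tilde u\|_{X^1(I_j)}\le C\mathcal{E}$, obtained by applying Strichartz to $\tilde u$ itself on $I_j$ and absorbing $\eta^2\|\tilde u\|_{X^1(I_j)}$. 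As a consequence the smallness parameter $\eta$ must be chosen depending on $\mathcal{E}$, not as an absolute constant; this is also why the final constants depend on $\mathcal{E}$ and not only on $L$.

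Second, the ``main obstacle'' you flag is milder than stated: since $N^1$ and $X^1$ involve one full derivative, one only needs the equivalence $\|(-\Delta_\Omega)^{1/2}f\|_{L^p}\simeq\|\nabla f\|_{L^p}$ from \cite{killip2015riesz} and then the ordinary Leibniz rule, rather than a genuine fractional product rule. With these adjustments your bootstrap goes through as written.
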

\begin{theo}[Linear profile decomposition in $H^1_0(\Omega)$, {\cite[Theorem 3.2]{KiVisnaZhang16}}]
\label{profile decom}
Let $\{f_n\} $ be a bounded sequence in $H^1_0(\Omega).$ After passing to a subsequence, there exist $J ^*\in \{ 0,1,2,...., \infty\},\\ \{ \phi_n^j \}_{j=1}^{J^*} \subset H^1_0(\Omega)\setminus \{0\},$ $\{t_n^j\}_{j=1}^{J^*} \subset \R  $ such that, for each $j$ either $t_n^j \equiv 0$ or $t_n^j \rightarrow \pm \infty$ and $\{x_n^j  \}_{j=1}^{J^*}\subset \Omega$ conforming to one of the following two cases for each $j:$ \\
Case 1: $x_n^j =0$ %\rightarrow x_\infty \in \Omega  $ 
and there exists $\phi^j \in H^1_0(\Omega) $ so that $\phi_n^j:= e^{i t_n^j \Delta_{\Omega} } \phi^j. $ \\ 
Case 2: $|x_n^j|   \rightarrow \infty$   %$d(x_n^j) \rightarrow \infty$
 and there exists $\phi^j \in H^1(\R^3) $ so that $$ \phi_n^j:= e^{it_n^j \Delta_{\Omega}}[(\chi_n^j \phi^j)(x-x_n^j)] \quad 
\text{with} \quad \chi_n^j(x):= \chi\left(\frac{x}{|x_n^j|}\right) .$$
%\chi\left(\frac{x}{d(x_n^j)}\right) .$$
Moreover, for any finite $0\leq J \leq J^*$ we have the decomposition $$ f_n= \sum_{j=1}^{J} \phi_n^j + \omega_n^J  $$ with the remainder $\omega_n^J \in H^1_0(\Omega) $ satisfying 
\begin{align}
\label{limJ limsup_n omega}
    \lim_{J \rightarrow J^*} \limsup_{ n \rightarrow \infty} \left\| e^{it \Delta_{\Omega}} \omega_{n}^J  \right\|_{L^5_{t,x}(\R \times \Omega)} &=0, \\ 
\forall J \geq 1, \quad    \lim_{n \rightarrow \infty } \bigg\{ M[f_n]- \sum_{j=1}^{J} M[\phi_n^j]-M[\omega_n^J]\bigg\}  &=0,  \\
\forall J \geq 1, \quad \quad  \lim_{n \rightarrow \infty} \bigg\{ E[f_n]- \sum_{j=1}^{J} E[\phi_n^j]-E[\omega_n^J] \bigg\}&=0, \\ 
 %   [e^{-it_n^J \Delta_{\Omega}}\omega_n^J](x+x_n^J) \rightharpoonup &0 \;  \text{ weakly in } H^1(\R^3), \\ 
 \label{asymptotic ortho para}
     \lim_{n \rightarrow \infty } \left| x_n^j - x_n^k \right| + \left|t_n^j -t_n^k  \right|=& \infty \;  \text{ for each } j \neq k . 
 \end{align}
\end{theo}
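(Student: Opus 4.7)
The plan is to adapt the Keraani--Gérard inverse-Strichartz and iterate-and-extract scheme to the exterior domain, with a dichotomy on whether the concentration centers stay close to the obstacle or escape to infinity. Since \Nls is $H^1$-subcritical, there is no scale parameter; only space and time translations enter the decomposition, which keeps the bookkeeping relatively simple.

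The first and most substantial step is an \emph{inverse Strichartz principle}: if $\{f_n\}\subset H^1_0(\Omega)$ is bounded and $\limsup_n \|e^{it\Delta_\Omega}f_n\|_{L^5_{t,x}(\R\times\Omega)}\geq \varepsilon>0$, then a subsequence admits the extraction of one nontrivial bubble at parameters $(t_n,x_n)$. The main tool is a refined Strichartz estimate bounding $\|e^{it\Delta_\Omega}f\|_{L^5_{t,x}}$ by a product of $\|f\|_{H^1_0(\Omega)}$ and a Littlewood--Paley maximal function of $e^{it\Delta_\Omega}f$; this follows from the $\Delta_\Omega$-adapted Littlewood--Paley theory together with the local smoothing inequality of Proposition \ref{local smoothing}. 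The near-extremizing space-time point $(t_n,x_n)$ is the candidate bubble center, and whether the time parameter stays bounded or diverges dictates whether we set $t_n^j\equiv 0$ (a static profile) or $t_n^j\to\pm\infty$ (a dispersive profile). A dichotomy on $\{x_n\}$ then produces the two spatial cases of the statement: if $\{x_n\}$ stays bounded, translate so $x_n=0$ and take $\phi^j$ to be the weak $H^1_0(\Omega)$-limit of $e^{-it_n\Delta_\Omega}f_n$; if $|x_n|\to\infty$, extend $f_n$ by zero to $\R^3$ and let $\phi^j\in H^1(\R^3)$ be the weak limit of $e^{-it_n\Delta_{\R^3}}f_n(\cdot+x_n)$. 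In the second case, multiplying by $\chi_n^j$ and translating back gives a genuine element of $H^1_0(\Omega)$, and $\chi_n^j\phi^j\to\phi^j$ in $H^1(\R^3)$ because $|x_n^j|\to\infty$.

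The third step is iteration. Set $\omega_n^1:=f_n-\phi_n^1$ and repeat the extraction. Weak convergence yields a Pythagorean decoupling of the $H^1_0$-norm, hence of the mass and of the kinetic part of the energy; the nonlinear $L^4$-part of the energy decouples via the Brezis--Lieb lemma and the asymptotic separation of the profiles. Each extracted $\phi^j$ has $H^1_0$-norm bounded below in terms of the residual Strichartz size at that step, so boundedness of $\|f_n\|_{H^1_0}$ forces the residual Strichartz norm to zero as $J\to J^*$, giving \eqref{limJ limsup_n omega}. The orthogonality \eqref{asymptotic ortho para} is automatic from the construction: if $|x_n^j-x_n^k|+|t_n^j-t_n^k|$ stayed bounded for some $j<k$, the $k$-th extraction would detect a nontrivial piece of $\phi^j$, contradicting $e^{-it_n^j\Delta_\Omega}\omega_n^{k-1}(\cdot+x_n^j)\rightharpoonup 0$.

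I expect the main obstacle to be in Case~2: rigorously justifying that $e^{it\Delta_\Omega}[(\chi_n^j\phi^j)(\cdot-x_n^j)]$ is close, in the Strichartz spaces above and in $H^1_0(\Omega)$, to the Euclidean evolution $(e^{it\Delta_{\R^3}}\phi^j)(\cdot-x_n^j)$ restricted to $\Omega$. This requires propagating the cut-off past the obstacle with an error vanishing as $|x_n^j|\to\infty$, which is where the local smoothing Proposition \ref{local smoothing} and the strict convexity of $\Theta$ are consumed; this comparison is precisely the ingredient supplied by the analysis of \cite{KiVisnaZhang16} for the exterior-domain setting.
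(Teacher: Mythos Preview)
The paper does not prove this theorem; it is quoted verbatim from \cite[Theorem 3.2]{KiVisnaZhang16} and used as a black box in Section~\ref{Compactness properties}. There is therefore no proof in the present paper to compare your proposal against.

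That said, your outline is a faithful sketch of the standard argument and matches the strategy carried out in \cite{KiVisnaZhang16}: inverse Strichartz via a refined estimate, extraction of one bubble with a bounded/unbounded dichotomy on the center $x_n$, iteration with Pythagorean decoupling of mass and energy, and orthogonality of parameters from the weak-limit construction. You have also correctly located the main technical point specific to the exterior domain, namely the comparison in Case~2 between $e^{it\Delta_\Omega}$ applied to a far-translated cut-off and the free evolution $e^{it\Delta_{\R^3}}$; in \cite{KiVisnaZhang16} this is handled by their convergence-of-domains machinery (their Proposition~2.13, invoked in the present paper at \eqref{conv-domain-comp}), together with the local smoothing estimate recorded here as Proposition~\ref{local smoothing}. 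Your sketch would benefit from making that dependence explicit rather than gesturing at ``propagating the cut-off past the obstacle,'' but the overall architecture is correct.
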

\begin{theo}[{\cite[Theorem 4.1]{KiVisnaZhang16}}]
\label{contruction of v_n if d(x_n)}
Let $\{t_n\} \subset \R $ be such that $t_n \equiv 0$ or $t_n \rightarrow \pm \infty$. Let $\{x_n\} \subset \Omega$ be such that $|x_n|$ tends to $\infty,$ as $n$ goes to $\infty.$  Assume $\phi \in H^1(\R^3)$ satisfies 
\begin{align}
     \left\| \nabla \phi \right\|_{L^2(\R^3)}  \left\| \phi \right\|_{L^2(\R^3)} &< \left\| \nabla Q \right\|_{L^2(\R^3)}  \left\| Q \right\|_{L^2(\R^3)}  ,\\
     \label{energy-mass-phi-Q}
 M_{\R^3}[\phi] E_{\R^3}[\phi] &< M_{\R^3}[Q] E_{\R^3}[Q]. 
\end{align}
Define $$ \phi_n:=e^{it_n \Delta_\Omega}\left[ (\chi_n \phi)(x-x_n) \right] \quad \text{with} \quad \chi_n(x):= \chi\left(\frac{x}{|x_n|} \right) .$$
Then, for $n $ sufficiently large, there exists a global solution $v_n$ to \Nls with initial data $v_n(0):=\phi_n$, which satisfies $$ \left\| v_n \right\|_{L^5_{t,x}(\R \times \Omega)} \leq C (\left\| \phi \right\|_{H^1(\R^3)} ) . $$
Furthermore, for any $\varepsilon > 0 $ there exists $N_\varepsilon \in \N$ and $\psi_\varepsilon \in C_c( \R \times \R^3)$ such that, for all $n \geq N_\varepsilon,$
\begin{equation}
\label{v_n^j-psi-R3}
\left\| \underline{v}_n(t-t_n, x+x_n) - \psi_{\varepsilon}(t,x) \right\|_{L^5 H^{1,\frac{30}{11}}(\R\times \R^3)} < \varepsilon .
\end{equation}
\end{theo}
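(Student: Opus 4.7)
The plan is to transplant a global scattering $\R^3$ solution associated to $\phi$ into $\Omega$, using $|x_n|\to\infty$ to keep the profile away from the obstacle, and to close via the stability lemma. By the Duyckaerts--Holmer--Roudenko sub-threshold scattering theorem, there exists a global $\R^3$ NLS solution $w\in C(\R;H^1(\R^3))$ with $\|w\|_{L^5_{t,x}(\R\times\R^3)}+\|w\|_{S^1(\R)}<\infty$, chosen so that $w(0)=\phi$ when $t_n\equiv 0$, and so that $\|w(t)-e^{it\Delta_{\R^3}}\phi\|_{H^1(\R^3)}\to 0$ as $t\to\pm\infty$ in the respective dispersive case (the latter trajectory produced by a fixed-point on $[T_0,\infty)$ for $T_0$ large, where $\|e^{it\Delta_{\R^3}}\phi\|_{L^5_{t,x}}$ is small, and extended globally).

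Next, set
\[
\tilde v_n(t,x):=\chi_n(x-x_n)\,w(t+t_n,x-x_n), \qquad x\in\Omega,
\]
extended by $0$ across $\partial\Omega$. For $n$ large this lies in $C(\R;H^1_0(\Omega))$ because $\chi_n$ is supported in $\{|y|\le|x_n|/2\}$ while $\Theta\subset B(0,1)$. Using that $w$ solves \nls, the error is
\[
e_n = 2(\nabla\chi_n)(x-x_n)\cdot(\nabla w)(t+t_n,x-x_n) + (\Delta\chi_n)(x-x_n)\,w(t+t_n,x-x_n) + (\chi_n^3-\chi_n)|w|^2w,
\]
supported in the annulus $|x-x_n|\in[|x_n|/4,|x_n|/2]$. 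Since $|\nabla\chi_n|=O(|x_n|^{-1})$, $|\Delta\chi_n|=O(|x_n|^{-2})$, and $w\in L^\infty H^1\cap L^5_{t,x}$, dominated convergence on the receding spatial tails gives $\|e_n\|_{N^1(\R)}\to 0$. For the initial data comparison, in the case $t_n\equiv 0$ one has $\tilde v_n(0)=(\chi_n\phi)(\cdot-x_n)=\phi_n$ exactly; in the case $t_n\to\pm\infty$, the scattering identity on $w$ and translation invariance of the $\R^3$ free flow give $\tilde v_n(0)\approx \chi_n(\cdot-x_n)\bigl[e^{it_n\Delta_{\R^3}}\phi\bigr](\cdot-x_n)$, which must be compared with $\phi_n=e^{it_n\Delta_\Omega}[(\chi_n\phi)(\cdot-x_n)]$.

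The main technical obstacle is precisely this comparison: one must replace the Dirichlet free flow $e^{it_n\Delta_\Omega}$ by $e^{it_n\Delta_{\R^3}}$ on data concentrated far from $\Theta$ as $|x_n|\to\infty$. I expect to handle this via the local smoothing estimate of Proposition \ref{local smoothing}, which quantitatively absorbs the boundary contribution, combined with the convergence $\chi_n\phi\to\phi$ in $H^1(\R^3)$ and the unitarity of the $\R^3$ free flow on $H^1$. Once both $\|\tilde v_n(0)-\phi_n\|_{H^1_0(\Omega)}\to 0$ and $\|e_n\|_{N^1(\R)}\to 0$ are in hand, the stability lemma (Lemma \ref{stability}) with $\mathcal E\sim\|\phi\|_{H^1(\R^3)}$ and $L\sim\|w\|_{L^5_{t,x}(\R\times\R^3)}$ yields, for all $n$ large, a global solution $v_n$ of \Nls with $v_n(0)=\phi_n$, $\|v_n-\tilde v_n\|_{X^1(\R\times\Omega)}\to 0$, and the uniform bound $\|v_n\|_{L^5_{t,x}(\R\times\Omega)}\lesssim\|w\|_{L^5_{t,x}(\R\times\R^3)}$. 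Finally, approximating $w$ by $\psi_\varepsilon\in C_c(\R\times\R^3)$ in $L^5 H^{1,30/11}(\R\times\R^3)$ and applying the triangle inequality
\[
\underline v_n(t-t_n,\cdot+x_n)-\psi_\varepsilon=\bigl[\underline v_n(t-t_n,\cdot+x_n)-\chi_n w\bigr]+\bigl[\chi_n w-w\bigr]+\bigl[w-\psi_\varepsilon\bigr]
\]
yields \eqref{v_n^j-psi-R3}, each bracket being less than $\varepsilon$ in $L^5 H^{1,30/11}(\R\times\R^3)$ for $n\ge N_\varepsilon$.
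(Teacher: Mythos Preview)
The paper does not prove this statement at all: it is quoted verbatim from \cite[Theorem 4.1]{KiVisnaZhang16}, and only the remark following it is original. So there is no in-paper proof to compare against; your proposal is in effect a sketch of the argument in \cite{KiVisnaZhang16}, and the overall architecture you describe---build the global scattering $\R^3$ solution $w$ attached to $\phi$, transplant it by $\tilde v_n(t,x)=\chi_n(x-x_n)w(t+t_n,x-x_n)$, show $\|e_n\|_{N^1}\to 0$, match the data, and conclude by Lemma~\ref{stability}---is exactly the right one.

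There is, however, a genuine gap at precisely the point you flag as ``the main technical obstacle''. You propose to compare $e^{it_n\Delta_\Omega}[(\chi_n\phi)(\cdot-x_n)]$ with $\chi_n(\cdot-x_n)\,e^{it_n\Delta_{\R^3}}\phi(\cdot-x_n)$ via Proposition~\ref{local smoothing}. That estimate does not do this: it bounds $\nabla e^{it\Delta_\Omega}\omega_0$ on a fixed space--time box in terms of $\|e^{it\Delta_\Omega}\omega_0\|_{L^5_{t,x}}$ and $\|\omega_0\|_{H^1_0}$, i.e., it is a smoothing bound for a \emph{single} propagator, not a comparison between the Dirichlet and whole-space flows. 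What is actually needed is a convergence-of-domains statement: if $\Omega_n:=\Omega-\{x_n\}$ and $\psi\in C_c^\infty(\R^3)$, then $\|e^{it\Delta_{\Omega_n}}\psi-e^{it\Delta_{\R^3}}\psi\|\to 0$ in Strichartz norms. The present paper uses exactly this input later (see \eqref{conv-domain}--\eqref{conv-domain-comp}, citing \cite[Proposition~2.13]{KiVisnaZhang16}) when handling Scenario~II, Case~2, in the proof of Proposition~\ref{compac-prop}. With that ingredient in place your data-matching step closes: approximate $\phi$ by $\psi_\varepsilon\in C_c^\infty$, pass from $\Delta_\Omega$ to $\Delta_{\R^3}$ on the translated data, and then commute $\chi_n$ through $e^{it_n\Delta_{\R^3}}$ using $\chi_n\to 1$ on any compact set. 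Without it, the case $t_n\to\pm\infty$ is not proved.
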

\begin{rem}                                                                                                                                                                                                            
Note that, we have made a slight modification in the notation of the above Theorem \ref{contruction of v_n if d(x_n)}, in order to keep the consistent notation in this paper. We denote $\underline{v}_n$ the extension of the solution $v_n$ by $0$ on $\Omega^c$ such that $\underline{v}_n \in H^1(\R^3).$
 Let us mention that  $\phi_n$ is well defined in $H^1_0(\Omega).$ Indeed, by the definition of $\chi_n$ and as $|x_n| \rightarrow \infty,$ we have 
 \begin{equation*}
  x \in \partial \Omega \quad \Longrightarrow \quad \chi_n (x) = 0 \quad \text{ as  } n \rightarrow + \infty . 
 \end{equation*}
 
Moreover, one can check that the energy-mass assumption \eqref{energy-mass-phi-Q} is equivalent to the one given in \cite[Theorem 4.1]{KiVisnaZhang16} using the following identity: 
\begin{multline*}
\bigg\{ u_0 \in H^1(\R^3): E_{\R^3}[u_0]M_{\R^3}[u_0] < E_{\R^3}[Q]M_{\R^3}[Q] \bigg\} \\ =\bigcup_{0 < \lambda <\infty} \bigg\{ u_0 \in H^1(\R^3):  E_{\R^3}[u_0]+\lambda M_{\R^3}[u_0] <2 \sqrt{\lambda E_{\R^3}[Q]M_{\R^3}[Q]} \bigg\},
\end{multline*}
which follows by computing the minimum, of $\lambda \mapsto
E_{\R^3}[u_0]+\lambda M_{\R^3}[u_0] -2 \sqrt{\lambda
E_{\R^3}[Q]M_{\R^3}[Q]}$.
\end{rem}
\section{Modulation}
\label{Sec-Modulation}
Let $u \in H^1_0(\Omega)$ and  define $$ \delta(u)= \left| \int_{\R^3} \left|  \nabla Q  \right|^2  -  \int_{\Omega} \left|  \nabla u \right|^2  \,  \right| .$$ 
Assume that
\begin{equation}
\label{MEu=MEQ}
M[u]=M_{\R^3}[Q] \quad \text{and} \quad  E[u]=E_{\R^3}[Q] .
\end{equation}
\begin{lem}
Let $ u \in H^1_0(\Omega)$ satisfying \eqref{MEu=MEQ} and $\delta(u)$ small enough. Then there exists $X_0 \in \R^3$ large and $\theta_0 \in \R$ such that 
\begin{equation}
 e^{-i \theta_0}  u(x)= Q(x-X_0) \Psi(x)+ h(x)
\end{equation}
with $\lVert h \rVert_{H^1_0(\Omega)} \leq   \tilde{ \varepsilon}( \delta( u )),$ where
 $\tilde{ \varepsilon}( \delta( u )) \rightarrow 0$ as  $\delta( u ) \rightarrow 0 .$
\end{lem}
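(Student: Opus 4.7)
The plan is to extend $u$ by zero off $\Omega$, apply Proposition \ref{uConvQ} on $\R^3$, and then reintroduce the Dirichlet constraint via the cutoff $\Psi$ at essentially no cost thanks to the fact that the translation parameter must be far from the obstacle.

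First I would work with the extension $\underline{u}\in H^1(\R^3)$ defined in \eqref{defunderlineU}, for which
$$\|\underline{u}\|_{L^2(\R^3)}^2 = M[u] = \|Q\|_{L^2(\R^3)}^2, \qquad E_{\R^3}[\underline{u}] = E[u] = E_{\R^3}[Q],$$
and $\bigl|\|\nabla \underline{u}\|_{L^2(\R^3)}^2 - \|\nabla Q\|_{L^2(\R^3)}^2\bigr| = \delta(u)$. From the definition of the energy,
$$\|\underline{u}\|_{L^4(\R^3)}^4 - \|Q\|_{L^4(\R^3)}^4 = 2\bigl(\|\nabla \underline{u}\|_{L^2}^2 - \|\nabla Q\|_{L^2}^2\bigr),$$
so the three norms $\|\underline{u}\|_{L^2}$, $\|\nabla\underline{u}\|_{L^2}$, $\|\underline{u}\|_{L^4}$ are all within some $\eta=\eta(\delta(u))$ of their counterparts for $Q$, with $\eta\to 0$ as $\delta(u)\to 0$. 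Proposition \ref{uConvQ} then produces $\theta_0\in\R$ and $X_0\in\R^3$ with
$$\bigl\|\underline{u} - e^{i\theta_0}Q(\cdot - X_0)\bigr\|_{H^1(\R^3)} \leq \varepsilon(\eta),$$
and the bound \eqref{x0large} guarantees that $|X_0|$ is large once $\eta$ is small, which is the property that will make the upcoming cutoff correction harmless.

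I would then set
$$h(x) := e^{-i\theta_0}u(x) - Q(x - X_0)\Psi(x), \quad x\in\Omega,$$
and check that $h\in H^1_0(\Omega)$: $u$ vanishes on $\partial\Omega$ by assumption and $\Psi$ vanishes near $\Theta$ by \eqref{def-psi}. Extending to $\R^3$ one splits
$$h = \bigl(e^{-i\theta_0}\underline{u} - Q(\cdot - X_0)\bigr) + Q(\cdot - X_0)\bigl(1 - \Psi\bigr),$$
bounds the first summand by $\varepsilon(\eta)$ in $H^1(\R^3)$ by the previous paragraph, and bounds the second by $Ce^{-|X_0|}/|X_0|$ using the exponential decay of $Q$ and its derivatives from Proposition \ref{decay1 Q}, together with the fact that $1-\Psi$ is supported in a bounded neighborhood of $\Theta$ (an instance of Lemma \ref{compac-supp-argu}). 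Summing the two contributions yields $\|h\|_{H^1_0(\Omega)} \leq \tilde{\varepsilon}(\delta(u))$ with $\tilde{\varepsilon}(\delta(u))\to 0$, as required.

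The only mildly non-routine step is verifying that the three proximity hypotheses of Proposition \ref{uConvQ} are simultaneously satisfied for $\underline{u}$ — in particular, extracting the closeness of $\|\underline{u}\|_{L^4}$ to $\|Q\|_{L^4}$ from the energy identity and the smallness of $\delta(u)$. Everything else, namely the cutoff correction and the $H^1_0$ regularity of $h$, comes essentially for free from the exponential decay of $Q$, provided we exploit the key fact that the translation parameter $X_0$ provided by Proposition \ref{uConvQ} is forced by \eqref{x0large} to lie far from the obstacle.
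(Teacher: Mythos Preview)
Your proposal is correct and follows essentially the same route as the paper: extend $u$ by zero, invoke Proposition \ref{uConvQ} to get $(\theta_0,X_0)$, use \eqref{x0large} to force $|X_0|$ large, and absorb the cutoff correction $Q(\cdot-X_0)(1-\Psi)$ via Lemma \ref{compac-supp-argu}. The only cosmetic difference is that you verify $h\in H^1_0(\Omega)$ by checking boundary values directly, whereas the paper argues that $h$ vanishes on all of $\Omega^c$; your explicit derivation of the $L^4$-closeness from the energy identity is also a small clarification the paper leaves implicit.
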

\begin{proof}
Let  $\uu \in H^1(\R^3)$ be defined as above in \eqref{defunderlineU}
and observe that $\delta(u)=\delta(\uu).$
By Proposition \ref{uConvQ}, since
 \begin{equation}
\label{MEubar=MEQ}
M[u]=M_{\R^3}[\uu]=M_{\R^3}[Q], \quad E[u]=E_{\R^3}[\uu]=E_{\R^3}[Q] ,
\end{equation}
and $\delta(\uu)$ being small enough,  there exist $ \theta_0 \in \R$ and $X_0 \in \R^3$ such that $$e^{-i \theta_0 } \uu(x)=Q(x-X_0)+ \tilde{h}(x)$$ 
with 
$ \Vert \tilde{h} \rVert_{H^1(\R^3)} \leq \tilde{ \varepsilon}( \delta( \uu )),$
 where $\tilde{\varepsilon}(\delta(\uu)) \longrightarrow 0 $ as $\delta(\uu) \longrightarrow 0.$   \\ 

Moreover, if $x \in \Omega^c,$ then $\uu(x) =0,$ which implies that 
\begin{equation}
\label{Q+r=0}
 x \in \Omega^c \quad \Longrightarrow  \quad  Q(x-X_0)+\tilde{h}(x)=0,  
\end{equation}
and for $\delta(\uu)$ small enough, by \eqref{x0large}, $|X_0|$ is large such that $$\frac{e^{-|X_0|}}{|X_0|}\leq C \, \tilde{\varepsilon}(\delta(\uu)) .$$ 

We write, \begin{align*} e^{-i \theta_0} \uu (x)&= Q(x-X_0) \Psi(x)+ (1-\Psi(x))Q(x-X_0) ) + \tilde{{h}} (x)\\
&= Q(x-X_0) \Psi(x)+ {h}(x).
\end{align*}
Using the fact that $(1-\Psi)$ has a compact support, $Q$ having an exponential decay, $|X_0|$ being large, and Lemma \ref{compac-supp-argu}, we get  $$\lVert h \rVert_{H^1(\R^3)} \leq  \tilde{ \varepsilon}( \delta( \uu ))+  C \frac{e^{-|X_0|}}{|X_0|} \leq \tilde{ \varepsilon}( \delta( \uu )).$$ \\ 
By \eqref{Q+r=0} and the definition of $\Psi $ in \eqref{def-psi}, we have
$$h(x)=0,\quad  \text { if  } \; x \in \Omega^c.$$
Thus,  $h(x)=0$ on $\partial \Omega$ and $h(x)\in H^1_0(\Omega),$ which concludes the proof.
\end{proof}

\begin{lem}
\label{modulation}
There exists $\delta_0>0$ and a positive function $\varepsilon (\delta)$ defined for $0<\delta \leq \delta_0,$  which tends to $0$ when $\delta \rightarrow 0,$ such that for any ${u} \in H^1_0(\Omega)$ satisfying \eqref{MEu=MEQ} and $\delta(u)<\delta_0,$ there exists a couple $(\mu,X)\in \R \times \R^3$ such that the following holds 
 \begin{equation}
 \left\|u(x) - Q(x-X) \Psi(x) e^{i\mu} \right\|_{H^1_0(\Omega)} \leq \varepsilon(\delta),
\end{equation}
\begin{align}
  \re \int_{\Omega}  u(x) \;  \partial_{x_k} (Q(x-X) \Psi(x)) e^{-i\mu} \,  dx &=0, \; \quad k=1,2,3,  \\
\im  \int_{\Omega}  u(x)\;  Q(x-X) \Psi (x) e^{-i\mu}  dx &=0. 
 \end{align}
   The parameters $\mu$ and $X$ are unique in $\R \slash \pi \Z  \times \R^3$ and the mapping $u\rightarrow (\mu,X)$ is $C^1.$
\end{lem}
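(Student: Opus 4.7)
The plan is to apply a quantitative implicit function theorem, using the previous lemma to provide a good initial approximation $(\theta_0, X_0)$ for which $e^{-i\theta_0}u = Q(\cdot - X_0)\Psi + h$ with $|X_0|$ large and $\|h\|_{H^1_0(\Omega)} \leq \tilde\varepsilon(\delta(u))$. The three orthogonality conditions together with the rotation condition get packaged into the $C^1$ map $G : \R \times \R^3 \to \R^4$ whose components are
\begin{align*}
G_k(\mu, X) &= \re \int_\Omega u(x)\, \partial_{x_k}\bigl(Q(x-X)\Psi(x)\bigr)\, e^{-i\mu}\, dx, \quad k=1,2,3,\\
G_4(\mu, X) &= \im \int_\Omega u(x)\, Q(x-X)\Psi(x)\, e^{-i\mu}\, dx.
\end{align*}
The lemma then reduces to exhibiting a unique zero of $G$ near $(\theta_0, X_0)$ together with the approximation bound and $C^1$ dependence on $u$.

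First I would estimate $|G(\theta_0, X_0)|$. Substituting $u = e^{i\theta_0}\bigl(Q(\cdot - X_0)\Psi + h\bigr)$, the principal contribution to $G_k$ is $\tfrac{1}{2}\int_{\R^3}\partial_{x_k}\bigl((Q(\cdot - X_0)\Psi)^2\bigr)\,dx = 0$ (extending by zero on $\Theta$, which is legal since $\Psi$ vanishes there), and the principal contribution to $G_4$ is $\int_\Omega (Q(\cdot-X_0)\Psi)^2 \in \R$, whose imaginary part is zero. The remainders are linear in $h$ and of size $O(\tilde\varepsilon(\delta(u)))$. Next I would compute the Jacobian $dG_{|(\theta_0,X_0)}$ by the same substitution, using $\partial_{X_j}Q(\cdot - X) = -\partial_{x_j}Q(\cdot - X)$ and exploiting parity/reality of the resulting integrands. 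Every contribution in which a derivative falls on the cutoff $\Psi$ lives in a compact annulus far from $X_0$, so by Lemma \ref{compac-supp-argu} it is exponentially small in $|X_0|$. Modulo such errors and $O(\|h\|_{H^1_0})$, the Jacobian is block diagonal with
\begin{equation*}
\partial_\mu G_4 \approx -\|Q\|_{L^2(\R^3)}^2, \qquad \partial_{X_j}G_k \approx \int_{\R^3}\partial_{x_j}Q\,\partial_{x_k}Q\,dx = \tfrac{1}{3}\|\nabla Q\|_{L^2(\R^3)}^2\,\delta_{jk},
\end{equation*}
while $\partial_\mu G_k$ and $\partial_{X_j}G_4$ both vanish at leading order. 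Hence $dG_{|(\theta_0,X_0)}$ is boundedly invertible, uniformly in $|X_0|$.

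With the inverse under uniform control, a quantitative implicit function theorem (equivalently, a Banach fixed-point argument in a ball of radius comparable to $\tilde\varepsilon(\delta(u))$) produces a unique $(\mu, X)$ with $G(\mu, X) = 0$ and $|\mu - \theta_0| + |X - X_0| \leq C\,\tilde\varepsilon(\delta(u))$. The triangle inequality combined with the continuity of $(\mu, X) \mapsto Q(\cdot - X)\Psi e^{i\mu}$ in $H^1_0(\Omega)$ yields the approximation bound $\|u - Q(\cdot - X)\Psi e^{i\mu}\|_{H^1_0(\Omega)} \leq \varepsilon(\delta)$; local uniqueness in $\R/\pi\Z \times \R^3$ and the $C^1$ dependence of $(\mu, X)$ on $u$ are standard by-products of the implicit function theorem. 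The main obstacle I anticipate is ensuring that every estimate is genuinely uniform as $|X_0| \to \infty$: the cutoff $\Psi$ generates parasitic terms supported where $|x|$ is of order one, and it is precisely Lemma \ref{compac-supp-argu} that turns these into $O(e^{-|X_0|}/|X_0|)$ corrections, small enough to preserve invertibility of $dG$ and to close the fixed-point argument.
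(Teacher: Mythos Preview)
Your proposal is correct and follows essentially the same route as the paper: both set up the four orthogonality conditions as a $C^1$ map, compute the Jacobian at an approximate point, show it equals $\mathrm{diag}(\tfrac{1}{3}\|\nabla Q\|_{L^2}^2,\tfrac{1}{3}\|\nabla Q\|_{L^2}^2,\tfrac{1}{3}\|\nabla Q\|_{L^2}^2,-\|Q\|_{L^2}^2)$ up to $O(e^{-2|X_0|})$ corrections coming from the cutoff, and conclude by the implicit function theorem. The only cosmetic difference is that the paper treats the map as a function of $(u,X,\mu)$ and evaluates the Jacobian at the exact profile $(Q(\cdot-X_0)\Psi,X_0,0)$, whereas you fix $u$ and work at the approximate point $(\theta_0,X_0)$ supplied by the previous lemma; your emphasis on uniformity in $|X_0|$ is a welcome clarification that the paper leaves implicit.
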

\begin{proof}
Let \begin{align*}
\Phi: H^1_0(\Omega) \times \R^3 \times \R & \longrightarrow \R^4 \\ 
        (u\;,\;X\;,\;\mu) & \longmapsto  \left(\Phi_k(u,X,\mu)\right)_{1 \leq k \leq 4} ,
\end{align*}
where \begin{align*}
\Phi_k(u,X,\mu)&:=\re \intom u(x)  \; \partial_{x_k} (Q(x-X)\Psi(x)) e^{-i \mu}  \, dx, \; k=1,2,3, \\
\Phi_4(u,X,\mu)&:= \im \intom u(x) \;  Q(x-X) \Psi(x)\,  e^{-i\mu} dx.
\end{align*}
Let $X_0 \in \R^3.$ Note that $\Phi(Q(\cdot-X_0 ) \Psi,X_0,0)=0.$ Indeed, integrating by parts, we get 
 \begin{align*}
 \Phi_k( Q(\cdot-X_0 ) \Psi,X_0,0)&=\re \intom Q(x-X_0)\Psi(x) \partial_{x_k} (Q(x-X_0)\Psi(x)) \, dx \\ 
 &= \frac 12 \re \intom  \partial_{x_k} (    (Q(x-X_0)\Psi(x))^2  ) \, dx =0, \\ 
 \Phi_4( Q(\cdot-X_0 ) \Psi,X_0,0)&=\im \intom  Q(x-X_0)^2\Psi(x)^2 \, dx=0.
 \end{align*}
 
 \begin{itemize}
\item \underline{\bf Step 1:} Computation of $d_{ \left( X , \mu \right)} \Phi_k$. \\ 

We have 
\begin{align*}
\frac{\partial}{\partial X_j} \Phi_k(u,X,\mu)&= - \re \intom e^{-i \mu} u(x) \partial_{x_k}( \partial_{x_j} Q(x-X) \Psi(x)) \, dx .
\end{align*}
Integrating by parts, we obtain
\begin{align*}
\frac{\partial}{\partial X_j} \Phi_k( Q(\cdot-X_0 ) \Psi,X_0,0)&= \re \intom \partial_{x_j} Q(x-X_0) \Psi(x) \partial_{x_k}( Q(x-X_0)\Psi(x)) \, dx. 
\end{align*}

If $k=j,$ we have 
\begin{align*}
\frac{\partial}{\partial X_j} \Phi_k( Q(\cdot-X_0 ) \Psi,X_0,0)&=\re \intom (\partial_{x_j} Q(x-X_0))^2 \, dx  \\&+ \re \intom  ( \partial_{x_j} Q(x-X_0))^2 (\Psi(x)^2-1)\, dx  \\& + \re \intom Q(x-X_0) \partial_{x_j}Q(x-X_0) \Psi(x) \partial_{x_j}\Psi(x) \, dx. 
\end{align*}
Since $\partial_{x_j} \Psi$ and $(\Psi^2 -1)$ have a compact support and $Q$ has an exponential decay, we deduce
\begin{align*}\frac{\partial}{\partial X_j} \Phi_k( Q(\cdot-X_0 ) \Psi,X_0,0)&= \left\| \partial_{x_j} Q \right\|_{L^2(\R^3)}^2+ O(e^{-2|X_0|}) \\
&=\frac 13 \left\| \nabla Q \right\|_{L^2(\R^3)}^2+ O(e^{-2|X_0|}).
\end{align*}
If $k\neq j,$ then 
\begin{align*}
\frac{\partial}{\partial X_j} \Phi_k( Q(\cdot-X_0 ) \Psi,X_0,0)&= \re \intom \partial_{x_j} Q(x-X_0) \Psi(x) \partial_{x_k}( Q(x-X_0)\Psi(x)) \, dx \\
&=\re \intom \partial_{x_j} Q(x-X_0) \partial_{x_k} Q(x-X_0) dx \\&+ \re \intom \partial_{x_j} Q(x-X_0) \partial_{x_k} Q(x-X_0) (\Psi(x)^2 - 1) dx
\\ &+ \re \intom \partial_{x_j}Q(x-X_0)  \Psi(x)   Q(x-X_0) \partial_{x_k} \Psi(x) dx. 
\end{align*}
Using the same argument as before and the fact that $Q$ is radial $( \int  \partial_{x_j} Q \partial_{x_k} Q =0 ,$ if $k \neq j$), we obtain 
$$ \frac{\partial}{\partial X_j} \Phi_k (Q(\cdot-X_0 ) \Psi,X_0,0)= O(e^{-2 |X_0|}). $$
Next, we compute $ \frac{\partial}{\partial {\mu}}  \Phi_k(u,X,\mu)$:  
\begin{align*}
\frac{\partial}{\partial{\mu}}  \Phi_k(u,X,\mu)&=  \re \intom -i e^{-i \mu } u(x) \partial_{x_k}(Q(x-X) \Psi(x) )dx, \\
\frac{\partial}{\partial{\mu}}  \Phi_k ( Q(\cdot-X_0 ) \Psi,X_0,0)&=  \im \intom Q(x-X_0) \Psi(x) \partial_{x_k}(Q(x-X_0)\Psi(x) )\, dx=0 . \\
\end{align*}

\item \underline{\bf Step 2:} Computation of $d_{(X,\mu)} \Phi_4.$ \\ 

We have 
\begin{align*}
 \frac{\partial}{\partial {X_j} }\Phi_4( u,X,\mu)&=- \im \intom e^{-i\mu}  u(x)  (\partial_{x_j}Q(x-X)\Psi(x)) \, dx ,
 \end{align*}
and thus, 
 \begin{align*}
 \frac{\partial}{\partial {X_j}} \Phi_4 (Q(\cdot-X_0 ) \Psi,X_0,0)&= - \im \intom Q(x-X_0)\Psi(x)  \partial_{x_j} ( Q(x -X_0) \Psi(x)) \, dx =0 . 
\end{align*}
Also,
\begin{align*}
\frac{\partial}{\partial \mu }\Phi_4( u,X,\mu)&= \im \intom - i e^{-i \mu} u(x) Q(x-X) \Psi(x) \, dx , \\ 
 \frac{\partial}{\partial {\mu}} \Phi_4 ( Q(\cdot-X_0 ) \Psi,X_0,0)&= -  \intom  Q(x-X_0)^2 \Psi(x) ^2  = -\left\| Q \right\|_{L^2(\R^3)} ^2+ O(e^{-2|X_0|}) . 
 \end{align*}
 \item \underline{\bf Step 3:} Conclusion.  \\ 
 
Combining Step 1 and Step 2, we get
\begin{align*}
d_{(X,\mu)} \Phi( Q(\cdot-X_0 ) \Psi,X_0,0)&= \begin{pmatrix} 
\frac 13 \left\| \nabla  Q  \right\|_{L^2(\R^3)}^2 & 0 & 0 & 0 \\ 
0 & \frac 13  \left\| \nabla Q \right\|_{L^2(\R^3)}^2 & 0 & 0 \\
0 & 0 & \frac 13 \left\| \nabla Q \right\|_{L^2(\R^3)}^2 & 0 \\
0 & 0 & 0 & - \left\| Q  \right\|_{L^2(\R^3)}^2
\end{pmatrix} \\
&+ O(e^{-2|X_0|}). 
\end{align*}
We can deduce that $d_{(X,\mu)} \Phi$ is invertible at $(Q(\cdot-X_0 ) \Psi( \cdot ),X_0,0),$ if $|X_0|$ is large. Then, by the implicit 
function theorem there exists $\epsilon_0, \eta_0>0$ such that for $u\in H^1_0(\Omega),$ we have 
$$ \left\| u(\cdot)-Q(\cdot-X_0) \Psi( \cdot) \right\|_{{H}^1_0(\Omega)}^2 < \epsilon_0 \Longrightarrow \exists ! (X,\mu): \quad |\mu|+|X-X_0| \leq \eta_0 \; \text{ and } \;   \Phi(u,X,\mu)=0.   $$
\end{itemize}
\end{proof}

Let $u(t)$ be a solution of \Nls satisfying \eqref{MEu=MEQ}. In the sequel we write $$ \delta(t)=\delta(u(t)).$$
Let $D_{\delta_0}=\{ t \in I : \;  \delta(t) < \delta_0\}, $ where $I$ is the maximal time interval of existence of $u.$ \\ 
By Lemma \ref{modulation}, we can define $C^1$ functions $X(t)$ and $\mu(t)$ for $t \in D_{\delta_0}.$

We now work with the parameter $\theta(t)= \mu(t)-t.$ Write 
\begin{equation}
\label{decom-u}
e^{-i\theta(t)-it} u(t, x)=(1+\rho (t))Q(x-X(t)) \Psi(x)+h(t, x), 
\end{equation}
where $h(x) \in H^1_0(\Omega)$ and define
$$ \rho(t)=\displaystyle \re  \frac{ e^{-i \theta(t)-it}  \int \nabla  \bigg(Q(x-X(t)) \Psi(x) \bigg) \cdot \nabla \uu(t,x) dx   }{\int \left| \nabla  \big(Q(x-X(t)) \Psi(x) \big) \right| ^2\, dx }-1.$$

This implies that \\ 
\begin{equation} 
\label{decom-u-trans}
e^{-i\theta(t)-it} \uu(t, x+X(t))=(1+\rho (t))Q(x) \Psi(x+X(t))+\bh(t, x+X(t)), 
\end{equation}
where $\bh(x) \in H^1(\R^3)$ is defined by
 \begin{equation*}
 \bh(t,x)=\begin{cases}
 h(t,x) \quad &\forall x \in \Omega, \\
 0 \qquad  &\forall x \in \Omega^c.
 \end{cases}
\end{equation*}

One can see that $\rho(t)$ is chosen such that $h$ satisfies the orthogonality condition 
\begin{multline} 
\label{rho-ortho-DQh} 
 \re \intom \Delta(Q(x-X(t))\Psi(x)) h(t,x) \, dx \\  = \re \int \Delta(Q(x)\Psi(x+X(t))) \bh(t,x+X((t)) \, dx =0 .
\end{multline}

By Lemma \ref{modulation}, $h$ also satisfies the orthogonality conditions 
\begin{equation}
\label{modu-ortho-dQh-im}
\im \intom h(t,x) Q(x-X(t)) \Psi(x) \, dx = \im \int \bh(t,x+X(t)) Q(x) \Psi(x+X(t)) \, dx =0,
\end{equation}
and 
\begin{multline}
\label{modu-ortho-dQh}
\re \intom h(t, x)     \partial_{x_k}    ( Q(x-X(t)) \Psi(x) ) \,dx \\ =\re \int \bh(t, x+X(t))     \partial_{x_k}    ( Q(x) \Psi(x+X(t)) ) \,dx =0, \quad k=1,2,3.
\end{multline}

In the following lemma, to simplify notation, we denote $f(\cdot+X)$ by $f_{_X}(\cdot)$ for any function $f.$ If $f$ is a complex function, then we denote by $f_{1_X}(\cdot)$ the real part of $f_{_X}$ and by $f_{2_X}(\cdot)$ the imaginary part. 

\begin{prop}
\label{lem-equiv-modu-param}
Let $u(t)$ be a solution of \Nls satisfying \eqref{MEu=MEQ}. Then the following estimates hold for $ t \in D_{\delta_{0}}$
\begin{multline}
\label{equiv}
 \left| \rho(t) \right| +O \left( \ee \right) \approx  \left| \int Q\Psi_{_X} \thre dx \right|  +O \left( \ee \right) \approx  \delta(t) +O \left( \ee \right)  \\ \approx \left\| h(t) \right\|_{H^1_0(\Omega)} +O \left( \e \right) .
\end{multline}
\end{prop}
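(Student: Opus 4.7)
The plan is to combine mass conservation, the conserved ``action'' $L := E + \tfrac{1}{2}M$ (at which $Q$ is a critical point, since $-Q+\Delta Q+Q^3=0$), the orthogonality conditions \eqref{rho-ortho-DQh}--\eqref{modu-ortho-dQh}, and the coercivity of Lemma~\ref{lem-coercivity-Psi}. Every remainder generated by the cutoff $\Psi$ will be an integral against a factor supported in the set $\{|x+X(t)|\lesssim 1\}$; by Lemma~\ref{compac-supp-argu}, each such remainder with $k$ factors of $Q$ decays like $e^{-k|X|}/|X|^k$. Expanding $\|\uu\|_{L^2(\R^3)}^2 = \|Q\|_{L^2(\R^3)}^2$ via \eqref{decom-u-trans}, using $\im\int Q(x)\Psi(x+X)\,\bh(x+X)\,dx=0$ and $\|Q\,\Psi(\cdot+X)\|_{L^2}^2 = \|Q\|_{L^2}^2 + O(\ee)$, one gets the mass identity
\begin{equation*}
\rho\,\|Q\|_{L^2}^2 + \int Q(x)\Psi(x+X)\,\bh_1(x+X)\,dx = O\bigl(\rho^2 + \|h\|_{H^1_0(\Omega)}^2 + \tfrac{e^{-2|X|}}{|X|^2}\bigr). \qquad(\mathrm{A})
\end{equation*}

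Next, expanding $L[\uu]-L[Q]=0$, using Pohozhaev ($\|\nabla Q\|^2=3\|Q\|^2$, $\|Q\|_{L^4}^4=4\|Q\|^2$) and the orthogonality $\re\int\Delta(Q\,\Psi_X)\bh_X\,dx=0$ (integrated by parts to annihilate $\re\int\nabla(Q\Psi_X)\cdot\nabla\bh_X$), the linear-in-$\rho$ contribution cancels (coefficient $3-4+1=0$) and the quadratic-in-$\rho$ coefficient equals $-4\|Q\|_{L^2}^2$. The quadratic-in-$h$ terms reassemble exactly into $\Phi_X(\bh_X)$, while the linear-in-$h$ residue is $(1+\rho)\int\bigl(Q\Psi_X-(1+\rho)^2Q^3\Psi_X^3\bigr)\bh_{1X}\,dx$. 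Using $\Delta Q=Q-Q^3$ one writes
\begin{equation*}
Q\Psi_X - Q^3\Psi_X^3 = (\Delta Q)\,\Psi_X + Q^3\Psi_X(1-\Psi_X^2),
\end{equation*}
and then $(\Delta Q)\Psi_X = \Delta(Q\Psi_X) - 2\nabla Q\cdot\nabla\Psi_X - Q\,\Delta\Psi_X$: the $\Delta(Q\Psi_X)$ piece vanishes by orthogonality, and the other terms are supported near $|x+X|\sim 1$, contributing $O(\e\,\|h\|_{H^1_0(\Omega)})$ by Lemma~\ref{compac-supp-argu}. Collecting gives the energy identity
\begin{equation*}
\Phi_X(\bh_X) = 4\rho^2\|Q\|_{L^2}^2 + O\bigl(|\rho|\,\|h\|_{H^1_0} + \|h\|_{H^1_0}^3 + \e\,\|h\|_{H^1_0} + \ee\bigr). \qquad(\mathrm{B})
\end{equation*}

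By Lemma~\ref{lem-coercivity-Psi}, $\Phi_X(\bh_X) \geq c\|h\|_{H^1_0(\Omega)}^2$. Substituting into (B) and absorbing the cross terms $|\rho|\|h\|$ and $\e\,\|h\|$ by Young's inequality, together with $\|h\|^3$ (using smallness of $\delta_0$), into $\tfrac{c}{2}\|h\|^2$, yields $\|h\|_{H^1_0(\Omega)} \lesssim |\rho| + \e$. Plugging this into (A) gives $\rho\,\|Q\|^2 = -\int Q\Psi_X\,\bh_{1X} + O(\rho^2+\ee)$, hence $|\rho|\approx\bigl|\int Q\Psi_X\,\bh_{1X}\bigr| + O(\ee)$ for small $|\rho|$. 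The expansion $\|\nabla\uu\|^2 = (1+\rho)^2\|\nabla Q\|^2 + \|\nabla h\|^2 + O(\ee)$ (using the gradient orthogonality again) together with $\|\nabla h\|^2\lesssim\rho^2+\ee$ then gives $\delta(t) = 2|\rho|\,\|\nabla Q\|^2 + O(\rho^2+\ee)$, so $\delta(t)\approx|\rho| + O(\ee)$. The reverse bound $|\rho|\lesssim \|h\|_{H^1_0} + \e$ follows from (A) by Cauchy--Schwarz. This yields the four equivalences in \eqref{equiv}.

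The main difficulty, absent from the purely Euclidean analysis in \cite{DuRo10}, is the boundary-type effect of the cutoff $\Psi$. Each polynomial expansion around $Q\Psi_X$ rather than $Q$ produces additional terms with factors $\Psi_X-1$, $\nabla\Psi_X$, or $1-\Psi_X^2$, all supported in the compact annulus $|x+X(t)|\sim 1$, where $Q$ is exponentially small. The number of $Q$-factors in each such term dictates the power $e^{-k|X|}/|X|^k$ in its contribution, and tracking these exponents carefully is what produces the asymmetric errors in the statement: the sharper $\ee$ between $|\rho|$, $\delta$, and $\bigl|\int Q\Psi_X\,\bh_{1X}\bigr|$, versus the weaker $\e$ for $\|h\|_{H^1_0(\Omega)}$, the latter being an artifact of taking a square root of the coercivity inequality.
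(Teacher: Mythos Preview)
Your proof is correct and follows essentially the same strategy as the paper: mass conservation gives identity (A) (the paper's Step~1), the gradient expansion gives the $\delta\approx|\rho|$ relation (the paper's Step~2), and the coercivity of $\Phi_X$ closes the bound on $\|h\|_{H^1_0}$ (the paper's Step~4). The one cosmetic difference is that you expand the action $L=E+\tfrac{1}{2}M$ directly, exploiting that $Q$ is a critical point of $L$ so the linear-in-$\rho$ term vanishes at once, whereas the paper expands $E$ and $M$ separately and then combines them (its Steps~3--4); this is a clean repackaging but not a different argument.
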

\begin{proof}
Let  $\dt(t)= | \rho(t) | + \left\| \bh \right\|_{H^1} + \delta(t)$, which is small, if $\delta(t)$ is small. 
By the expansion of $u$ in \eqref{decom-u-trans} we have $e^{-i\theta(t)-it} \uu(t, x+X(t))=(1+\rho (t))Q(x)\Psi_{_X}(x)+\th(t, x),$ thus, if $x+X(t) \in \Omega,$ then $ \uu(t,x+X(t))=u(t,x+X(t)),$ otherwise $\uu(t,x+X(t))=0.$  \\
 \begin{itemize}
\item \underline{\bf Step 1:} Approximation of $|\rho|$ using the mass conservation. \\  

Since $M[u]=M_{\R^3}[\uu]=M_{\R^3}[Q\Psi_{_X}+ \rho Q\Psi_{_X} + \th]=M_{\R^3}[Q]$, we have,
\begin{equation}
\label{M(Q+g)}
\int \bigg( Q^2( \Psi^2_{X}-1)+ 2 \rho\,  Q^2 \Psi^2_{X}+  2 \rho \,Q\Psi_{_X} \thre + \rho^2 Q^2 \Psi^2_{X} + 2  Q\Psi_{_X} \thre + |\th|^2 \bigg) dx= 0 .
\end{equation}

Using \eqref{M(Q+g)} and Lemma \ref{compac-supp-argu}, we obtain  
\begin{align*}
2 | \rho | \left|  \int  \; Q^2 \, \Psi^2_{_X} \right| &= \bigg| 2 \int Q \Psi_{_X} \thre + \int Q^2 ( \Psi_{_X}^2 -1 ) + 2 \rho \int  Q \Psi_{_X} \thre  + \rho^2 \int Q^2 \Psi_{_X}^2 + \int | \th |^2   \bigg| \\
&=   2  \left| \int Q\Psi_{_X} \thre \, \right|    + O\left(  \dt^2+ \ee \right),
\end{align*}

which yields
 \begin{equation}
\label{rho+Qh1}
| \rho | = \frac{1}{M[Q]}    \left| \int Q\Psi_{_X} \thre \, dx \right|    + O\left(  \dt^2+ \ee \right).
\end{equation}
\item \underline{\bf Step 2:} Approximation of $|\rho|$ in terms of $\delta$.\\ 

By the definition of $\delta(t)$, we have
\begin{align*}
\delta(t)&= \left| \int \left| \nabla(Q\Psi_{_X}+\rho Q\Psi_{_X}+\th) \right|^2 dx - \int \left| \nabla Q \right|^2 dx \right|  \\
            &= \bigg| \int \left| \nabla(Q\Psi_{_X}) \right|^2 + 2 \rho \left| \nabla (Q\Psi_{_X} ) \right|^2 + \rho^2 \left| \nabla(Q\Psi_{_X}) \right|^2 + 2 \rho \nabla (Q\Psi_{_X}) \cdot \nabla \thre  \\ &+ 2 \nabla (Q\Psi_{_X}) \cdot \nabla \thre + \left| \nabla \th \right|^2 -\int \left| \nabla Q \right|^2  \bigg| .
 \end{align*}
 Using integration by parts and the orthogonality condition \eqref{rho-ortho-DQh}, we get
 \begin{multline*}
         \delta(t)   = \bigg| \int \left| \nabla Q \right|^2 (\Psi_{_X}^2-1)  + 2\,   \nabla Q \cdot \nabla\Psi_{_X}   Q\Psi_{_X} +  Q^2 \left| \nabla\Psi_{_X} \right|^2   \\
          + (2\rho +\rho^2) \int  \left| \nabla(Q\Psi_{_X}) \right|^2    +  \int   \left| \nabla \th \right| ^2  \bigg|             .
\end{multline*}
Using the fact that $(\Psi^2 -1)$ and  $\nabla\Psi$ have compact supports and applying Lemma \ref{compac-supp-argu}, we get
  \begin{equation}
\label{rho+delta}
\left| \rho \right|= \frac{\delta}{2 \left\| \nabla Q \right\|_{L^2(\R^3)}^2}  + O\left(\dt^2 + \ee \right).
\end{equation}
\item \underline{\bf Step 3:} Energy and Mass conservation. \\

We define: $g=\rho Q\Psi_{_X} + \th  .$ Since $ E_{\R^3}[\uu]=E_{\R^3}[Q\Psi_{_X}+g]=E_{\R^3}[Q],$ we have
\begin{align}
\label{A_0+A_L}
    \frac 12 \int \left| \nabla(Q\Psi_{_X}) \right|^2 &- \frac 12 \int \left| \nabla Q \right|^2 - \frac 14 \int Q^4\Psi_{_X}^4 + \frac 14 \int Q^4+ \int \nabla(Q\Psi_{_X}) \cdot \nabla g_1 - \int Q^3\Psi_{_X}^3 \,  g_1 \\
\label{ANL}
     &+ \frac 12 \int \left| \nabla g \right|^2  - \frac 12 \int Q^2\Psi_{_X}^2(3 g_1^2+g_2^2) - \int Q\Psi_{_X} |g|^2g_1 - \frac 14 |g|^4 =0 .
\end{align}
First, we estimate \eqref{A_0+A_L}. For that we denote
\begin{align*}
A_0&=   \frac 12 \int \left| \nabla(Q\Psi_{_X}) \right|^2 - \frac 12 \int \left| \nabla Q \right|^2 - \frac 14 \int Q^4\Psi_{_X}^4 + \frac 14 \int Q^4,\\
A_L(g)&=  \int \nabla(Q\Psi_{_X}) \cdot \nabla g_1 - \int Q^3\Psi_{_X}^3 g_1. \\
\end{align*}
In this step, we estimate $A_0$ and $A_L(g)$. Using the fact that $\nabla\Psi, (\Psi^2-1)$ and $(\Psi^4-1)$ have compact supports and Lemma~\ref{compac-supp-argu}, we have
\begin{equation}
\label{A_0-esti}
A_0 =O\left( \ee \right). 
\end{equation}
Next, we show that 
\begin{multline}
\label{A_L-estim}
A_L(g)= \frac 12 \int |g|^2 -  2 \int \nabla Q. \nabla\Psi_{_X}  \, g_1 - \int Q \Delta\Psi_{_X}  \, g_1  - \int Q^3\Psi_{_X} (\Psi_{_X}^2-1) g_1  \\  + O\left(\ee\right). 
\end{multline}
Integrating by parts, we obtain
\begin{align*}
   \int \nabla(Q\Psi_{_X}) \cdot \nabla g_1 &= - \int \Delta(Q\Psi_{_X}) g_1 = -\int \Delta Q \,\Psi_{_X} g_1 - 2 \int \nabla Q \cdot \nabla\Psi_{_X} g_1 - \int Q \Delta\Psi_{_X} g_1, \\
   - \int Q^3\Psi_{_X}^3 g_1&= -\int Q^3\Psi_{_X}  \, g_1 - \int Q^3\Psi_{_X}(\Psi_{_X}^2-1) g_1.
\end{align*}
Using the equation \eqref{equQ} for $Q,$ we deduce
\begin{align*}
A_{L}(g)=  - \int Q\Psi_{_X}  g_1 - 2 \int \nabla Q \cdot \nabla\Psi_{_X} g_1 - \int Q \Delta\Psi_{_X}  \, g_1 - \int Q^3\Psi_{_X}(\Psi_{_X}^2-1) g_1.
\end{align*}

Since $M[u]=M[\uu]=M[Q\Psi_{_X}+g]=M[Q],$ we have
\begin{align*}
    \int Q^2 (\Psi_{_X}^2-1) + 2 \int Q\Psi_{_X}  g_1 + \int |g|^2 =0,    \\ 
    -\int Q\Psi_{_X} g_1= \frac 12 \int |g|^2 + O\left( \ee \right).
\end{align*}
This implies \eqref{A_L-estim}.

\item \underline{\bf Step 4:} Approximation of $\left\| h \right\|_{H^1_0(\Omega)}.$ \\

Recall that $g=\rho\,  Q\Psi_{_X}+ \th .$ In this step we prove 
$$    \left\|h \right\|_{H^1_0(\Omega)}=  O\left( |\rho|+ \dt^{\frac 32}+ \e  \right).$$

Summing up all terms  \eqref{ANL}, \eqref{A_0-esti} and \eqref{A_L-estim}, we obtain 
\begin{align*}
%A_0+A_L(g)+A_{NL}(g)
&\frac 12 \int |\rho \, Q\Psi_{_X} + \th |^2 - 2 \int \nabla Q \cdot \nabla\Psi_{_X} (\rho \, Q\Psi_{_X}+\thre) - \int Q \Delta\Psi_{_X} (\rho\, Q\Psi_{_X}+\thre)\\ 
&   - \int Q^3\Psi_{_X}(\Psi_{_X}^2-1) ( \rho Q\Psi_{_X} + \thre) +  \frac 12 \int \left| \nabla ( \rho\, Q\Psi_{_X}+\th ) \right|^2 - \frac 12 \int Q^2\Psi_{_X}^2 ( 3(\rho \, Q\Psi_{_X}+\thre)^2+\thima^2) 
\\ &- \int Q\Psi_{_X} | \rho\, Q\Psi_{_X}+\th |^2( \rho \, Q\Psi_{_X}+\thre) - \frac 14 \int | \rho \, Q\Psi_{_X}+\th|^4  =O\left(\ee\right).
\end{align*}
Denote 
\begin{align*}
B_L(\bh)&=- 2 \int \nabla Q .\nabla\Psi_{_X} (\rho \, Q\Psi_{_X}+\thre)- \int Q \Delta\Psi_{_X} (\rho\, Q\Psi_{_X}+\thre) \\
& - \int Q^3\Psi_{_X}(\Psi_{_X}^2-1) ( \rho Q\Psi_{_X} + \thre),\\
B_{NL}^1(\bh)&=\frac 12 \int |\rho \, Q\Psi_{_X} + \th |^2   +  \frac 12 \int \left| \nabla ( \rho\, Q\Psi_{_X}+\th ) \right|^2, \\ 
B_{NL}^2(\bh)&= - \frac 12 \int Q^2\Psi_{_X}^2 ( 3(\rho \, Q\Psi_{_X}+\thre)^2+\thima^2)- \int Q\Psi_{_X} | \rho\, Q\Psi_{_X}+\th |^2( \rho \, Q\Psi_{_X}+\thre) \\
&- \frac 14 \int | \rho \, Q\Psi_{_X}+\th|^4.
\end{align*}
Next, we estimate each term. Using the fact that $\nabla\Psi, \; \Delta \Psi$ and $(\Psi^2-1)$ have compact supports and Lemma \ref{compac-supp-argu}, we obtain 
 \begin{align*}
B_L(\bh)&= -  \int (2 \nabla Q \cdot \nabla\Psi_{_X} + Q \Delta\Psi_{_X}) (\rho \, Q\Psi_{_X}+\thre) - \int Q^3\Psi_{_X}(\Psi_{_X}^2-1) ( \rho Q\Psi_{_X} + \thre) \\  
&= O\left( |\rho| \ee + \e \left\| \bh \right\|_{H^1} \right)+  O\left(|\rho| \eeee + \left\| \bh \right\|_{H^1} \eee \right) .
\end{align*}
Using the orthogonality condition \eqref{rho-ortho-DQh}, we get 
\begin{align*}
B_{NL}^1(\bh)&=\frac{\rho^2}{2} \int  Q^2\Psi_{_X}^2 + \rho \int Q\Psi_{_X}  \, \thre + \frac 12 \int |\th|^2+\frac{ \rho^2}{2} \int \left| \nabla(Q\Psi_{_X} ) \right|^2 + \rho  \int \nabla( Q\Psi_{_X}) \cdot \nabla \thre  \\
&+ \frac 12 \int |\nabla \th |^2  =\rho \int Q\Psi_{_X} \thre + \frac 12 \int |\bh|^2  + \frac 12 \int |\nabla \bh |^2+  O(|\rho|^2),
\end{align*}
\begin{align*}
B_{NL}^2(\bh)&= - \frac 12 \int Q^2\Psi_{_X}^2( 3\thre^2+\thima^2)- \frac 14 \int |\th|^4- \rho \int Q\Psi_{_X} |\th|^2 \thre- \int Q\Psi_{_X} |\th|^2 \thre \\& - \frac{ \rho^2}{2} \int Q^2\Psi_{_X}^2 ( 3\thre^2+\thima^2)- \rho \int Q^2\Psi_{_X}^2 |\th|^2 - 2 \rho \int Q^2\Psi_{_X}^2 \,  \thre^2 - \rho^3 \int Q^3\Psi_{_X}^3  \, \thre \\
&- 3 \rho^2 \int Q^3\Psi_{_X}^3 \, \thre -3 \rho \int Q^3\Psi_{_X}^3  \, \thre - \frac{ \rho^4}{4} \int Q^4\Psi_{_X}^4 -\rho^3 \int Q^4\Psi_{_X}^4 - \frac{ 3 \rho^2}{2} \int Q^4\Psi_{_X}^4 .
 \end{align*}
By the equation \eqref{eq_Q} and using again the orthogonality condition \eqref{rho-ortho-DQh}, we have 
\begin{align*}
-3\rho \int Q^3\Psi_{_X}^3 \,\thre  &= -3 \rho \int Q\Psi_{_X} \thre -3 \rho \int (Q-\Delta Q) \, \Psi_{_X}^2(\Psi_{_X}-1)\thre 
  -6 \rho \int \nabla Q . \nabla\Psi_{_X}  \, \thre \\ &- 3 \rho \int \Delta\Psi_{_X} Q \,  \thre  \\&=-3 \rho \int Q\Psi_{_X} \,  \thre + O\left( |\rho| \e \left\| \bh \right\|_{H^1} \right). 
\end{align*}
Using the facts that 
\begin{align*}
&   \rho \int Q\Psi_{_X} |\th|^2 \thre=O(|\rho| \left\| \bh \right\|_{H^1}^3   ) , \\
&   \frac{\rho^2}{2} \int Q^2\Psi_{_X}^2 ( 3\thre^2+\thima^2)- \rho \int Q^2\Psi_{_X}^2 |\th|^2 - 2 \rho \int Q^2\Psi_{_X}^2 \thre^2=O(|\rho|^2 \left\| \bh \right\|_{H^1}^2+ |\rho| \left\| \bh \right\|_{H^1}^2) ,\\ 
&  -  \rho^3 \int Q^3\Psi_{_X}^3 \thre - 3 \rho^2 \int Q^3\Psi_{_X}^3 \thre =O(|\rho|^3 \left\| \bh \right\|_{H^1}+ |\rho|^2 \left\| \bh \right\|_{H^1}) , \\
&\text{and} \\ 
& - \frac{ \rho^4}{4} \int Q^4\Psi_{_X}^4 -\rho^3 \int Q^4\Psi_{_X}^4 - \frac{ 3 \rho^2}{2} \int Q^4\Psi_{_X}^4 =O(|\rho|^4+|\rho^2|),
\end{align*}
we obtain
\begin{align*}
B_{NL}^2&= - \frac 12 \int Q^2\Psi_{_X}^2 ( 3\thre^2+\thima^2) - \int Q\Psi_{_X} |\th|^2\thre   - \frac 14 \int |\bh|^4 -3\rho \int Q\Psi_{_X} \thre \\ &+ O\left( |\rho| \left\| \bh \right\|_{H^1}^2+ +  |\rho| \e \left\| \bh \right\|_{H^1}   + |\rho|^2  \right). 
\end{align*}

Thus, 
\begin{multline}
\label{sum-BL_BNL}
B_L(\bh)+B_{NL}^1(\bh)+B_{NL}^2(\bh)= \frac 12 \int |\bh|^2  - \frac 12 \int Q^2\Psi_{_X}^2 ( 3\thre^2+\thima^2)  + 
\frac 12 \int |\nabla \bh |^2 \\ \qquad  \qquad  \qquad  \qquad  \qquad   \quad  - \frac 14 \int |\bh|^4 
- \int Q\Psi_{_X} |\th|^2\thre  -2\rho \int Q\Psi_{_X} \thre \\= O\bigg( |\rho| \left\| \bh \right\|_{H^1}^2+ |\rho|^2   +  \ee+   \e \left\| \bh \right\|_{H^1} \bigg).  \quad \;     
\end{multline}

Recall that, from \eqref{Quad-form-Phi} we have
 $$\Phi_{X}(\bh)= \frac 12 \int | \nabla \bh |^2 - \frac 12 \int Q^2\Psi_{_X}^2 (3\bh_1^2+\bh_2^2)+ \frac{1}{2} \int |\bh|^2. $$
By \eqref{sum-BL_BNL}, one can see that, 
\begin{align*} \Phi_{X}(\th)&= \frac 14 \int |\bh|^4 + \int Q\Psi_{_X} |\th|^2\thre +2\rho \int Q\Psi_{_X} \thre \\ &+ O\left( |\rho| \left\| \bh \right\|_{H^1}^2+ |\rho|^2 +  \ee+   \e \left\| \bh \right\|_{H^1}  \right).
\end{align*}
Thus, 
\begin{align*}
 \left| \Phi_{X}(\th) \right|   &\leq C \bigg( \left\| \bh \right\|_{H^1}^3 + 2 | \rho|  \left| \int Q\Psi_{_X} \thre  \right|+|\rho|^2 +  \ee+   \e \left\| \bh \right\|_{H^1}     \bigg) .
\end{align*}
By the coercivity property \eqref{coercivity-Psi}, we obtain
$$\left\| \bh \right\|_{H^1}=O\left( |\rho|+ \dt^{\frac 32}+ \e + \left| \int Q\Psi_{_X} \thre \right| \right)  .$$
By \eqref{rho+Qh1}, we deduce
\begin{equation}
    \label{estimate-on-h}
   \left\|h \right\|_{H^1_0(\Omega)}= \left\| \bh \right\|_{H^1(\R^3)}=  O\left( |\rho|+ \dt^{\frac 32}+ \e   \right),
\end{equation}
and thus, by \eqref{rho+delta}, we get
$$ \dt=O\left( | \rho|+ \e \right),$$ which implies \eqref{equiv} and concludes the proof of Proposition \ref{lem-equiv-modu-param}. 
\end{itemize}
\end{proof}

\begin{lem}
\label{derv-of-mod-par}
Under the assumptions of Proposition \ref{lem-equiv-modu-param}, for all $t \in D_{\delta_0},$ we have  
\begin{equation}
    |\rho{'}(t)|+ |X{'}(t)|+ |\theta{'}(t)|=O\left(\delta + \e \right).
\end{equation}
\end{lem}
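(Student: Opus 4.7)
This is a standard modulation-derivative argument. I would differentiate in $t$ each of the five orthogonality conditions satisfied by $h$ (one from \eqref{rho-ortho-DQh}, one from \eqref{modu-ortho-dQh-im}, three from \eqref{modu-ortho-dQh}), substitute the evolution equation for $h$ derived from \Nls, and solve the resulting $5\times 5$ linear system for $(\rho'(t),X'(t),\theta'(t))$.

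Concretely, setting $v:=e^{-i(\theta(t)+t)}u(t)$, the equation \Nls becomes
\begin{equation*}
i\partial_t v+\Delta v-(1+\theta'(t))\,v+|v|^{2}v=0.
\end{equation*}
Plugging in $v(t,x)=(1+\rho(t))\,Q(x-X(t))\,\Psi(x)+h(t,x)$ and using $\Delta Q-Q+Q^{3}=0$, a direct computation yields a schematic identity
\begin{equation*}
i\partial_t h=-\Delta h+h-i\rho'(t)\,Q(x-X)\Psi+i(1+\rho)X'(t)\!\cdot\!(\nabla Q)(x-X)\Psi+\theta'(t)\,v-\mathcal{L}(h)-\mathcal{N}(h)-\mathcal{E}_{\Psi},
\end{equation*}
where $\mathcal{L}(h)$ is the linearization of $|v|^{2}v$ at $(1+\rho)Q(x-X)\Psi$ (linear in $h$), $\mathcal{N}(h)$ is the strictly nonlinear remainder (quadratic and cubic in $h$), and $\mathcal{E}_{\Psi}$ collects the cutoff commutators $(1+\rho)\bigl[2\nabla Q(x-X)\!\cdot\!\nabla\Psi+Q(x-X)\Delta\Psi\bigr]$ and $(1+\rho)^{3}Q(x-X)^{3}\Psi(\Psi^{2}-1)$, all supported in a fixed bounded region away from $X(t)$.

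Pairing this identity with the five test functions $iQ(x-X)\Psi$, $\partial_{x_{k}}(Q(x-X)\Psi)$ ($k=1,2,3$) and $\Delta(Q(x-X)\Psi)$, and using that the orthogonality conditions are identically zero in $t$ (so their derivatives vanish), yields a linear system
\begin{equation*}
\mathcal{M}(t)\bigl(\rho'(t),\,X_{1}'(t),\,X_{2}'(t),\,X_{3}'(t),\,\theta'(t)\bigr)^{T}=\mathcal{R}(t).
\end{equation*}
The matrix $\mathcal{M}(t)$ is essentially the Jacobian computed in Steps~1--2 of the proof of Lemma~\ref{modulation}: up to an $O(e^{-2|X|}/|X|^{2})$ perturbation it is diagonal, with diagonal entries of sizes $\|Q\|_{L^{2}(\R^{3})}^{2}$ and $\|\nabla Q\|_{L^{2}(\R^{3})}^{2}$, hence uniformly invertible for $|X(t)|$ large. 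The right-hand side $\mathcal{R}(t)$ splits into three parts: (i) the linear-in-$h$ contributions from $\Delta h-h$ and from $\mathcal{L}(h)$, bounded after integration by parts by $\|h\|_{H^{1}_{0}(\Omega)}=O(\delta+e^{-|X|}/|X|)$ via Proposition~\ref{lem-equiv-modu-param}; (ii) the nonlinear contributions $\mathcal{N}(h)$, bounded by $\|h\|_{H^{1}_{0}(\Omega)}^{2}$ and absorbed; and (iii) the cutoff contribution $\mathcal{E}_{\Psi}$, bounded by $O(e^{-|X|}/|X|)$ using the compact support of $\nabla\Psi$, $\Delta\Psi$, $\Psi^{2}-1$ together with the exponential decay of $Q$ via Lemma~\ref{compac-supp-argu}. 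Inverting $\mathcal{M}(t)$ then gives the desired estimate.

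\textbf{Main obstacle.} The delicate technical point, not present in the Euclidean analysis of \cite{DuRo10}, is the systematic control of the $\Psi$-commutator error $\mathcal{E}_{\Psi}$ and of the exponentially small off-diagonal corrections to $\mathcal{M}(t)$ (the analogous matrix on $\R^{3}$ being literally diagonal by radial symmetry of $Q$). Both are handled uniformly via Lemma~\ref{compac-supp-argu}, by the same mechanism already employed in the proofs of Lemma~\ref{modulation} and Proposition~\ref{lem-equiv-modu-param}.
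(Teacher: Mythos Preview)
Your proposal is correct and is essentially the same argument as the paper's proof: derive the evolution equation for $h$ from \Nls, pair it with the five test functions $Q(x-X)\Psi$, $\partial_{x_k}(Q(x-X)\Psi)$, $\Delta(Q(x-X)\Psi)$, differentiate the orthogonality conditions, and solve for $(\rho',X',\theta')$, controlling the cutoff errors via Lemma~\ref{compac-supp-argu}. The only cosmetic difference is that the paper phrases the closing step as a bootstrap ($\ds:=\delta+|\rho'|+|X'|+|\theta'|=O(\delta+e^{-|X|}/|X|+\ds(\delta+e^{-|X|}/|X|))$, then absorb) rather than as inversion of a perturbed diagonal matrix, but the two are equivalent.
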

\begin{proof}
Let $\ds(t):= \delta(t)+ |\rho{'}(t)|+ |X{'}(t)|+ |\theta{'}(t)|. $ Using the \NNls equation, Lemma \ref{compac-supp-argu}, Proposition \ref{lem-equiv-modu-param} and the Sobolev embedding $H^1_0(\Omega) \subset L^6(\Omega)$, we obtain
\begin{multline}
\label{eq-h+modu-para}
    i \partial_{t} h + \Delta h + i \rho' Q_{_{-X} }\Psi-i X' \cdot \nabla Q_{_{-X}} \Psi - \theta' \, Q_{_{-X}} \Psi \\= O \left(\delta + \e + \ds (\delta + \e) \right) \; \text{in}\; L^2 .
\end{multline}
By the orthogonality conditions \eqref{rho-ortho-DQh}, \eqref{modu-ortho-dQh-im}, \eqref{modu-ortho-dQh} and Proposition \ref{lem-equiv-modu-param}, we have 
\begin{equation}
\label{dt-modu-ortho-dQh-im}
\im \int_{\Omega} \partial_t h \,  Q_{_{-X}} \Psi dx= \im \int_{\Omega} h \, X^{'} \cdot \nabla Q_{_{-X}} \Psi \, dx =O\left( \ds (\delta +  \e ) \right),
\end{equation}
\begin{multline}
\label{dt-modu-ortho-dQh}
\re \int_{\Omega} \partial_t h\,  \partial_{x_k} (Q_{_{-X}} \Psi) dx=\sum_{j=1}^3 \re \int_{\Omega} h \, X^{'}_j   (\partial_{x_k}( \partial_{x_j} Q \Psi) )\, dx \\ =O\left( \ds (\delta +  \e ) \right), \; k=1,2,3,
\end{multline}
\begin{multline}
\label{dt-rho-ortho-DQh}
\re \int_{\Omega}  \partial_t h \,  \Delta (Q_{_{-X}} \Psi)  dx= \sum_{j=1}^3 \re \int_{\Omega} h \, X^{'}_j \Delta (\partial_{x_j} Q_{_{-X}} \Psi) \, dx = O\left( \ds (\delta +  \e ) \right).
\end{multline}
Multiplying \eqref{eq-h+modu-para} by $Q_{_{-X}}\Psi,$ integrating the real part, using \eqref{dt-modu-ortho-dQh-im} and then integrating by parts, we get 
\begin{equation}
    \label{derv-t-theta}
    |\theta'|= O \left(\delta + \e + \ds (\delta +  \e ) \right) .
\end{equation}
Similarly, multiplying \eqref{eq-h+modu-para} by $\partial_{x_j}(Q_{_{-X}}\Psi),\, j\in{1,2,3},$ integrating the imaginary part, using \eqref{dt-modu-ortho-dQh} and Proposition \ref{lem-equiv-modu-param},  we obtain 
\begin{equation}
\label{derv-t-X}
    |X_j'(t)|= O \left(\delta + \e+ \ds (\delta + \e) \right), \quad  \; j=1,2,3.
\end{equation}

Multiplying \eqref{eq-h+modu-para} by $\Delta(Q_{_{-X}}\Psi),$ integrating the imaginary part, and using \eqref{dt-rho-ortho-DQh} and Proposition \ref{lem-equiv-modu-param}, we get 
\begin{equation}
    \label{derv-t-Rho}
    |\rho'|= O \left(\delta + \e+ \ds (\delta + \e ) \right).
\end{equation}

Summing up \eqref{derv-t-theta}, \eqref{derv-t-X} and \eqref{derv-t-Rho}, we obtain 
$$\ds=O\left(\delta + \e + \ds (\delta + \e) \right) ,$$
which concludes the proof by choosing $\delta_0$ sufficiently small. 
\end{proof}

\section{Scattering}
\label{sec-scattering}
In this section, we prove Theorem \ref{main-theo}. We start by proving, in \S \ref{Compactness properties}, that the extension $\uu$ of a non-scattering solution $u(t)$ to the \NNls equation, satisfying \eqref{u<Qthro} and \eqref{MEuu=MEQQTheo}, is compact in $H^1(\R^3)$, up to a spatial translation parameter $x(t).$ In \S \ref{sec-bddxx}, we prove that $x(t)$ is bounded using an auxiliary translation parameter (obtained by ignoring the obstacle), a local virial identity and the estimates from Section \ref{Sec-Modulation} for the modulation parameters. In \S \ref{Sec-convergence-mean}, we prove that the parameter $\delta(t)$ converges to $0$ in mean. Finally, combining the compactness properties with the control of the space translation parameter $x(t)$ and the convergence in mean, we obtain a contradiction from the existence of a non-scattering solution, thus, concluding the proof of Theorem \ref{main-theo}.

\subsection{Compactness properties}
\label{Compactness properties}
\begin{prop}
\label{compac-prop}
Let $u(t)$ be a solution of (NLS$_\Omega$) such that
\begin{equation}
    M[u]=M_{\R^3}[Q], \quad E[u]=E_{\R^3}[Q] \quad \text{and} \quad  \left\| u_0 \right\|_{L^2(\Omega)} < \left\| \nabla Q \right\|_{L^2(\R^3)},
\end{equation}
which does not scatters in positive time. Then there exists a continuous function $x(t)$ such that \begin{equation}
\label{def-K}
    K= \{ \uu(x+x(t),t),\, t \in [0, + \infty) \} 
\end{equation}
has a compact closure in $H^1(\R^3).$
\end{prop}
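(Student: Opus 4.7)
The plan is to implement the Kenig--Merle concentration-compactness machinery in the exterior setting, using the linear profile decomposition (Theorem~\ref{profile decom}), the sub-threshold scattering results (Theorem~\ref{theo-killip-all} and Theorem~\ref{contruction of v_n if d(x_n)}), and the stability lemma (Lemma~\ref{stability}). Fix an arbitrary sequence $t_n \to +\infty$; the bound $\|u(t_n)\|_{L^2}\|\nabla u(t_n)\|_{L^2} < \|Q\|_{L^2}\|\nabla Q\|_{L^2}$ propagates from the analogous assumption at $t=0$ via the Gagliardo--Nirenberg inequality, energy conservation and a standard continuity argument, so $\{u(t_n)\}$ is bounded in $H^1_0(\Omega)$. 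Applying Theorem~\ref{profile decom} to this sequence produces profiles $\phi^j$ with parameters $(x_n^j, t_n^j)$ belonging to Case~1 or Case~2, and a remainder $\omega_n^J$ with vanishing linear Strichartz norm as $J,n \to \infty$.

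Next, I would exploit the Pythagorean decoupling of mass and energy together with the threshold assumption $M[u]E[u]=M_{\R^3}[Q]E_{\R^3}[Q]$. The sub-threshold gradient condition transfers to each profile (by orthogonality of the profile parameters and Gagliardo--Nirenberg applied to each piece), and hence every $\phi^j$ satisfies $\|\nabla\phi^j\|_{L^2}\|\phi^j\|_{L^2} \le \|\nabla Q\|_{L^2}\|Q\|_{L^2}$, which makes $E[\phi^j] \ge 0$. The decoupling identities
\begin{equation*}
\sum_{j=1}^J M[\phi^j] + M[\omega_n^J] = M[u] + o_n(1), \qquad \sum_{j=1}^J E[\phi^j] + E[\omega_n^J] = E[u] + o_n(1),
\end{equation*}
combined with the threshold equality and non-negativity force at most one profile to saturate $M[\phi^{j_0}]E[\phi^{j_0}]=M[Q]E[Q]$, with all other profiles and the remainder then having vanishing mass-energy product in the limit.

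I would then dispose of the purely sub-threshold case by contradiction: if every profile is strictly sub-threshold, Theorem~\ref{theo-killip-all} handles the Case~1 profiles and Theorem~\ref{contruction of v_n if d(x_n)} handles the Case~2 profiles, producing globally scattering nonlinear profiles $v_n^j$ with uniform $L^5_{t,x}$ bounds. The approximation $\tilde u_n := \sum_{j=1}^J v_n^j + e^{i(t-t_n)\Delta_\Omega}\omega_n^J$ satisfies the approximate equation with error controlled by the asymptotic orthogonality \eqref{asymptotic ortho para} of the parameters; Lemma~\ref{stability} then gives $\|u\|_{L^5_{t,x}([t_n,\infty)\times\Omega)}<\infty$, contradicting the non-scattering hypothesis. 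Hence there is exactly one surviving profile $\phi^{1}$ saturating the threshold, all other profiles vanish, and $\omega_n \to 0$ in $H^1_0(\Omega)$. Setting $x(t_n) := -x_n^1$ (with $x_n^1 = 0$ in Case~1), the extensions $\uu(t_n, \cdot + x(t_n))$ converge along a subsequence in $H^1(\R^3)$; since the sequence $t_n$ was arbitrary, the set $K$ is precompact.

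It remains to upgrade the selection $t \mapsto x(t)$ to a continuous function. Continuity of the flow $t \mapsto u(t) \in H^1_0(\Omega)$ combined with the precompactness just established implies that for every $t$ there is a non-empty compact set $\Xi(t)\subset \R^3$ of admissible translations, and that this set-valued map varies upper-semicontinuously; a standard selection-and-modification argument (piecewise-constant choice on a fine partition, interpolated by small continuous corrections) produces the continuous representative. The main obstacle I anticipate is the treatment of Case~2 profiles with $|x_n^1|\to\infty$: one must verify that the nonlinear profile, even though nominally living on $\R^3$, still yields the correct approximation to $u(t_n)$ on $\Omega$ despite the Dirichlet cutoff $\chi_n$, and that the threshold-saturating profile on $\R^3$ cannot be discarded in favor of smaller pieces. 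This is essentially the reason the paper's subsequent sections have to work hard to control $x(t)$ without Galilean invariance.
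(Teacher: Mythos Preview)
Your overall architecture matches the paper's: profile decomposition, Pythagorean splitting, sub-threshold scattering of the nonlinear profiles, and stability to reach a contradiction when more than one profile (or a nontrivial remainder) survives. That part is fine and essentially what the paper does in its Scenario~I.

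There is, however, a genuine gap at the end. After isolating a single profile $\phi^1$ with $\omega_n^1\to 0$ in $H^1_0(\Omega)$, you immediately set $x(t_n):=-x_n^1$ and assert that $\underline{u}(t_n,\cdot+x(t_n))$ converges in $H^1(\R^3)$. This is only true if the time shift satisfies $t_n^1\equiv 0$. In the profile decomposition the alternative $t_n^1\to\pm\infty$ is allowed, and then
\[
\phi_n^1=e^{it_n^1\Delta_\Omega}\phi^1\quad\text{(Case 1)}\qquad\text{or}\qquad \phi_n^1=e^{it_n^1\Delta_\Omega}\big[(\chi_n^1\phi^1)(\cdot-x_n^1)\big]\quad\text{(Case 2)}
\]
does \emph{not} converge in $H^1$ after any spatial translation, so no compactness conclusion follows. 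You must rule out $t_n^1\to\pm\infty$ separately. The paper does this (its Scenario~II) by observing that if $t_n^1\to+\infty$ then
\[
\big\|e^{it\Delta_\Omega}u(\tau_n)\big\|_{L^5_{t,x}([0,\infty)\times\Omega)}\le \big\|e^{it\Delta_\Omega}\phi^1\big\|_{L^5_{t,x}([t_n^1,\infty)\times\Omega)}+\|\omega_n^1\|_{H^1_0(\Omega)}\longrightarrow 0,
\]
(with an extra convergence-of-domains step in Case~2), so small-data theory forces $u$ to scatter forward, contradicting the hypothesis. The case $t_n^1\to-\infty$ is symmetric. Note that this argument sidesteps entirely the question you raise in your last paragraph about the $\R^3$ nonlinear evolution of a threshold-saturating Case~2 profile: no nonlinear profile at threshold ever needs to be constructed. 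Add this step and your proof is complete; without it, the compactness claim is unjustified. (Minor point: the correct choice is $x(t_n)=x_n^1$, not $-x_n^1$, given the convention $\underline u(x+x(t),t)$.)
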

\begin{proof}
It is sufficient to show that for every sequence of time $\tau_n \geq 0,$ there exists (extracting if necessary) a sequence $(x_n)_n$ such that $u(x+x_n,\tau_n)$ has a limit in $H^1_0(\Omega)$. \\
By the profile decomposition in Theorem \ref{profile decom}, we have 
\begin{equation}
\label{decomp of u_n}
    u_n:=u(x,\tau_n)= \sum_{j=1}^J  \phi_n^j (x)+ \omega_{n}^J(x),
\end{equation}
where $\phi_n^j$ are defined in Theorem \ref{profile decom}, and $\omega_n^J$ satisfies \eqref{limJ limsup_n omega}.
We need to show that $J^*=1,\omega_n^1 \rightarrow 0   $ in $H^1_0(\Omega),$ and $ t_n^j \equiv 0$. By the Pythagorean expansion properties of the profile decomposition we have 
\begin{align}
\label{mass-profile}
\sum_{j=1}^J \lim_{n \rightarrow \infty} M[\phi_n^j] + \lim_{n \rightarrow \infty} M[\omega_n^J]= \lim_{n\rightarrow \infty} M[u_n]=M[Q],   \\
\label{energy-profile}
\sum_{j=1}^J \lim_{n \rightarrow \infty} E[\phi_n^j] + \lim_{n \rightarrow \infty} E[\omega_n^J]= \lim_{n \rightarrow \infty} E[u_n] =E[Q] .
\end{align}
We consider two possibilities. \\ 
\underline{\bf{Scenario \rm{I}}:} More than one profile are nonzero, i.e., $J^{*} \geq 2.$ Thus, there exists an $\varepsilon >0$ such that for all $j$, \begin{align}
\label{M&E profil}
   M[\phi_n^j] E[\phi_n^j] &\leq M_{\R^3}[Q] E_{\R^3}[Q] - \varepsilon, \\
\label{profil-L^2-bound}
   \left\| \phi_n^j \right\|_{L^2(\Omega)} \left\| \nabla \phi_n^j \right\|_{L^2(\Omega)} &\leq \left\| Q \right\|_{L^2(\R^3} \left\| \nabla Q \right\|_{L^2(\R^3)} - \varepsilon.
\end{align}

Recall that by  \cite[Theorem 3.2]{KiVisnaZhang16}, if $v_0\in H^1_0(\Omega)$ satisfies 
\begin{align}
\label{grad-w}
\left\| v_0 \right\|_{L^2(\Omega)} \left\| \nabla v_0 \right\|_{L^2(\Omega) }& <  \left\| Q \right\|_{L^2(\R^3)} \left\| \nabla Q \right\|_{L^2(\R^3)},  \\
\label{MEw_0}
M[v_0] E[v_0] &< M_{\R^3}[Q]E_{\R^3}[Q],
\end{align}
 then the corresponding solution $v(t)$ of \Nls scatters in both time directions.  \\

$\bullet$ Suppose $j$ is as in Case 1 (Theorem \ref{profile decom}), i.e., $x_n^j =0$ for all $n:$ \\ 
When $t_n^j \equiv 0 $, we define $v^j$ as the solution to (NLS$_\Omega$) with initial data $v^j(0)=\phi^j.$ \\ 
When $t_n^j \rightarrow \pm \infty $, we define $v^j$ as the solution to {\rm{(NLS$_\Omega$)}}, which scatters to $e^{it\Delta_\Omega} \phi^j$ as $t\rightarrow \pm \infty:$   
$$   \lim_{t \rightarrow \pm \infty} \left\| v^j(t) - e^{it\Delta_\Omega} \phi^j \right\|_{H^1_0(\Omega)}=0  .$$
In both cases, we have 
 \begin{equation}
\label{vj(0)-phi ^j}
\lim_{n \rightarrow \infty} \left\| v^j(t_n^j) - \phi_n^j \right\|_{H^1_0(\Omega)}=0.     
\end{equation}

Thus, by \eqref{M&E profil} and \eqref{profil-L^2-bound}, $v^j$ satisfies \eqref{grad-w} and \eqref{MEw_0}, and we see that $v^j$ is a global solution with finite scattering size. Therefore, we can approximate $v^j$ in $L^5 H^{1,\frac{30}{11}}(\R \times \Omega)$ by $C^\infty_c( \R \times \R^3)$ functions. More precisely, for any $\varepsilon > 0,$ there exists $ \psi_{\varepsilon}^{j} \in C^\infty_c ( \R \times \R^3)$ such that 
$$\left\| v^j- \psi_{\varepsilon}^{j} \right\|_{L^5 H^{1,\frac{30}{11}}(\R \times \Omega)} \leq \frac{\varepsilon}{2}. $$
Let $v_n^j(t,x)=v^j(t+t_n^j,x).$ Then from above $v_n^j$ is a global and scattering solution and by changing variables in time, for any $\varepsilon > 0,$ there exists $ \psi_{\varepsilon}^{j} \in C^\infty_c ( \R \times \R^3)$ such that, for $n$ sufficiently large, we have 

\begin{equation}
\label{aprox-v_n1}
\left\| v_n^j(t,x)-  \psi_\varepsilon^j(t+t_n^j,x) \right\|_{L^5 H^{1,\frac{30}{11}}(\R \times \Omega)} < \varepsilon.
\end{equation}

$\bullet$ Suppose $j$ is as in Case 2 (Theorem \ref{profile decom}): \\      
We apply Theorem \ref{contruction of v_n if d(x_n)} to obtain a global solution $v_n^j$ with $v_n^j(0)= \phi_n^j$. Furthermore, this solution has finite scattering size and satisfies, for $n$ sufficiently large, \\ 
\begin{equation}                    
\label{aprox-v_n2}
\left\| \vv_n^j(t,x)-  \psi_\varepsilon^j(t+t_n^j,x-x_n^j) \right\|_{\esp} < \varepsilon.
\end{equation}

In all cases, we can find $\psi_\varepsilon^j \in C^{\infty}_c $ such that \eqref{aprox-v_n2} holds, and there exists $C_j>0$, independent of $n$, such that
 \begin{equation}
\label{v_n_j bound X^1}
\|v_n^j \|_{X^1(\R \times \Omega )} \leq C_j \,  .
\end{equation}
Note that for large $j,$ by the small data theory, we have $\|v_n^j\|_{X^1(\R \times \Omega)} \lesssim \| \phi_n^j\|_{H^1_0(\Omega)}$. \\ 
Combining this with \eqref{mass-profile}, \eqref{energy-profile}, we deduce 
\begin{equation}
\label{sum-nonlinear-prof-carre}
\limsup_{ n \rightarrow + \infty} \sum_{j=1}^J \left\| v_n^j \right\|^2_{X^1(\R  \times \Omega)} \leq C \quad \text{ uniformly for finite} \;  J \leq J^\star.
\end{equation}

We first prove the asymptotic decoupling of the nonlinear profile, using the orthogonality properties \eqref{asymptotic ortho para}.

\begin{lem}[Decoupling of nonlinear profiles]
\label{Decoup-nonlinear-prof}
 For $k\neq j,$ we have
 \begin{multline}
\label{v^j v^k ortho}
\lim_{n \rightarrow + \infty} \left\|v_n^j v_n^k \right\|_{L^{\frac{5}{2}}  H^{1,\frac{15}{11}}_0(\R \times \Omega)}+  \left\|  \nabla v_n^j  \nabla v_n^k \right\|_{L^{\frac{5}{2}} L^{\frac{15}{11}}(\R \times \Omega)}  \\ +     \left\| v_n^j v_n^k \right\|_{L^{\frac{5}{2}} L^{\frac{30}{17}} (\R \times \Omega)} + \left\| \nabla v_n^j v_n^k \right\|_{L^{\frac{5}{2}} L^{\frac{30}{17}} (\R \times \Omega)} = 0.
\end{multline}
\end{lem}
\begin{proof}
We only prove $\| v_n^j v_n^k \|_{L^{\frac{5}{2}}H^{1,\frac{15}{11}}_0  (\R \times \Omega)}+   \| v_n^j v_n^k \|_{L^{\frac{5}{2}} L^{\frac{30}{17}} (\R \times \Omega)}     =o_n(1) $. The other proofs are analogous. 
Recall that by \eqref{aprox-v_n2}, for any $\varepsilon >0,$ there exists $N_\varepsilon \in \N$ and $\psi_\varepsilon^j,\psi_\varepsilon^k \in C^\infty_c(\R \times \R^3) $ such that for all $n \geq \N_\varepsilon$ we have  
\begin{multline}
\label{v^k-v^j}
\left\| \vv_n^k(t,x)-  \psi_\varepsilon^k(t+t_n^k,x-x_n^k) \right\|_{\esp}  \\ + \left\| \vv_n^j(t,x)-  \psi_\varepsilon^j(t+t_n^j,x-x_n^j) \right\|_{\esp} < \varepsilon.  
\end{multline}

Using \eqref{asymptotic ortho para}, one can see that the supports of $\psi_{\varepsilon}^{j}(t,x)$ and $\psi^k_{\varepsilon}( \cdot+t_n^k-t_n^j, \cdot-x_n^k+x_n^j)$ are disjoint for $n$ sufficiently large ( if $j,k$ as in Case $1,$ then $\psi_{\varepsilon}^{j}(\cdot,\cdot)$ and $\psi^k_{\varepsilon}( \cdot+t_n^k-t_n^j, \cdot)$ have disjoint time supports), and similarly, for the derivatives. Hence,
\begin{align}
\label{psi^k-psi^j}
\lim_{n \rightarrow + \infty} \left\| \psi_{\varepsilon}^{j}(t,x) \, \psi^k_{\varepsilon}( \cdot+t_n^k-t_n^j,\cdot-x_n^k+x_n^j) \right\|_{L^\frac{5}{2}H^{1,\frac{15}{11}} (\R \times \R^3)}&=0,  \\
\label{psi^k-psi^j-2}
\lim_{n \rightarrow + \infty} \left\| \psi_{\varepsilon}^{j}(t,x) \, \psi^k_{\varepsilon}( \cdot+t_n^k-t_n^j,\cdot-x_n^k+x_n^j) \right\|_{L^\frac{5}{2}H^{1,\frac{30}{17}}(\R \times \R^3)}&=0.
\end{align}

Combining \eqref{v^k-v^j}, \eqref{psi^k-psi^j} and \eqref{v_n_j bound X^1}, we have
\begin{align*}
\left\|v_n^j v_n^k \right\|_{L^{\frac{5}{2}}H^{1,\frac{15}{11}}_0 (\R \times \Omega)} &\leq   \left\| \vv_n^j-\psi_\varepsilon^j(\cdot+t_n^j,\cdot-x_n^j) \right\|_{L^5 H^{1,\frac{30}{11}} (\R\times \R^3)  } \Vert \vv_n^k  \rVert_{\esp} \\ 
&+ \left\| \psi_{\varepsilon}^j \right\|_{\esp} \left\| \vv_n^k-\psi_\varepsilon^k(\cdot+t_n^k,\cdot-x_n^k) \right\|_{\esp} \\ 
&+\left\| \psi_{\varepsilon}^{j}(t,x) \, \psi^k_{\varepsilon}( \cdot+t_n^k-t_n^j,\cdot-x_n^k+x_n^j) \right\|_{L^\frac{5}{2}H^{1,\frac{15}{11}}(\R \times \R^3)} \leq  C\varepsilon,
\end{align*}
provided $n$ is large enough, since the last term goes to $0$ as $n$ goes to infinity.

Next, we estimate $  \left\| v_n^j v_n^k \right\|_{L^{\frac{5}{2}} L^{\frac{30}{17}} (\R \times \Omega)} $ as follows  
\begin{align*}
\left\|v_n^j v_n^k \right\|_{L^{\frac{5}{2}}L^{\frac{30}{17}} (\R \times \Omega)} &\leq   \left\| \vv_n^j-\psi_\varepsilon^j(\cdot+t_n^j,\cdot-x_n^j) \right\|_{L^5_{t,x} (\R\times \R^3)  } \Vert v_n^k  \rVert_{L^5 L^{\frac{30}{11}} (\R \times \R^3)} \\ 
&+ \left\| \psi_{\varepsilon}^j \right\|_{L^5 L^{\frac{30}{11}}} \left\| \vv_n^k-\psi_\varepsilon^k(\cdot+t_n^k,\cdot-x_n^k) \right\|_{L^5_{t,x} (\R\times \R^3)  }  \\ 
&+\left\| \psi_{\varepsilon}^{j}(t,x) \, \psi^k_{\varepsilon}( \cdot+t_n^k-t_n^j,\cdot-x_n^k+x_n^j) \right\|_{L^\frac{5}{2}H^{1,\frac{30}{17}}(\R \times \R^3)} .
\end{align*}
Using \eqref{v^k-v^j}, \eqref{psi^k-psi^j-2} and \eqref{v_n_j bound X^1} and Sobolev embedding $ \left\| \cdot  \right\|_{L^5_{t,x}}   \leq C \left\| \cdot \right\|_{L^5 H^{1,\frac{30}{11}}}, $ we obtain that, for large $n$,
\begin{align*}
\left\|v_n^j v_n^k \right\|_{L^{\frac{5}{2}}L^{\frac{30}{17}} (\R \times \Omega)}  \leq  C\varepsilon,
\end{align*}
provided $n$ is large enough, which concludes the proof of Lemma \ref{Decoup-nonlinear-prof}.
\end{proof}

We return to the proof of Proposition \ref{compac-prop}. As a consequence of the asymptotic decoupling of the nonlinear profile in Lemma \ref{Decoup-nonlinear-prof},
we have 
 \begin{equation}
\label{limsup sum v^n_j}
    \limsup_{n \rightarrow \infty} \| \sum_{j=1}^{J} v_n^j \|_{X^1(\R \times \Omega)} \leq C 
\end{equation}
uniformly for finite $J \leq J^*.$ Indeed, by \eqref{sum-nonlinear-prof-carre} and \eqref{v^j v^k ortho} we obtain
\begin{align*}
    \left\| \sum_{j=1}^J v_n^j \right\|_{L^5 L^{\frac{30}{11}} ( \R \times \Omega)  }^2 &=  \left\|\left( \sum_{j=1}^J v_n^j\right)^2 \right\|_{L^{\frac{5}{2}}L^{\frac{15}{11}}(\R \times \Omega)}  \\ &\leq \sum_{j=1}^{J} \left\|v_n^j\right\|_{L^5_t L^{\frac{30}{11}}_x(\R \times \Omega)}^2 + C(J) \sum_{j\neq k } \left\| v_n^j v_n^k \right\|_{L^{\frac{5}{2}}L^{\frac{15}{11}}(\R \times \Omega)}\\ &\leq  C +o_n(1).
\end{align*}
Similarly, 

\begin{align*}
 \left\| \sum_{j=1}^J \nabla v_n^j \right\|_{L^5 L^{\frac{30}{11}} (\R \times \Omega)}^2 &=  \left\|\left( \sum_{j=1}^J  \nabla v_n^j\right)^2 \right\|_{L^{\frac{5}{2}}L^{\frac{15}{11}}(\R \times \Omega)} \\ &\leq \sum_{j=1}^{J} \left\| \nabla v_n^j\right\|_{L^5_t L^{\frac{30}{11}}(\R \times \Omega)}^2 + C(J) \sum_{j\neq k } \left\| \nabla v_n^j  \nabla v_n^k \right\|_{L^{\frac{5}{2}}L^{\frac{15}{11}} (\R \times \Omega)}  \leq  C. 
\end{align*}

This completes the proof of \eqref{limsup sum v^n_j}. Using similar argument, one can check that for given $\eta>0,$ there exists $J^{'}:=J^{'}(\eta)$ such that 
\begin{equation}
\label{sumj-j'}
\forall J \geq J^{'} , \quad \limsup_{n \rightarrow \infty} \| \sum_{j=J^{'}}^{J} v_n^j \|_{ X^1 ( \R \times \Omega )} \leq \eta  .
\end{equation}

For each $n$ and $J$, we define an approximate solution $u_n^J$ to (NLS$_\Omega$) by
\begin{equation}
\label{approx decom uJ}
    u_n^J= \sum_{j=1}^{J}v_n^j+ e^{it \Delta_{\Omega}} \omega_{n}^J   .
\end{equation} 
Before continuing with the rest of the proof of Proposition \ref{compac-prop}, we claim that the following statements hold true.
\begin{clm}
\label{u_n^J(0)-u_n(0)}
$$\lim_{n \rightarrow \infty } \left\| u_n^J(0)-u_n(0) \right\|_{H^1_0(\Omega)}=0 .$$
\end{clm}
\begin{clm}
\label{limsup of uJ}
$$\exists C>0, \; \forall J, \quad  \limsup_{n \rightarrow \infty} \left\|u_n^J \right\|_{X^1(\R \times \Omega)} \leq C \,. $$
\end{clm}
\begin{clm}
\label{the euror}
$$\lim_{J \rightarrow J^*} \limsup_{n \rightarrow \infty} \left\| i\partial_t u_n^J+ \Delta_\Omega u_n^J + \left|u_n^J\right|^2u_n^J \right\|_{N^1(\R)}=0 ,$$
with $N^1$ defined in \eqref{def-N1}.
\end{clm}
Applying Lemma \ref{stability}, we get that $u_n$ is a global solution with finite scattering size, which yields a contradiction by showing that there is only one profile. Hence, Scenario $\rm{I}$ cannot occur. \\ 

\begin{proof}[Proof of Claim \ref{u_n^J(0)-u_n(0)} ]
Using \eqref{vj(0)-phi ^j}, if $j$ is as in Case $1,$ or the fact that $v_n^j(0)=\phi^j_n$ if $j$ is as in Case $2,$ together with the decomposition of $u_n$ in  \eqref{decomp of u_n} and $u^J_n$ in  \eqref{approx decom uJ}, we obtain \begin{equation}
    \left\|u_n^J(0)-u_n(0)\right\|_{H^1_0(\Omega)} \leq \sum_{j=1}^{J} \left\| v_n^j(0)-\phi_n^j \right\|_{H^1_0(\Omega)} \longrightarrow 0\;  \text{ as } n \rightarrow \infty. 
\end{equation}
\end{proof}

\begin{proof}[Proof of Claim \ref{limsup of uJ} ]
Using \eqref{limsup sum v^n_j}, Strichartz estimate \eqref{strich-S1}  with \eqref{limJ limsup_n omega}, we obtain 
\begin{align*}
 \limsup_{n \rightarrow \infty} \left\|u_n^J \right\|_{X^1(\R \times \Omega)} 
\leq     \limsup_{n \rightarrow \infty} \| \sum_{j=1}^{J} v_n^j \|_{X^1(\R \times \Omega)} + \limsup_{ n \rightarrow + \infty} \left\| \omega_n^J \right\|_{H^1_0(\Omega)} \leq C.
\end{align*}
\end{proof}

\begin{proof}[Proof of Claim \ref{the euror} ]
Let $F(z)= -|z|^2z$, recall $\sum_{j=1}^J v_n^j= u_n^J -e^{it\Delta_\Omega} \omega_n^J ,$ and write
 \begin{align*}
    (i\partial_t + \Delta_\Omega)u_n^J - F(u_n^J)&= \sum_{j=1}^{J} F(v_n^j)- F(u_n^J) \\ &= \sum_{j=1}^{J} F(v_n^j)- F(\sum_{j=1}^J v_n^j)+F(u_n^J-e^{it\Delta_\Omega} \omega_n^J)-F(u_n^J).
    \end{align*} 
    We have 
    \begin{align}
    \left|  \sum_{j=1}^J F(v_n^j)-F(\sum_{j=1}^J v_n^j)  \right| \leq C  \sum_{j \neq k} | v_n^j |^2 |v_n^k|. 
    \end{align}
    Taking the derivatives, we get
    \begin{align*} 
     \left| \nabla \bigg\{ \sum_{j=1}^J F(v_n^j)-F(\sum_{j=1}^J v_n^j)  \bigg\} \right| \leq  C \sum_{j \neq k} | \nabla v_n^j | |v_n^j| |v_n^k|+  C  \sum_{j \neq k} |v_n^j|^2 | \nabla v_n^k |,
     \end{align*}
   which yields 
    \begin{align*}
        \left\| \sum_{j=1}^J F(v_n^j)-F(\sum_{j=1}^J v_n^j) \right\|_{{L^{\frac{5}{3}}L^{\frac{30}{23}}} } 
        &\leq C \left( \sum_{j \neq k} \left\| v_n^j \right\|_{L^5_{t,x}} \left\| v_n^j v_n^k \right\|_{L^{\frac{5}{2}}L^{\frac{30}{17}}}    \right) ,  \\  \\
 \left\| \nabla \bigg\{ \sum_{j=1}^J F(v_n^j)-F(\sum_{j=1}^J v_n^j) \bigg\} \right\|_{{L^{\frac{5}{3}}L^{\frac{30}{23}}} } 
 &\leq C \bigg( \sum_{j \neq k} \left\|  v_n^j v_n^k \nabla v_n^j \right\|_{L^{\frac{5}{3}} L^{\frac{30}{23}}} + \sum_{j \neq k} \left\| |v_n^j|^2 \nabla v_n^k \right\|_{L^{\frac{5}{3}} L^{\frac{30}{23}}}  \bigg) \\ 
   &\leq C \sum_{j \neq k} \left\| v_n^j \right\|_{L^5_{t,x}} \left(  \left\| \nabla v_n^j v_n^k \right\|_{L^{\frac{5}{2}}L^{\frac{30}{17} }}  +  \left\| v_n^j \nabla v_n^k \right\|_{L^{\frac{5}{2}}L^{\frac{30}{17}}} \right),
    \end{align*}
    which goes to $0$ as $n \rightarrow \infty$, in view of Lemma \ref{Decoup-nonlinear-prof} and \eqref{v_n_j bound X^1}. \\ 
    
    In addition, 
    \begin{align}
    \label{f(u-e^(it)omega)-f(u)}
        \left\|F(u_n^J-e^{it\Delta_\Omega} \omega_n^J )-F(u_n^J) \right\|_{L^{\frac{5}{3}}  H^{1,\frac{30}{23}}   }
  &  \leq \left\| F(u_n^J-e^{it\Delta_\Omega} \omega_n^J )-F(u_n^J) \right\|_{L^{\frac{5}{3}}L^{\frac{30}{23}}} \\ \label{f'(u-e^(it)omega)-f'(u)} & + \left\| \nabla \left( F(u_n^J-e^{it\Delta_\Omega} \omega_n^J )-F(u_n^J) \right) \right\|_{L^{\frac{5}{3}}L^{\frac{30}{23}}} .
    \end{align}
    We estimate the differences as 
    \begin{align*}
    \left| F(u_n^{J} -  e^{ i t \Delta_{\Omega}} \omega_n^J) -F(u_n^J) \right| &\leq C \left(  \left| e^{ i t \Delta_{\Omega}} \omega_n^J \right|^2  \left| e^{ i t \Delta_{\Omega}} \omega_n^J  \right|+ \left| u_n^J   \right|^2  \left| e^{ i t \Delta_{\Omega}} \omega_n^J  \right| \right) ,\\
        \left| \nabla \left\{ F(u_n^{J} -  e^{ i t \Delta_{\Omega}} \omega_n^J) -F(u_n^J)  \right\} \right|  &\leq  C \bigg(      \left|  e^{ i t \Delta_{\Omega}} \omega_n^J \right|^2 \left| \nabla e^{ i t \Delta_{\Omega}} \omega_n^J \right| 
         + \left| \nabla u^J_n \right|  \left| u^J_n \right|   \left| e^{ i t \Delta_{\Omega}} \omega_n^J  \right| \\
         &+   \left| \nabla u_n^J\right| \left| \nabla e^{ i t \Delta_{\Omega}} \omega_n^J  \right|^2+ \left|   u^J_n \right|^2 \left|  \nabla e^{ i t \Delta_{\Omega}} \omega_n^J  \right|    \bigg).
    \end{align*}
    Using Claim \ref{limsup of uJ}, H\"older and Sobolev inequalities, we get 
    \begin{align*}
        \eqref{f(u-e^(it)omega)-f(u)} &\leq \left\| e^{it \Delta_{\Omega}} \omega_n^J \right\|_{L^5_{t,x}}
        \left[ \left\| u_n^J \right\|_{ L^5 L^{\frac{30}{11}} } \left\|  u_n^J  \right\|_{L^5_{t,x}}  +  \left\| e^{it \Delta_{\Omega}} \omega_n^J \right\|_{ L^5 L^{\frac{30}{11}} }   \left\| e^{it \Delta_{\Omega}} \omega_n^J \right\|_{ L^5_{t,x}}   \right]      \\
 &\leq \left\| e^{it \Delta_{\Omega}} \omega_n^J \right\|_{L^5_{t,x}} 
        \left[ \left\| u_n^J \right\|_{ X^1 }^2+ \left\| e^{it \Delta_{\Omega}} \omega_n^J \right\|_{ X^1 }^2    \right]   + \left\| e^{it \Delta_{\Omega}} \omega_n^J \right\|_{L^5_{t,x}}^2 \left\| u_n^J \right\|_{X^1}  \\
       &  \leq C \left\| e^{it \Delta_{\Omega}} \omega_n^J \right\|_{L^5_{t,x}} ,
     \end{align*}
    which converges to $0$ as $n \rightarrow \infty$ and $J \rightarrow \infty$. Similarly, 
    \begin{align*}
        \eqref{f'(u-e^(it)omega)-f'(u)} &\leq \left\| \nabla u_n^J u_n^J \right\|_{L^{\frac{5}{2}}L^{\frac{30}{17}}} \left\| e^{it \Delta_{\Omega}} \omega_n^J \right\|_{L^5_{t,x}}+ \left\| \nabla u_n^J \right\|_{L^5 L^{\frac{30}{11}}} \left\| \left|e^{it \Delta_{\Omega}} \omega_n^J \right|^2 \right\|_{L^\frac{5}{2}_{t,x}} \\ & + \left\| \nabla (e^{it \Delta_{\Omega}} \omega_n^J) \right\|_{L^{5}L^{\frac{30}{11}}} \left\| \left|e^{it \Delta_{\Omega}} \omega_n^J \right|^2 \right\|_{L^\frac{5}{2}_{t,x}} +  \left\| u_n^J \right\|_{L^5_{t,x}} \left\| u_n^J \nabla e^{it \Delta_{\Omega}} \omega_n^J \right\|_{L^{\frac{5}{2}} L^{\frac{30}{17}}} \\
        & \leq \left\| \nabla u_n^J \right\|_{L^5 L^{\frac{30}{11}}} \left[ \left\|u_n^J \right\|_{L^5_{t,x}} \left\| e^{it \Delta_{\Omega}} \omega_n^J \right\|_{L^5_{t,x}} + \left\| e^{it \Delta_{\Omega}} \omega_n^J \right\|_{L^5_{t,x}}^2 \right] \\ &+ \left\| \nabla e^{it \Delta_{\Omega}} \omega_n^J \right\|_{L^5 L^{\frac{30}{11}}} \left\| e^{it \Delta_{\Omega}} \omega_n^J \right\|_{L^5_{t,x}}^2 + \left\|u_n^J \right\|_{L^5_{t,x}} \left\| u_n^J \nabla e^{it \Delta_{\Omega}} \omega_n^J\right\|_{L^{\frac{5}{2}} L^{\frac{30}{17}}} .
    \end{align*}
    Thus, it remains to show that 
    \begin{equation}
    \label{remains-term-local-smooth}
        \lim_{J \rightarrow \infty} \limsup_{n \rightarrow \infty} \left\| u_n^J \nabla e^{it \Delta_{\Omega}} \omega_n^J\right\|_{L^{\frac{5}{2}} L^{\frac{30}{17}}}=0 . 
    \end{equation}
    Recall that $u_n^J= \sum_{j=1}^{J}v_n^j+ e^{it\Delta_\Omega} \omega_n^J.$ Then
    \begin{align*}
        \left\| u_n^J \nabla e^{it \Delta_{\Omega}} \omega_n^J\right\|_{L^{\frac{5}{2}} L^{\frac{30}{17}}} &\leq  \left\|  \sum_{j=1}^{J}v_n^j \nabla e^{it\Delta_\Omega} \omega_n^J \right\|_{L^{\frac{5}{2}} L^{\frac{30}{17}}} + \left\| e^{it\Delta_\Omega} \omega_n^J \nabla e^{it\Delta_\Omega} \omega_n^J \right\|_{L^{\frac{5}{2}} L^{\frac{30}{17}}} \\ &\leq  \left\|  \sum_{j=1}^{J}v_n^j \nabla e^{it\Delta_\Omega} \omega_n^J \right\|_{L^{\frac{5}{2}} L^{\frac{30}{17}}}+ \left\| e^{it\Delta_\Omega} \omega_n^J \right\|_{L^5_{t,x}} \left\|\nabla e^{it\Delta_\Omega} \omega_n^J \right\|_{L^{5} L^{\frac{30}{11}}}.
    \end{align*}
    
    Hence, Claim \ref{the euror} holds if \begin{equation*}
        \lim_{J \rightarrow \infty} \limsup_{n \rightarrow \infty}\left\|  \sum_{j=1}^{J}v_n^j \nabla e^{it\Delta_\Omega} \omega_n^J \right\|_{L^{\frac{5}{2}} L^{\frac{30}{17}}}=0 .
    \end{equation*}
    From \eqref{sumj-j'}, we have $\forall \eta >0 , \exists J'=J'(\eta) $ such that
     $$\forall J \geq J^{'} , \quad  \limsup_{n \rightarrow \infty} \left\| \sum_{j=J'}^{J} v_n^j \right\|_{X^1} < \eta .$$
    Thus, we have 
    \begin{align*}
    \limsup_{ n \rightarrow \infty} \left\| \left( \sum_{j=J^{'}}^{J} v_n^j \right) \nabla e^{it \Delta_{\Omega}} \omega_n^J \right\|_{L^{\frac 52} L^\frac{30}{17}} \leq  \limsup_{n  \rightarrow \infty}  \left\| \sum_{j=J^{'}}^{J} v_n^j \right\|_{X^1} \left\|   \nabla e^{it \Delta_{\Omega}} \omega_n^J \right\|_{L^5_{t,x}}      \leq \eta,
     \end{align*}
    where $\eta$ is arbitrary and $J^{'}=J^{'}(\eta)$ as in  \eqref{sumj-j'}. Thus, to prove \eqref{remains-term-local-smooth} it suffices to show that 
    \begin{equation}
    \label{limJ limsup^n v omega'}
    \lim_{J \rightarrow \infty} \limsup_{n \rightarrow \infty}\left\|  v_n^j \nabla e^{it\Delta_\Omega} \omega_n^J \right\|_{L^{\frac{5}{2}} L^{\frac{30}{17}}}=0  \; \; \text{ for all } 1\leq j \leq J' .
    \end{equation}
    We approximate $v_n^j$ by $C^\infty_c(R\times \R^3) $ functions $\psi_\varepsilon^j$ obeying \eqref{aprox-v_n2} with support in $[-T,T] \times \{ |x| \leq R\}.$ From Proposition \ref{local smoothing} and \eqref{limJ limsup_n omega}, we deduce 
    \begin{align*}
        \left\|  v_n^j \nabla e^{it\Delta_\Omega} \omega_n^J \right\|_{L^{\frac{5}{2}} L^{\frac{30}{17}}} &\leq \left\| v_n^j-\psi^j_\varepsilon(\cdot + t_n^j,\cdot-x_n^j)\right\|_{L^5_{t,x}} \left\| \nabla e^{it\Delta_\Omega } \omega_n^J \right\|_{L^5 L^{\frac{30}{11}}} \\
         &+  \left\|  \psi_\varepsilon^j \right\|_{L^\infty_{t,x}} \left\|  \nabla e^{it \Delta_\Omega} \omega_n^J \right\|_{L^\frac{5}{2}L^{\frac{30}{17}}(\{|t| \leq T,\; |x| \leq R  \})} \\
          & \leq C \varepsilon + CR^{\frac{31}{60}} T^{\frac{1}{5}} \left\| e^{it\Delta_\Omega} \omega_n^J \right\|_{L^5_{t,x}}^{\frac{1}{6}} \left\| \omega_n^J \right\|^{\frac{5}{6}}_{H^1_0(\Omega)} .
    \end{align*}
    By taking the limit and choosing $\varepsilon$ small, we obtain \eqref{remains-term-local-smooth}. Hence, Claim \ref{the euror} holds.   
\end{proof}

Returning to the proof of the Proposition \ref{compac-prop}, we consider the other possibility.  \\
\underline{\bf Scenario \rm{II}:}  Only one nonzero profile. By \eqref{decomp of u_n}
\begin{equation*}
    u_n:=u(x,\tau_n)= \phi_n^1 + \omega_n^1,  
\end{equation*}
with \begin{equation}
  \label{w goes to 0 in H^1}  
\lim_{n \rightarrow \infty} \left\| \omega_n^1 \right\|_{H^1_0(\Omega)}=0.
\end{equation} 
If not, there exists $ \varepsilon>0$ such that $ \forall n,\; $ $$ E[\phi_n^1] M[\phi_n^1] \leq E_{\R^3}[Q] M_{\R^3}[Q]- \varepsilon, $$
and one can show by the previous argument that $u$ scatters in $H^1_0(\Omega).$ \\

It remains to show that $t_n^1$ is bounded and this will prove the convergence, up to a subsequence. \\ 
\begin{itemize}

\item If $t_n^1 \rightarrow + \infty$ (similarly, $t_n^1 \rightarrow -\infty$) and $\phi_n^1$ conforms to Case $1,$ i.e., $\phi_n^1= e^{it^1_n \Delta_\Omega} \phi^1,$
\begin{align*}
    \left\| e^{it\Delta_\Omega } u_n \right\|_{L^5_{t,x}([0,+\infty) \times \Omega)} &= \left\| e^{it\Delta_\Omega } \phi_n^1+ e^{it\Delta_\Omega } \omega_n^1 \right\|_{L^5_{t,x}([0,+\infty) \times \Omega)} \\ 
    &\leq   \left\| e^{i(t+t_n^1)\Delta_\Omega } \phi^1 \right\|_{L^5_{t,x}([0,+\infty) \times \Omega)} + \left\| \omega_n^1 \right\|_{H^1_0(\Omega)} \\
   &\leq \left\| e^{it\Delta_\Omega } \phi^1 \right\|_{L^5_{t,x}([t_n^1,+\infty) \times \Omega)} + \left\| \omega_n^1 \right\|_{H^1_0(\Omega)},
\end{align*}
which goes to $0$ as $n$ goes to $\infty$, showing that $u_n$ scatters for positive (similarly negative) time, a contradiction. 
\item If $t_n^1 \rightarrow + \infty$ (similarly, $t_n^1 \rightarrow -\infty$) and $\phi_n^1$ conforms to Case $2,$ i.e., $$\phi_n^1= e^{it_n^1 \Delta_{\Omega}}[(\chi_n^1 \phi^1)(x-x_n^1)], \quad \text{ where} \; \;   \chi_n^1:=\chi \left( \frac{x}{|x_n^1|} \right).$$
We first prove that
\begin{equation}
\label{conv-domain}
\lim_{n \rightarrow + \infty}   \left\| e^{i\, t \Delta_{\Omega_n} } (\chi_n^1 \phi^1 )- e^{i\, t \Delta_{\R^3} } (\chi_n^1 \phi^1 ) \right\|_{L^5_{t,x}( (0,+\infty) \times \R^3)}=0,
\end{equation}
where  $\Omega_n:=\Omega - \{x_n\}. $
Indeed,  by a density argument, for any $ \varepsilon >0, $ there exist $\psi_{\varepsilon}\in \mathcal{C}^{\infty}_c(\R^3)$ such that 
\begin{equation}
\label{density}
 \left\|  \phi^1 -\psi_{\varepsilon} \right\|_{H^1(\R^3)} \leq \frac{\varepsilon}{4}. 
\end{equation} 
By the definition of $\chi_n,$ as $|x_n| \longrightarrow + \infty,$ for any $\varepsilon >0$ there exists $ N_{\varepsilon} \in \N$ such that 
\begin{equation} 
\forall n \geq N_{\varepsilon}, \quad  \label{chiphi-phi}   \left\| \chi_n^1 \phi^1-\phi^1 \right\|_{H^1(\R^3)}  \leq \frac{\varepsilon}{4}. 
\end{equation}
Using \eqref{density} and  \eqref{chiphi-phi}, we have 
$$   \forall n \geq N_{\varepsilon}, \quad \left\| \chi_n^1  \phi^1 -\psi_{\varepsilon} \right\|_{H^1(\R^3)} \leq \frac{\varepsilon}{2}. $$
Combining this with the Strichartz inequality, we obtain for large $n$
\begin{align}
\left\| e^{ it \Delta_{\Omega_n}} \left( \chi^1_n \phi^1- \psi_{\varepsilon} \right)\right\|_{L^5_{t,x}((0,+\infty) \times \R^3)} + 
\left\| e^{ it \Delta_{\R^3}} \left( \chi^1_n \phi^1- \psi_{\varepsilon}  \right)\right\|_{L^5_{t,x}((0,+\infty) \times \R^3)}  \leq \frac{\varepsilon}{2}. 
\end{align}  
From \cite[Proposition 2.13]{KiVisnaZhang16}, as $|x_n|\longrightarrow + \infty, $ we have for large $n$
\begin{equation}
\label{conv-domain-comp}
\ \left\| e^{it \Delta_{\Omega_n}} \psi_{\varepsilon} - e^{it \Delta_{\R^3}} \psi_{\varepsilon} \right\|_{L^5_{t,x}((0,\infty) \times \R^3)} \leq  \frac{\varepsilon}{2},
\end{equation}
which yields \eqref{conv-domain}. We now have
\begin{align*}
    \left\| e^{it\Delta_\Omega } u_n \right\|_{L^5_{t,x}([0,+\infty) \times \Omega)} &= \left\| e^{it\Delta_\Omega } \phi_n^1+ e^{it\Delta_\Omega } \omega_n^1 \right\|_{L^5_{t,x}([0,+\infty) \times \Omega)} \\ 
    &\leq   \left\| e^{i(t+t_n^1)\Delta_\Omega } (\chi_n^1 \phi^1 )(x-x_n^1)\right\|_{L^5_{t,x}([0,+\infty) \times \Omega)} + \left\| \omega_n^1 \right\|_{H^1_0(\Omega)} \\
   &\leq \left\| e^{it\Delta_\Omega }  (\chi_n^1 \phi^1 )(x-x_n^1) \right\|_{L^5_{t,x}([t_n^1,+\infty) \times \Omega)} + \left\| \omega_n^1 \right\|_{H^1_0(\Omega)} \\
   &\leq   \left\| e^{i\, t \Delta_{\Omega_n} } (\chi_n^1 \phi^1 )- e^{i\, t \Delta_{\R^3} } (\chi_n^1 \phi^1 ) \right\|_{L^5_{t,x}( (t_n^1,+\infty) \times \R^3)}  \\ &+   \left\| e^{i\, t \Delta_{\R^3} } (\chi_n^1 \phi^1 ) \right\|_{L^5_{t,x}( (t_n^1,+\infty) \times \R^3)} + \left\| \omega_n^1 \right\|_{H^1_0(\Omega)} ,
\end{align*}
which goes to $0$ as $n$ goes to $\infty$, by \eqref{conv-domain} and the monotone convergence theorem, showing that $u_n$ scatters for positive (respectively, negative) time, a contradiction. This completes the proof of Proposition \ref{compac-prop}.
\end{itemize}
\end{proof}
\begin{corll}
\label{x=X}
Let $u$ be as in Proposition \ref{compac-prop}. Then one can choose the continuous function $x(t)$ such that $X(t)=x(t)$ for all $t \in D_{\delta_{0}},$ and the set $K$ has a compact closure in $H^1(\R^3).$
\end{corll}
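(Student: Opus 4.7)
The plan is to show that the translation parameter $x(t)$ provided by Proposition \ref{compac-prop} may be chosen so that $x(t)=X(t)$ on $D_{\delta_{0}}$ while retaining both its continuity and the precompactness of $K$ in $H^{1}(\R^{3})$.

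The crucial step is to establish that $X(t)-x(t)$ stays bounded on $D_{\delta_{0}}$, after shrinking $\delta_{0}$ if necessary. For such $\delta_{0}$, combining the decomposition \eqref{decom-u-trans}, Proposition \ref{lem-equiv-modu-param}, and \eqref{x0large} (which forces $|X(t)|$ to be large once $\delta(t)$ is small), one obtains, uniformly for $t \in D_{\delta_{0}}$,
\[
\bigl\|\uu(\cdot+X(t),t)-e^{i(\theta(t)+t)}Q\bigr\|_{H^{1}(\R^{3})} \;\leq\; \tilde{\varepsilon}(\delta_{0}),
\]
so that $\{\uu(\cdot+X(t),t):t\in D_{\delta_{0}}\}$ lies in a small $H^{1}$-neighborhood of the compact orbit $\{e^{i\varphi}Q : \varphi \in \R\}$. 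If $|X(t_{n})-x(t_{n})| \to \infty$ along some sequence $t_{n} \in D_{\delta_{0}}$, then Proposition \ref{compac-prop} yields, up to a subsequence, $\uu(\cdot + x(t_{n}), t_{n}) \to f$ strongly in $H^{1}(\R^{3})$; by mass conservation, $\|f\|_{L^{2}(\R^{3})} = \|Q\|_{L^{2}(\R^{3})} > 0$. Rewriting
\[
\uu(\cdot+X(t_{n}),t_{n}) \;=\; f\bigl(\cdot + X(t_{n}) - x(t_{n})\bigr) + o_{n}(1) \quad \text{in } H^{1}(\R^{3}),
\]
the first term tends weakly to zero while the entire expression stays within $\tilde{\varepsilon}(\delta_{0})$ of a nonzero translate of $Q$, a contradiction. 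Hence $|X(t)-x(t)| \leq C$ on $D_{\delta_{0}}$.

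To build the new translation, I would set $\tilde{x}(t) := X(t)$ on the open set $D_{\delta_{0}}$, where continuity is granted by Lemma \ref{modulation}. At each boundary point $t_{b} \in \partial D_{\delta_{0}}$, $X$ admits a continuous extension via the continuity of $t \mapsto \uu(t)$ in $H^{1}$ and of the modulation construction in $\uu$; on each open interval of $[0, \infty) \setminus D_{\delta_{0}}$ I would take $\tilde{x}(t) := x(t) + c(t)$ with a continuous bounded correction $c$ chosen to match $X$ at the endpoints, its boundedness being inherited from the previous step. The new set $\tilde{K}$ is then contained in the union of a small $H^{1}$-neighborhood of the compact orbit $\{e^{i\varphi}Q\}$ and a bounded spatial translate of the original compact set from Proposition \ref{compac-prop}, both precompact in $H^{1}(\R^{3})$.

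The main obstacle is the rigidity argument at the heart of the first step, forcing $X(t)$ and $x(t)$ to stay at bounded distance on $D_{\delta_{0}}$; the continuous patching in the second step is then a routine topological matter.
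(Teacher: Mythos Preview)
Your argument is essentially the paper's: both show that $|X(t)-x(t)|$ stays bounded on $D_{\delta_0}$ and then modify $x(t)$ accordingly. The paper phrases the key step as a local mass lower bound --- the decomposition \eqref{decom-u-trans} and Proposition \ref{lem-equiv-modu-param} give
\[
\int_{|x+x(t)-X(t)|\leq R_0}\bigl(|\nabla\uu(t,x+x(t))|^2+|\uu(t,x+x(t))|^2\bigr)\geq \varepsilon_0>0,
\]
so compactness of $K$ forces $|x(t)-X(t)|$ bounded --- while you run the same contradiction through weak convergence. Both are fine.

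One slip: your final sentence asserts that a small $H^1$-neighborhood of the compact orbit $\{e^{i\varphi}Q\}$ is precompact, but in infinite dimensions a fixed-radius neighborhood of a compact set is bounded yet not precompact. The clean justification is the one you already use for the complementary piece: since $|\tilde{x}(t)-x(t)|$ is bounded for \emph{all} $t\geq 0$ (by the first step on $D_{\delta_0}$ and by construction elsewhere), every sequence in $\tilde K$ is of the form $\uu(\cdot+x(t_n)+y_n,t_n)$ with $\{y_n\}$ bounded; extracting so that $\uu(\cdot+x(t_n),t_n)\to f$ and $y_n\to y_0$, continuity of translation in $H^1$ gives convergence to $f(\cdot+y_0)$. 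Replace the neighborhood claim with this and the proof is complete.
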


\begin{proof}
Recall that by the definition of $D_{\delta_{0}}$, the modulation parameters $X(t), \theta(t)$ and $\alpha(t)$ are well defined for all $t \in D_{\delta_{0}}.$ Let $x(t)$ be the translation parameter given by Proposition \ref{compac-prop}. Let $R_0>0.$ Then by the decomposition of $u$ in \eqref{decom-u-trans}, Proposition \ref{lem-equiv-modu-param} and the fact $\Psi(x)=1$ for $|x|$ large, there exists ${C_{\star}}>0$ such that $$ \forall t \in D_{\delta_{0}} ,  \qquad \int_{|x|\leq R_0} \left| \nabla Q \right|^2+ |Q|^2 - C_{\star} \left( \delta(t) +\e \right) \leq \int_{|x-X(t)| \leq R_0}  \left| \nabla \uu \right|^2 + |\uu|^2  .$$
Taking $\delta_0$ small if necessary,  there exists $\varepsilon_0 >0$ such that 
$$ \forall t \in D_{\delta_{0}}, \qquad \int_{|x+x(t)-X(t)|\leq R_0} \left| \nabla \uu(t,x+x(t))\right|^2 + |\uu(t,x+x(t))|^2 \geq \varepsilon_0 >0. $$
Using the fact that $K$ has a compact closure in $H^1(\R^3),$ we get that $|x(t)-X(t)|$ is bounded. Thus, one can modify $x(t)$ such that  $\overline{K}$ remains compact and  for all $t$ in $D_{\delta_0},$ $x(t)=X(t).$  
\end{proof}
\subsection{Control of the translation parameters}
\label{sec-bddxx}
\begin{prop}
\label{bddX}
Consider a solution $u$  of \Nls such that 
\begin{equation}
\label{ME_u=ME_Q+Grad}
M[u]=M_{\R^3}[Q], \; E[u]=E_{\R^3}[Q], \; \left\| \nabla u_0 \right\|_{L^2(\Omega)} <  
\left\| \nabla Q \right\|_{L^2(\R^3)}
\end{equation}
and 
\begin{equation}
\label{compacUbar}
 K:=\{ \underline{u}(t,x+x(t)); t \geq 0 \} 
 \end{equation}
has a compact closure in $H^1(\R^3).$ Then $x(t)$ is bounded. 
\end{prop}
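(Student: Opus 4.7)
The plan is to argue by contradiction. Suppose there is a sequence $t_n\to\infty$ with $|x(t_n)|\to\infty$. By Proposition~\ref{compac-prop}, after passing to a subsequence, $\underline u(t_n,\cdot+x(t_n))\to v$ in $H^1(\R^3)$. Since the translated obstacles $\Omega^c-x(t_n)$ escape to infinity, the limit $v\in H^1(\R^3)$ satisfies $M_{\R^3}[v]=M_{\R^3}[Q]$, $E_{\R^3}[v]=E_{\R^3}[Q]$ and $\|\nabla v\|_{L^2}\le\|\nabla Q\|_{L^2}$. To reach a contradiction I would introduce an auxiliary translation parameter $\tilde X(t)$ defined by the \emph{obstacle-free} modulation of \cite{DuRo10} and control it through a local virial identity that exploits the strict convexity of $\Theta$ to substitute for the missing momentum conservation.

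For $t\in D_{\delta_0}$ with $|x(t)|$ large, applying the \cite{DuRo10}-type implicit function theorem to $\underline u$ on $\R^3$ gives a decomposition
\[
\underline u(t,x)=e^{i(\tilde\theta(t)+t)}\bigl[(1+\tilde\rho(t))Q(x-\tilde X(t))+\tilde h(t,x)\bigr],
\]
with $\tilde h$ orthogonal (in the real $L^2$ pairing) to $iQ(\cdot-\tilde X)$, $\Delta Q(\cdot-\tilde X)$ and the $\partial_{x_k}Q(\cdot-\tilde X)$. Because $\Psi\equiv 1$ on the essential support of $Q(\cdot-x(t))$ once $|x(t)|$ is large, Proposition~\ref{decay1 Q} and Lemma~\ref{compac-supp-argu} give
\[
|\tilde X(t)-x(t)|+|\tilde\theta(t)-\theta(t)|+|\tilde\rho(t)-\rho(t)|=O\bigl(e^{-|x(t)|}/|x(t)|\bigr),
\]
and $\|\tilde h-h\|_{H^1(\R^3)}$ is of the same order, where $h$ is the truncated remainder from \eqref{decom-u}. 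The extension $\underline u$ solves \nls on $\R^3$ up to a distribution supported on $\partial\Omega$, whose pairing with the modulation directions (which are exponentially small on $\Omega^c$) is itself $O(e^{-|x(t)|}/|x(t)|)$ by compactness of $K$. Following the calculation of \cite{DuRo10} one then obtains, schematically,
\[
\tilde X'(t)=\frac{c_1}{\|Q\|_{L^2}^2}\,\im\!\int_\Omega\bar u(t)\,\nabla u(t)\,dx+O(\delta(t))+O\bigl(e^{-|x(t)|}/|x(t)|\bigr).
\]

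The local virial serves to control the momentum-type integral above. Choosing a smooth cutoff $\chi$ vanishing near $\Theta$ and a weight $a(x)=\chi(x)\,x\cdot\vec e$ with $\vec e$ a fixed unit vector, I would compute $\frac{d}{dt}\int_\Omega a|u|^2\,dx=-2\im\!\int_\Omega\nabla a\cdot\nabla u\,\bar u\,dx$ and its second derivative. The Dirichlet condition $u|_{\partial\Omega}=0$ together with the strict convexity of $\Theta$ produces a boundary term with a favorable ``Morawetz'' sign, as in \cite{Morawetz61,KiVisnaZhang16}. Combined with the compactness of $K$, which bounds $\int a|u|^2$ uniformly once recentered around $x(t)$, this yields uniform-in-$t$ control of $\im\!\int\bar u\,\nabla u\,dx$ and, after time-averaging, shows that this integral is small along the escape sequence $(t_n)$.

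Integrating the identity for $\tilde X'(t)$ over $[0,t_n]$ then contradicts $|\tilde X(t_n)|\to\infty$, hence $|x(t_n)|\to\infty$. The borderline subcase $\|\nabla v\|_{L^2}=\|\nabla Q\|_{L^2}$ (equivalently $\delta(t_n)\to 0$) is handled separately: Proposition~\ref{uConvQ} identifies $v=e^{i\theta_0}Q(\cdot-y_0)$, and the uniqueness theorem of \cite{DuHoRo08} applied to the $\R^3$-evolution of $v$, itself a compactness-up-to-translation limit of the obstacle orbit, rules out the escape as well. The main obstacle throughout will be precisely this replacement of momentum conservation: one has to track carefully the exponentially small boundary contributions entering the $\R^3$-modulation identity and combine them with the definite-sign term coming from strict convexity, in order to substitute the clean Galilean-boost step used in \cite{DuRo10}.
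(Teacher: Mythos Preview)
Your proposal has a genuine gap at the central step. The identity you write for $\tilde X'(t)$ in terms of the momentum $\im\int_\Omega\bar u\,\nabla u$ plus $O(\delta)$ is not a standard modulation output and you do not justify it; the modulation calculation (either the one in \cite{DuRo10} or Lemma~\ref{derv-of-mod-par} here) only yields $|\tilde X'(t)|=O(\delta(t))$. Moreover, for a solution $O(\delta)$-close to a real profile $Q$, the momentum itself is $O(\delta)$, so even if your identity were true it would carry no more information than $|\tilde X'|=O(\delta)$. Integrating over $[0,t_n]$ then produces $\int_0^{t_n}\delta(t)\,dt$, and nothing in your virial/Morawetz sketch controls this time integral: you only claim smallness of the momentum along the escape sequence $(t_n)$, not of $\int\delta$. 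There is also the issue that $\tilde X$ (and $X$) is only defined on $D_{\delta_0}$, so you cannot integrate $\tilde X'$ across times where $\delta(t)\ge\delta_0$; this needs a separate argument. Finally, your closing case split is inverted: by the paper's Lemma~\ref{suiteDelta-->0}, $|x(t_n)|\to\infty$ forces $\delta(t_n)\to 0$, so the ``borderline'' case is the only case, and identifying the limit as $Q$ via Proposition~\ref{uConvQ} is not by itself a contradiction.

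The paper closes exactly this gap by using the localized virial identity not to control the momentum but to control $\int_\sigma^\tau\delta(t)\,dt$ directly (Lemma~\ref{intdelta}): one gets $\int_\sigma^\tau\delta\le C\bigl(1+\sup_{[\sigma,\tau]}|x|\bigr)(\delta(\sigma)+\delta(\tau))$. Combined with $|x(\tau)-x(\sigma)|\le C\int_\sigma^\tau\delta$ (Lemma~\ref{x(tau)-x(sigma)}, which patches the modulation bound $|X'|\le C\delta$ across the complement of $D_{\delta_0}$ using compactness of $K$), and the equivalence $|x(t_n)|\to\infty\Leftrightarrow\delta(t_n)\to 0$ (Lemma~\ref{suiteDelta-->0}, proved via the \cite{DuRo10} classification and an approximation argument \`a la Theorem~\ref{contruction of v_n if d(x_n)}), one picks $\tau_n$ with $|x(\tau_n)|=\sup_{[0,\tau_n]}|x|$ and $\delta(\tau_n)\to 0$, and the two inequalities feed back on each other to bound $|x(\tau_n)|$. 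Your auxiliary $\R^3$-modulation $\tilde X$ does appear in the paper, but only inside Lemma~\ref{lem-aproox-X-delta} to prove the pointwise bound $e^{-|X(t)|}/|X(t)|\le C\delta(t)$, which is what turns the raw estimate $|X'|\le C(\delta+e^{-|X|}/|X|)$ of Lemma~\ref{derv-of-mod-par} into the usable $|X'|\le C\delta$.
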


We start with the following lemma. 
\begin{lem}
\label{suiteDelta-->0}
Let $u$ be as in the Proposition \ref{bddX}. Let $\{t_n\}$ be a sequence of time, such that $t_n \longrightarrow + \infty.$ Then $ | x(t_n) | \longrightarrow + \infty$ as $n \rightarrow+\infty,$ if and only if   $\delta(t_n) \longrightarrow 0$ as $n $ goes to $+ \infty . $
\end{lem}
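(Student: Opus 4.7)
The plan is to prove each direction separately, using throughout the compactness of $K$ provided by Proposition~\ref{compac-prop}.

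For the implication $\delta(t_n)\to 0\Rightarrow|x(t_n)|\to\infty$: mass and energy conservation give $M_{\R^3}[\uu(t_n)]=M[Q]$ and $E_{\R^3}[\uu(t_n)]=E[Q]$, so Proposition~\ref{uConvQ} applied to $\uu(t_n)$ yields $(\theta_n,X_n)\in\R\times\R^3$ with $\|\uu(t_n)-e^{i\theta_n}Q(\cdot-X_n)\|_{H^1(\R^3)}\to 0$. Since $\uu(t_n)$ vanishes on the compact obstacle $\Theta$, this forces $\|Q(\cdot-X_n)\|_{L^2(\Theta)}\to 0$, and the lower bound \eqref{eq-compac-supp-argu-2} implies $|X_n|\to\infty$. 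By compactness of $K$, after extracting a subsequence, $\uu(t_n,\cdot+x(t_n))\to V$ in $H^1(\R^3)$; writing this limit in two ways and using $\|V\|_{L^2}=\|Q\|_{L^2}>0$, so that $V$ cannot vanish, I conclude that $x(t_n)-X_n$ remains bounded, hence $|x(t_n)|\to\infty$.

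For the converse $|x(t_n)|\to\infty\Rightarrow\delta(t_n)\to 0$: by compactness I extract $\uu(t_n,\cdot+x(t_n))\to V$ in $H^1(\R^3)$ along a subsequence. Since $|x(t_n)|\to\infty$ and $\Theta$ is bounded, the translated obstacle $\Theta-x(t_n)$ escapes every ball, so $V$ lies in $H^1(\R^3)$ with no obstacle constraint, and conservation combined with preservation of the sub-threshold condition along the flow of $u$ yields $M[V]=M[Q]$, $E[V]=E[Q]$, and $\|\nabla V\|_{L^2}\leq\|\nabla Q\|_{L^2}$. By strong $H^1$-convergence $\delta(t_n)\to\bigl|\|\nabla Q\|_{L^2}^2-\|\nabla V\|_{L^2}^2\bigr|$, so the statement reduces to showing $\|\nabla V\|_{L^2}=\|\nabla Q\|_{L^2}$. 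Arguing by contradiction, assume the inequality is strict and let $w$ be the $\R^3$-NLS solution with $w(0)=V$. The Duyckaerts-Roudenko threshold classification then gives two possibilities: either (a) $w$ scatters in both time directions, or (b) $w$ coincides with $Q^-$ up to the symmetries of $\R^3$-NLS. In case (a), a stability argument comparing $\NNls$ on the expanding domains $\Omega_n:=\Omega-x(t_n)$ with $w$ on $\R^3$ — via Lemma~\ref{stability}, using that $\partial\Omega_n$ escapes to infinity while $w$ stays concentrated, so that the boundary mismatch yields a vanishing error in $N^1$ — transfers the finite Strichartz norm of $w$ to $u$ on $[t_n,\infty)$ for large $n$, contradicting the non-scattering hypothesis of Proposition~\ref{bddX}.

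The main obstacle is ruling out case (b): a $Q^-$-type limit would mean that $u$, after translation by $x(t_n)$, asymptotically mimics a heteroclinic orbit between linear scattering and the soliton, which is precisely the threshold dynamics that, as emphasized in the introduction, cannot exist for $\NNls$. Eliminating this scenario relies on the uniqueness theorem of Duyckaerts-Holmer-Roudenko together with an argument exploiting the Dirichlet boundary condition and the absence of Galilean invariance for $\NNls$: iterating the stability comparison of the previous paragraph forward in time would force $u$ itself to be a translated $Q^-$-like solution in $\Omega$, and uniqueness then produces a contradiction with the fact that the $\R^3$-profile of $Q^-$ does not vanish on $\partial\Omega$. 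This is the genuinely delicate step of the proof.
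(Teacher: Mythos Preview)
Your first direction and the scattering sub-case (a) of the second direction are fine and essentially match the paper's argument (your treatment of the implication $\delta(t_n)\to 0\Rightarrow|x(t_n)|\to\infty$ is slightly more hands-on than the paper's short contradiction via $v_0\in H^1_0(\Omega)$, but equally valid).

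The gap is in your handling of case (b). Your sketch (``iterating the stability comparison forward in time would force $u$ itself to be a translated $Q^-$-like solution in $\Omega$, and uniqueness then produces a contradiction'') does not work as written: the stability lemma requires the approximate solution to have \emph{finite} Strichartz norm on the interval under consideration, and $Q^-$ has infinite $L^5_{t,x}$ norm on $[0,\infty)$ since it converges to the soliton there. So you cannot run stability forward on $[t_n,\infty)$ in this case, and without that there is no mechanism forcing $u$ to equal a translated $Q^-$ on $\Omega$. The uniqueness theorem of \cite{DuHoRo08} is not the right tool here.

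The paper's resolution is simpler and you have essentially already set it up. In the Duyckaerts--Roudenko classification, case (b) comes with a sign $\varepsilon\in\{\pm1\}$: one has $w(t)=e^{i\theta}Q^-(\varepsilon t+\tau)$. If $\varepsilon=-1$, then $w$ scatters for \emph{positive} time (because $Q^-$ scatters as $t\to-\infty$), and your case-(a) stability argument applies verbatim to contradict non-scattering of $u$. If $\varepsilon=+1$, then $w$ scatters for \emph{negative} time, and the same approximation/stability argument (Theorem~\ref{contruction of v_n if d(x_n)} type construction, as in the proof of \cite[Theorem~4.1]{KiVisnaZhang16}) run \emph{backward} from $t_n$ gives $\|u\|_{S(-\infty,t_n)}\leq C\|Q^-\|_{S(-\infty,\tau)}$ with $C$ independent of $n$. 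Letting $n\to\infty$ yields $\|u\|_{S(\R)}<\infty$, so $u$ scatters in both directions, contradicting the compactness assumption. No uniqueness theorem is needed at this stage of the paper.
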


\begin{proof}
We first prove that $\delta(t_n) \longrightarrow 0$ implies that $|x( t_n ) | \longrightarrow+ \infty$ as $ n \rightarrow + \infty.$  If not, $x(t_n)$ converges (after extraction) to $x_\infty$ in $\R^3.$ By the compactness of the closure of $K,$ $\uu(t_n, \cdot+x(t_n))$ converges in $H^1(\R^3)$ to some $v_0(\cdot-x_\infty) \in H^1(\R^3).$ By the assumption \eqref{ME_u=ME_Q+Grad} and the fact that $\delta(t_n) \longrightarrow 0, \; E_{\R^3}[v_0]=E_{\R^3}[Q], M_{\R^3}[v_0]=M_{\R^3}(Q)$ and $\left\| \nabla v_0 \right\|_{L^2(\R^3)}= \left\| \nabla Q \right\|_{L^2(\R^3)}.$
By Proposition \ref{uConvQ}, there exist $\theta_0 \in \R$ and $ x_0 \in \R^3$ such that $v_0=e^{i \theta_0} Q(\cdot-x_0).$ On the other hand, if $x+x(t_n) \in \Omega,$ then $u(t_n,x+x(t_n) )$ converges in $H^1_0(\Omega),$ as $H^1_0(\Omega)$ is a close subspace of $H^1(\R^3).$ Thus, the restriction of $v_0(\cdot-x_\infty)$ to $\Omega$ belongs to $H^1_0(\Omega),$ which contradicts the fact that $e^{i \theta_0}Q(\cdot + x_{\infty}-x_0) \notin H^1_0(\Omega)$.    \\

Next, we prove that $ | x(t_n) | \longrightarrow + \infty$ as $n \rightarrow+\infty$ implies that $\delta(t_n) \longrightarrow 0$ as $n $ goes to $ + \infty . $ \\ 
We argue by contradiction, assuming (after extraction) that  $$\delta(t_n) \xrightarrow[ n \rightarrow+\infty] {}\delta_{\infty}>0 \; \text { and } \; t_n  \xrightarrow[ n \rightarrow+\infty] {} t_{\infty}\in \R \cup \{\pm \infty\}.$$
By the continuity of $x(t),$ using $|x(t_n)| \rightarrow + \infty,$ we must have $t_{\infty} \in \{\pm \infty\}.$ \\
Assume, say, $t_{\infty}=+ \infty,$  and let $\varphi_{\infty}= \displaystyle \lim_{n \rightarrow + \infty} \uu(t_n,x+x(t_n))$ in $H^1(\R^3)$ (after extraction). We have 
\begin{align*}
E_{\R^3}[\varphi_{\infty}]=E_{\R^3}[Q], \quad M_{\R^3}[\varphi_{\infty}]=M_{\R^3}[Q] , \quad  \int_{\R^3}  \left| \nabla \varphi_{\infty} \right|^2= \int_{\R^3} \left|   \nabla Q \right|^2 - \delta_{\infty} < \int_{\R^3} \left| \nabla Q \right|^2   .
\end{align*}
Let $\varphi$ be the solution of \nls  with the initial datum $\varphi_{\infty}$ at $t=0.$ By \cite{DuRo10}, $\varphi$ is global and one of the following holds:
\begin{enumerate}
\item $\varphi$ scatters in both time directions.  
\item $\exists \, \tau, \theta \in \R$ and $ \varepsilon \in \{ \pm 1 \}$  such that $\varphi(t)= e^{i \theta} U_{-}(\varepsilon \,  t + \tau) ,$ where $U_{-}(t) \xrightarrow[t \rightarrow + \infty ]{ }  Q $  and $U_{-}$~scatters for negative time.
\end{enumerate}

In case $(1)$ or in the case $(2)$ with $\varepsilon= -1,$ one can prove by approximation, following the proof of Theorem $4.1$ in \cite{KiVisnaZhang16}, that $u$ scatters for positive time. \\ 
In case $(2)$ with $\varepsilon=+1$, we obtain for large $n,$ with the same argument
$$ \left\| u \right\|_{S(-\infty,t_n)}  \leq C \;  \left\| U_{-} \right\|_{S(-\infty, t_{\infty})}, \quad \text{ where  $C$ is a fixed constant.}  $$ 
Letting $n$ go to $+ \infty,$ we see that $u$ has a finite Strichartz norm, thus, $u$ scatters also in both time directions, which contradicts the fact that $u$ satisfies \eqref{compacUbar} and \eqref{ME_u=ME_Q+Grad}.
\end{proof}
\begin{lem}
\label{lem-aproox-X-delta}
Let $X(t)$ be as in \eqref{decom-u}. Taking a smaller $\delta_0$ if necessary, there exists $C>0$ such that
\begin{equation}
\label{aproox-delta}
   \frac{ e^{- |X(t)|} }{\left|X(t) \right| } \leq C \delta(t)  \quad \mbox{for any } \;  t\in D_{\delta_0}.
\end{equation}
\end{lem}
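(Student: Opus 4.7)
The plan is to combine a sharp computation of the reference energy
$$
A_0 := E_{\R^3}[Q\Psi_X] - E_{\R^3}[Q], \qquad A := \int_{\R^3} Q^2(1-\Psi_X^2)\,dx \asymp e^{-2|X|}/|X|^2,
$$
with energy conservation $E_{\R^3}[\uu] = E_{\R^3}[Q]$ and the non-negativity of $\Phi_X(\th)$, in order to produce a lower bound $\rho(t)^2 \gtrsim A(t)$. This will then be converted into the desired $\delta(t) \gtrsim e^{-|X(t)|}/|X(t)|$ via the Step~2 identity $|\rho| = \delta/(2\|\nabla Q\|^2_{L^2(\R^3)}) + O(\dt^2 + A)$ established inside the proof of Proposition~\ref{lem-equiv-modu-param}. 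We work at fixed $t \in D_{\delta_0}$ with the decomposition $v = (1+\rho)Q\Psi_X + \th = Q\Psi_X + g$, where $g := \rho Q\Psi_X + \th$.

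The first step is a Pohozhaev-type identity: testing $-\Delta Q + Q - Q^3 = 0$ against $Q\Psi_X^2$, integrating by parts, and using $\|\nabla Q\|^2 = \|Q\|_{L^4}^4 - \|Q\|^2$ yields
$$
\|\nabla(Q\Psi_X)\|^2 - \|\nabla Q\|^2 = A - \int Q^4(1-\Psi_X^2) + \int Q^2|\nabla\Psi_X|^2.
$$
Combining this with $\|Q\Psi_X\|_{L^4}^4 - \|Q\|_{L^4}^4 = -\int Q^4(1-\Psi_X^4)$ and using $(1-\Psi_X^4) = (1-\Psi_X^2)(1+\Psi_X^2)$ produces
$$
A_0 = \frac{A}{2} + \frac{1}{2}\int Q^2|\nabla\Psi_X|^2 + O(e^{-4|X|}/|X|^4).
$$
The correction $\tfrac12\int Q^2|\nabla\Psi_X|^2$ is strictly positive and, by the lower decay bound for $Q$ from Proposition~\ref{decay1 Q} applied on the compact support of $\nabla\Psi$, bounded below by $c_\star A$ uniformly in the direction $\hat X := X/|X|$.

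Substituting this into the identity $A_0 + A_L(g) + A_{NL}(g) = 0$ coming from $E[\uu]=E[Q]$ (as derived in Step~3 of the proof of Proposition~\ref{lem-equiv-modu-param}), using mass conservation $\int Q\Psi_X g_1 = (A-\|g\|^2)/2$ to telescope the $A/2$ contributions, and bounding the compactly-supported linear terms of $A_L(g)$ via the refined estimate $\|g\|_{L^2(\operatorname{supp}\nabla\Psi_X)} \leq C(|\rho|\sqrt A + \|\th\|_{L^2})$ gives
$$
\Phi_X(g) = -\frac{1}{2}\int Q^2|\nabla\Psi_X|^2 + O(\text{small}).
$$
Decomposing $\Phi_X(g) = \rho^2\Phi_X(Q\Psi_X) + 2\rho\, B_X(Q\Psi_X,\th) + \Phi_X(\th)$ and computing $\Phi_X(Q\Psi_X) = -4M[Q] + O(A)$ via the Pohozhaev identities $\|\nabla Q\|^2 = 3\|Q\|^2$, $\|Q\|_{L^4}^4=4\|Q\|^2$, together with $B_X(Q\Psi_X,\th) = \rho M[Q] - A/2 + O(\text{small})$ (from the orthogonality \eqref{rho-ortho-DQh}--\eqref{modu-ortho-dQh} of $\th$ and the mass identity $\int Q\Psi_X\thre = -\rho M[Q] + A/2 + O(\text{small})$), gives
$$
\Phi_X(g) = -2\rho^2 M[Q] - \rho A + \Phi_X(\th) + O(\text{small}).
$$
Equating the two expressions for $\Phi_X(g)$ and invoking $\Phi_X(\th) \geq c\|\th\|^2_{H^1(\R^3)} \geq 0$ from Lemma~\ref{lem-coercivity-Psi} yields
$$
2\rho^2 M[Q] + \rho A \geq c\,A - O(\text{small}).
$$
Since $|\rho|, \|\th\|_{H^1} \leq C(\delta + \sqrt A)$ from Proposition~\ref{lem-equiv-modu-param}, Young's inequality absorbs the $\rho A$, $\delta\sqrt A$ and $\sqrt A\,\|\th\|$ contributions; taking $\delta_0$ small enough produces $\rho^2 \gtrsim A$, and Step~2 of Proposition~\ref{lem-equiv-modu-param} converts this into $\delta \gtrsim |\rho| - O(A) \gtrsim e^{-|X|}/|X|$.

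The main obstacle is keeping the error terms strictly subleading with respect to $\int Q^2|\nabla\Psi_X|^2 \asymp A$: a naive bound $|\int (\text{compactly supported})\,g_1| \leq C\sqrt A\,\|g\|_{L^2}$ would generate an $O(A)$ error that would swamp the leading term. The refinement exploits that $Q\Psi_X$ itself has size $O(\sqrt A)$ on the compact supports of $\nabla\Psi_X$, $\Delta\Psi_X$ and $(1-\Psi_X^2)$, so only the $\th$-part of $g$ contributes a $\sqrt A\,\|\th\|$-error; this last term is then handled by Young's inequality balanced against the positive $\Phi_X(\th) \geq c\|\th\|^2_{H^1}$ on the left-hand side.
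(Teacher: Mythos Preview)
Your approach is genuinely different from the paper's and, although the underlying idea is natural, it does not close as written. The paper proceeds by introducing an \emph{auxiliary} modulation on $\R^3$ (the decomposition $e^{-i\theta_1-it}\uu = (1+\rho_1)Q(\cdot-X_1) + \hh$ from \cite{DuRo10}), for which the equivalences $|\rho_1|\approx\|\hh\|_{H^1}\approx\delta$ hold \emph{without} any $e^{-|X|}$ correction. The constraint $\uu=0$ on $\Omega^c$ then forces $\|Q(\cdot-X_1)\|_{L^2(\Omega^c)}\le C\delta$ directly, giving $e^{-|X_1|}/|X_1|\le C\delta$; a separate comparison argument shows $|X-X_1|\le C$, and the estimate transfers to $X$. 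No delicate balance of constants is needed.

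Your route attempts to extract the conclusion purely from the $\Omega$-modulation and the energy identity, but the absorption step fails at the level of constants. After your (correct) computation of $A_0$ and the mass cancellation of the $A/2$ contributions, the compactly supported linear terms $2\int\nabla Q\cdot\nabla\Psi_X\,\thre + \int Q\Delta\Psi_X\,\thre$ are bounded by $C_1\sqrt{A}\,\|\th\|_{H^1}$ with a constant $C_1$ depending only on $\Psi$ and the asymptotics of $Q$. Young's inequality turns this into $\tfrac{c}{2}\|\th\|^2 + \tfrac{C_1^2}{2c}A$ (with $c$ the coercivity constant from Lemma~\ref{lem-coercivity-Psi}); after absorbing the $\|\th\|^2$-part into $\Phi_X(\th)$ you must compare $\tfrac12\int Q^2|\nabla\Psi_X|^2\ge c_\star A$ against $\tfrac{C_1^2}{2c}A$. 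There is no reason why $c_\star c>C_1^2$: all three constants are fixed positive numbers determined by $Q$ and the fixed cutoff $\Psi$, and none of them becomes small when you shrink $\delta_0$. Equivalently, substituting $\|\th\|\le C(|\rho|+\sqrt A)$ from Proposition~\ref{lem-equiv-modu-param} gives $\sqrt{A}\,\|\th\|\le C|\rho|\sqrt A + CA$, and the $CA$ term is of the \emph{same} order as your leading positive term. The gap is therefore not in the algebra but in the absence of any smallness parameter in the critical inequality; as written the argument only yields a conditional statement of the form ``if $c_\star c>C_1^2$ then \eqref{aproox-delta} holds''.
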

\begin{proof}
Note that, by Proposition \ref{compac-prop}, taking a smaller $\delta_0$ if necessary, we can assume $|X(t)|\geq C$ for an arbitrarily large constant $C>0$. The proof consists of $3$ steps.
\begin{itemize}
\item \underline{\bf Step 1:} The estimate of $\delta(t)$ with respect to an auxiliary modulation parameter $X_1(t)$ on $\R^3.$ Let  $\uu(t) \in H^1(\R^3)$ be the extension of $u$ to $\mathbb{R}^3$ defined as in \eqref{defunderlineU}, we then have 
 \begin{equation}
M _{\R^3}[\uu]=M _{\R^3}[Q], \quad E_{\R^3}[\uu]=E _{\R^3}[Q], \quad \text{and} \quad \int_{\R^3} \left| \nabla \uu \right|^2 < \int_{\R^3} \left| \nabla Q \right|^2 .
\end{equation}
Arguing as in Section $3,$ but on the whole space $\R^3$, see \cite[Lemma 4.1 and 4.2]{DuRo10}, there exist  $\theta_1(t)$ and $X_1(t) ,$  $C^1$ functions of $t$, such that 
\begin{equation}
\label{decom-ubar}
 e^{-i\theta_1(t)-it}\uu(t,x+X_1(t))=(1+\rho_1(t))Q(x)+ \widetilde{h}(t,x),
\end{equation}
where
\begin{align}
  \rho_1(t) = \re \frac{ e^{-i \theta_1-it} \int_{\R^3} \nabla \uu(t,x+X_1(t)). \nabla Q(x) dx}{ \left\| \nabla  Q \right\|_{L^2(\R^3)}^2} -1 , \\ 
  \label{equiv-modu-ubar-para}
  \left| \rho_1(t) \right| \approx \left| \int_{\R^3} Q  \, \hh \, dx  \right| \approx \left\|  \hh \right\|_{H^1(\R^3)} \approx \delta(t) .
\end{align}
In this step we prove
\begin{equation}
\label{Q+X-1+delta}
     \frac{e^{- |X_1(t)|}}{\left| X_1(t) \right| } \leq C \delta(t).
\end{equation}

By \eqref{decom-ubar},  $x \in \Omega^c$ implies  $(1+\rho_1(t))Q(x-X_1(t))+ \hh(t,x-X_1(t))=0,$ i.e.,  
$$\left\| (1+\rho_1(t))Q(x-X_1(t))+ \hh(t,x-X_1(t)) \right\|_{L^2( \Omega^c)} =0. $$ 
By \eqref{equiv-modu-ubar-para}, we have 
\begin{equation}
\label{Q-comp-Omega-with-delta}
  \int_{\Omega^c} | Q(x-X_1(t))|^2 \, dx  \leq C \, \delta(t)^2.
  \end{equation}
By \eqref{x0large}, one can see that $\left| X_1(t) \right|$ is large. For $x \in \Omega^c,$ we have $$ \frac 12 |X_1(t)|   \leq|x-X_1(t)| \leq 2 |X_1(t)|.$$ 
From Lemma \ref{decay1 Q}, we have 
$$ Q(x)= \frac{e^{-|x|}}{|x|} \bigg( a+ O(\frac{1}{|x|^{\frac 12}}) \bigg), \quad \text{for some}  \; a>0.$$
Using  \eqref{Q-comp-Omega-with-delta}, we obtain  \eqref{Q+X-1+delta}.  \\

\item \underline{\bf Step 2:} Comparison of $X(t)$ and $X_1(t). $\\

We prove that there exists $C>0$ such that 
\begin{equation}
\label{compa-X-X_1}
  \left| X(t)-X_1(t)\right| \leq C \quad  \forall t \in D_{\delta_0}.
\end{equation}
We fix $t\in D_{\delta_0}.$ We can assume 
\begin{equation}
\label{X(t)-X1(t)grand}
|X(t)-X_1(t)| \geq 1,
\end{equation} 
or else we are done.

Let $x\in \Omega,$ by \eqref{decom-ubar} and \eqref{decom-u-trans}, we have 
\begin{align*}
    u(t,x)&=e^{i\theta(t)+it} (1+\rho (t))Q(x-X(t)) \Psi(x)+ e^{i\theta(t)+it} h(t, x)\\
    &=e^{i\theta_1(t)+it} (1+\rho_1 (t))Q(x-X_1(t)) + e^{i\theta_1(t)+it}\,  \hh(t, x).
\end{align*}
Using \eqref{equiv-modu-ubar-para} and Proposition \ref{lem-equiv-modu-param} , we have 
\begin{align*}
  \int_{|x-X(t)|<1}  \left|Q(x-X(t)) \Psi(x)e^{i \theta(t)} - Q(x-X_1(t))e^{i\theta_1(t)}\right|^2 \leq C \left (\delta^2(t)+ \frac{e^{-2|X(t)|}}{|X(t)|^2}  \right).
\end{align*}
Recall that $|X_1(t)|$ and $|X(t)|$ are large and $\Psi(x)=1$ for large $|x|.$ \\ 
\begin{align*}
    \int_{|x|<1} |Q(x)|^2 dx &\leq C \int_{|x-X(t)|<1} |Q(x-X_1(t))|^2 dx + C \delta^2(t)+ C \,  \frac{e^{-2|X(t)|}}{|X(t)|^2} 
    \\ & \leq \int_{|x-X(t)|<1} \frac{e^{-2|x-X_1(t)|}}{|x-X_1(t)|^2} \, dx + C \delta^2(t)+ C \, \frac{e^{-2|X(t)|}}{|X(t)|^2} .
\end{align*}
Using the fact that $|x-X_1(t)|\geq |X(t)-X_1(t)|-|x-X(t)|\geq   |X(t)-X_1(t)|-1,$ in the support of the integral in the last line, we obtain
\begin{align*}
     \int_{|x|<1} |Q(x)|^2 dx  \leq C  \frac{e^{-2|X(t)-X_1(t)|}}{|X(t)-X_1(t)|^2} + C \, \delta^2(t) + C \, \frac{e^{-2|X(t)|}}{|X(t)|^2}  .
\end{align*}
Recall that, by Lemma \ref{suiteDelta-->0} if $|X(t)| $ is large, then $\delta(t)$ and $\, \frac{e^{-2|X(t)|}}{|X(t)|^2} $ are small. By \eqref{X(t)-X1(t)grand}, we get 
$$\frac 12 \int_{|x|<1} |Q(x)|^2 \, dx \leq C  \;  \frac{e^{-2|X(t)-X_1(t)|}}{|X(t)-X_1(t)|^2} \leq C e^{-2|X(t)-X_1(t)|},$$
which yields
\begin{align*}
    |X(t)-X_1(t)| \leq C - \log\left( \frac 12 \int_{|x|<1} |Q(x)|^2 \, dx \right) .
\end{align*}
Thus, $|X(t)-X_1(t)|$ is bounded.\\

\item \underline{\bf Step 3:} Conclusion of the proof. \\ 
From Step 2 we have $ |X(t)-X_1(t)|\leq C,$ and since $|X(t)|$ is large, we have   
\begin{equation}
\frac 12 |X(t)|  \leq |X(t)|-|X(t)-X_1(t)|    \leq |X_1(t)|\leq |X_1(t)-X(t)|+|X(t)|\leq 2 |X(t)| . 
\end{equation}
By Step $1$, we get $ \delta^2(t) \geq C \, \frac{e^{-2|X_1(t)|}}{|X_1(t)|^2} ,$ which implies 
$$\delta^2(t) \geq C \, \frac{ e^{-2|X(t)|}}{|X(t)|^2},$$
concluding the proof of Lemma \ref{lem-aproox-X-delta}.
\end{itemize}
\end{proof}

\begin{lem}
\label{intdelta}
Let $u$ be a solution of \Nls satisfying the assumptions of  the Proposition \ref{bddX}. Then there exists a constant $C>0$ such that  if $0 \leq \sigma \leq \tau$ 
\begin{equation}
\label{integ-delta}
\int_{\sigma}^{\tau} \delta(t)  \leq C  \left[ 1+ \sup_{ t \in [\sigma, \tau]} |x(t)| \right] \left( \delta(\sigma)+ \delta(\tau) \right)   .
\end{equation}
\end{lem}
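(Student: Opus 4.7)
I will prove the estimate via a localized virial identity for $u$ on the exterior domain, with truncation radius $R \sim \sup_{[\sigma,\tau]} |x(t)|$. The three key inputs are: the strict convexity of $\Theta$, which gives a favorable sign for the boundary contribution; the modulation orthogonality \eqref{modu-ortho-dQh-im}, which upgrades the pointwise bound on the virial multiplier to be linear in $\delta(t)$; and the algebraic identity $4\|\nabla u\|_{L^2(\Omega)}^2 - 3\|u\|_{L^4(\Omega)}^4 = 2\delta(t)$, which follows from the threshold $E[u]=E_{\R^3}[Q]$ together with the Pohozaev identities. Unlike \cite{DuRo10}, where momentum conservation allows one to work without truncation, here the truncation at scale $R \sim \sup|x(t)|$ is responsible for the factor $(1+\sup|x(t)|)$ on the right-hand side.

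Set $A := \sup_{t\in[\sigma,\tau]}|x(t)|$, $R := A + R_0$ with $R_0$ a large constant, and choose a smooth radial function $\phi_R$ with $\phi_R(x)=|x|^2/2$ on $\{|x|\le R\}$, $\phi_R$ constant for $|x|\ge 2R$, $|\nabla\phi_R|\le CR$, $|\nabla^2\phi_R|\le C$, $|\Delta^2\phi_R|\le CR^{-2}$. Define
\[
V_R(t) := 2\,\im \int_\Omega \nabla\phi_R(x)\cdot\nabla u(t,x)\,\overline{u(t,x)}\,dx.
\]
For $t\in D_{\delta_0}$ I substitute the decomposition \eqref{decom-u-trans}, integrate by parts (no boundary contribution, since $Q(\cdot-X)\Psi$ and $h$ vanish on $\partial\Omega$), and apply \eqref{modu-ortho-dQh-im} to kill the leading term $\int h_2\,\Delta\phi_R\,Q(x-X)\Psi(x)\,dx$ (up to an exponentially small error from $\Delta\phi_R-3$, controlled by Lemma \ref{compac-supp-argu}). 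The remaining linear-in-$h$ piece, $-4(1+\rho)\int h_2\,\nabla\phi_R(x)\cdot\nabla(Q(x-X)\Psi(x))\,dx$, is estimated by writing $\nabla\phi_R(x)=X(t)+(x-X(t))$ on the effective support of $Q(\cdot-X)\Psi$; using Proposition \ref{lem-equiv-modu-param} ($\|h\|_{H^1}\lesssim\delta(t)$) and $|X(t)|\le A+O(1)$ (Corollary \ref{x=X} and Lemma \ref{lem-aproox-X-delta}) yields $|V_R(t)|\le C(1+A)\delta(t)$. For $t\notin D_{\delta_0}$ the trivial bound $|V_R(t)|\le CR\le C(1+A)$, combined with $\delta(t)\ge\delta_0$, gives the same conclusion.

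Differentiating $V_R$ via \Nls and integrating by parts gives
\begin{multline*}
V_R'(t) = 4\,\re\sum_{j,k}\!\int_\Omega \partial^2_{jk}\phi_R\,\partial_j u\,\partial_k\bar u\,dx - \int_\Omega \Delta^2\phi_R\,|u|^2\,dx \\
- \int_\Omega \Delta\phi_R\,|u|^4\,dx - 2\!\int_{\partial\Omega}|\partial_n u|^2\,\partial_n\phi_R\,d\sigma.
\end{multline*}
Since $0\in\Theta\subset B(0,1)\subset\{|x|<R\}$ and $\Theta$ is strictly convex, $\nabla\phi_R(x)=x$ on $\partial\Omega$ and $x\cdot n(x)<0$ (with $n$ the outward normal from $\Omega$), so the boundary term is $\ge 0$ and is discarded. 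In $\{|x|\le R\}$ the integrand is $4|\nabla u|^2 - 3|u|^4$, which globally equals $2\delta(t)$ by the algebraic identity above. Tail contributions from $\{|x|>R\}$ and the $\Delta^2\phi_R$ term are controlled by the $H^1$-approximation $\uu\simeq Q(\cdot-X_1(t))$ of Lemma \ref{lem-aproox-X-delta} together with the exponential decay of $Q$ and $|X_1(t)|\le A+O(1)$, giving an error $O(e^{-cR_0}+\delta(t)^2+R^{-2})$. Hence $V_R'(t)\ge 2\delta(t) - C(e^{-cR_0}+\delta(t)^2+R^{-2})$, and integrating from $\sigma$ to $\tau$, applying the bound $|V_R(t)|\le C(1+A)\delta(t)$, absorbing $\int\delta^2 \le \delta_0\int\delta$ into the left-hand side for $\delta_0$ small, and balancing $R_0$ so the $(\tau-\sigma)$-factor multiplying the tail error is reabsorbed into $(1+A)(\delta(\sigma)+\delta(\tau))$ (by using once more the $H^1$-approximation to estimate the tail in an integrated sense) yields the claimed estimate. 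The main obstacle is precisely this last balancing: preventing the time-integrated tail error from growing linearly in $\tau-\sigma$ requires that the quantitative $H^1$-closeness of $\uu$ to its soliton approximation be used not only pointwise in $t$ but also at the level of the time integral.
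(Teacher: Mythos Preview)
Your overall architecture---localized virial with truncation at scale $R\approx 1+\sup|x(t)|$, convexity for the boundary term, modulation for the bound $|V_R(t)|\le CR\,\delta(t)$---matches the paper. The gap is precisely the one you flag at the end, and your proposed fix does not close it.

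The issue is your tail error in $V_R'$. You obtain $V_R'(t)\ge 2\delta(t)-C\bigl(e^{-cR_0}+\delta(t)^2+R^{-2}\bigr)$, where the first and third terms are constants in $t$. Integrated over $[\sigma,\tau]$ they produce $(\tau-\sigma)\bigl(e^{-cR_0}+R^{-2}\bigr)$, and there is no mechanism to bound $\tau-\sigma$ by $(1+A)\bigl(\delta(\sigma)+\delta(\tau)\bigr)$; ``balancing $R_0$'' or appealing to integrated $H^1$-closeness does not convert a constant-in-time error into one proportional to $\delta(t)$. The paper avoids this entirely by the observation that the localized virial of the exact $\R^3$ soliton $e^{i\theta_0+it}Q(\cdot-x_0)$ is constant in time (here one uses that for $R$ large the weight is pure $|x|^2$ near the obstacle, so the virial can be read as an $\R^3$ integral), whence the tail functional $A_R$ vanishes identically on it. Writing $u=e^{i\theta+it}Q(\cdot-X)\Psi+g$ with $\|g\|_{H^1}\le C\delta$ and subtracting $A_R\bigl(e^{i\theta+it}Q(\cdot-X)\bigr)=0$ then shows that every term in $A_R(u(t))$ carries at least one factor of $g$, giving $|A_R(u(t))|\le C\bigl(e^{-R_0}\delta+\delta^2\bigr)\le \varepsilon\,\delta(t)$ pointwise. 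No $(\tau-\sigma)$-growth arises and the integration closes directly.

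A second, smaller omission: for times with $\delta(t)\ge\delta_0$ you do not control the tail of $V_R'$. The paper handles this case separately using only the precompactness of $K$: for $R\ge |x(t)|+R_1$ the mass of $u$ outside radius $R$ is below $\varepsilon\delta_1\le\varepsilon\delta(t)$. Finally, your use of the orthogonality \eqref{modu-ortho-dQh-im} in bounding $V_R(t)$ is correct but unnecessary: since the pure-soliton contribution to $\im\int\nabla\phi_R\cdot\nabla u\,\bar u$ is real, one has $|V_R(t)|\le CR\|g\|_{H^1}\le CR\,\delta(t)$ directly.
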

\begin{proof}
Let $\varphi$ be a smooth radial function such that
\begin{equation*}
    \varphi(x):=\begin{cases}
    |x|^2 \; \text{ if } &|x| \;  \leq 1, \\
    0 \;  \quad \text{ if } &|x| \geq 2.
    \end{cases}
\end{equation*}
Consider the localized variance,   
\begin{equation}
\label{def-Y-R}
 \mathcal{Y}_R(t)= \int_{\Omega} R^2 \varphi\left(\frac xR \right) |u(t,x)|^2 \, dx,
 \end{equation}
where $R$ is large positive constant, to be specified later. 
Then, 
\begin{equation}
\label{derv1-Y-R}
 \mathcal{Y}'_{R}(t)=2R\im \intom \bar{u} \, \nabla \varphi\left(\frac{x}{R}\right) \cdot \nabla u \, dx ,\quad  \; \left| \mathcal{Y}'_{R}(t) \right| \leq C\, R .
 \end{equation}
Furthermore, $$  \mathcal{Y}''_{R}(t)= 8 \int_{\Omega} \left| \nabla u \right|^2 \, dx-6 \int_{\Omega} |u|^4+ A_R(u(t)) \, dx -2  \intb \left| \nabla u \right|^2 x \cdot \nn \, d\sigma(x),$$
where $\nn$ is the outward normal vector and 
  \begin{multline}
\label{def-A_R}
A_R(u(t)):= 4 \sum_{j \neq k }\intom \frac{\partial^2 \varphi}{\partial x_j \partial x_k} \left(\frac xR \right) \frac{ \partial u }{\partial x_j} \frac{\partial \bar{u} }{\partial x_k} + 4 \sum_{j} \intom \left( \frac{\partial^2 \varphi}{\partial x_j^2 } \left(\frac xR \right) -2\right) \left| \partial_{x_j} u \right|^2\\   - \frac{1}{R^2} \intom |u|^2 \Delta^2\varphi \left(\frac xR \right) - \intom \left( \Delta \varphi\left(\frac xR \right)-6\right) |u|^4.  \qquad \qquad \qquad \qquad \qquad
\end{multline}
As $\partial \Omega$ is convex and $0\in \Omega,$ one can see that $x \cdot \nn \leq 0 ,$ for all $x\in \partial \Omega.$ Thus, 
$$-2 \int_{\partial \Omega}  \left| \nabla u \right|^2 \, x \cdot \nn \;d\sigma(x)=2\int_{\partial \Omega}\left| \nabla u \right|^2 \, |x \cdot \nn| \; d\sigma(x). $$
Using the fact $\left\|Q\right\|_{L^4}^4=\frac 43 \left\| \nabla u \right\|_{L^2}^2$ and $E[u]=E_{\R^3}[Q],$ we have 
$ 8 \left\| \nabla u \right\|_{L^2}^2-6 \left\| u \right\|_{L^4}^4=4 \delta(t) ,$ 
which yields 
\begin{equation}
\label{deriv2-Y}
 \mathcal{Y}''_{R}(t)= 4 \delta(t)+ A_R(u(t))+ 2  \intb \left| \nabla u \right|^2 |x \cdot \nn| \; d\sigma(x).
\end{equation}
\begin{itemize}
    \item \underline{\bf Step 1:} Bound on $A_R.$ \\
    
 In this step we prove: for $\varepsilon >0,$ there exists a constant $R_\varepsilon >0$ such that
\begin{equation}
    \label{bdd-A_R}
    \forall t \geq 0 , \; R \geq R_{\varepsilon} ( 1+ |x(t)|) \Longrightarrow \left| A_R(u(t))\right| \leq \varepsilon \delta(t). 
\end{equation}

We distinguish two cases: $\delta$ small or not.
In the first case, we will use the estimate on the modulation parameters in Section \ref{Sec-Modulation}. Consider $\delta_0>0,$ as in the previous Section, such that the modulation parameters, 
$\Theta(t),X(t),\rho(t)$ are well defined for all $t\in D_{\delta_0}.$ Let $\delta_1$ to be specified later such that $0<\delta_1<\delta_0.$ Assume that $t \in D_{\delta_1}.$ Let $g_{_{-X}}=\rho Q_{_{-X}} \Psi+ h,$ then from Proposition \ref{lem-equiv-modu-param} with Lemma \ref{lem-aproox-X-delta} and \eqref{decom-u}, we have 
\begin{equation}
    \label{decom-u+g}
    u(t,x)=e^{i\theta(t)+it } Q(x-X(t))\Psi(x)+ g(t,x-X(t)) e^{i\theta(t)+it }\quad \text{ and } \left\| g \right\|_{H^1_0(\Omega)} \leq C  \delta(t)  .
\end{equation}

For the equation on $\R^3,$ $\mathcal{Y}_R$ is defined by integration on $\R^3$ (not on $\Omega$). The corresponding $A_R$ is also defined with an integration on $\R^3$ instead of $\Omega.$ A crucial point is that when $R$ is large, by the property of $ \varphi,$ all the integrands in the definition \eqref{def-A_R} of $A_R$ are $0$ close to the obstacle, so that it can be integrated over $\R^3$ instead of $\Omega.$ Note that it only works if $R$ is large enough. If $\theta_0$ and $x_0$ are fixed, $e^{i\theta_0 + it }Q(\cdot+x_0)$ is a solution of \nls such that the corresponding $\mathcal{Y}_R(t)$ does not depend on $t$ and also $\delta(t)=0.$ Thus, $A_R(e^{i{\theta_0}+it}Q(\cdot+x_0))=0 $ for all $R$ and $t.$  \\

Using the change of variable $y=x-X(t)$ in \eqref{def-A_R},  we get
\begin{align*}
\left| A_R(u(t)) \right|&= \left| A_R(u(t))-A_R(e^{i\theta(t)+it}Q(x-X(t))   \right| \\& \leq C \int_{|y+X(t)|\geq R }  \bigg(|\nabla Q(y) | |\nabla g (y)|+|\nabla g (y)|^2+ |Q(y)| |g(y)|+ |Q(y)||g(y)|^3 \\ &\qquad \qquad \qquad \qquad + |g(y)|^2+ |g(y)|^4 \bigg) dy   
\\ & \leq C \int_{|y+X(t)|\geq R } \bigg( \frac{e^{-|y|}}{|y|} \left(|\nabla g (y)|+|g(y)|+|g(y)|^3  \right) + |\nabla g (y)|^2+|g(y)|^2+|g(y)|^4 \bigg)  dy. 
\end{align*}
By \eqref{decom-u+g}, we have $\left\| g \right\|_{H^1_0(\Omega)} \leq C \delta(t) , $ which yields  
\begin{align*}
    R \geq R_0+ |X(t)| \Longrightarrow \left| A_R(u(t)) \right|  &\leq C \left[ e^{-R_0}(\delta(t)+\delta(t)^3)+ \delta(t)^2+\delta(t)^4  \right] \\ &\leq C \left[ e^{-R_0}+e^{-R_0}\delta(t)^2 +\delta(t)+\delta(t)^3 \right]\delta(t) \\ 
    &\leq \varepsilon \delta(t),
\end{align*}
provided $R_0>0$ is such that $Ce^{-R_0}\leq \frac{\varepsilon}{2} $ and $\delta_1$ is such that $Ce^{-R_0}\delta_1^2 +\delta_1+\delta_1^3 \leq \frac \varepsilon 2.$ \\ 
Since $0<\delta_1<\delta_0$ and $x(t)=X(t)$ on $D_{\delta_0},$
we obtain \eqref{bdd-A_R} for $\delta(t)<\delta_1.$  \\ 
Now consider the second case, i.e., $\delta(t)\geq \delta_1. $ By \eqref{def-A_R}, we have 
\begin{align*}
    \left| A_R(u(t))\right| \leq C \int_{|x-x(t)|\geq R-|x(t)|} | \nabla u (t)|^2+ |u(t)|^4 + |u(t)|^2 dx .
\end{align*}
By the compactness of $K,$ there exists $R_1>0$ such that 
\begin{equation}
    R \geq |x(t)|+R_1 \text{ and } \delta(t)\geq \delta_1 \Longrightarrow \left| A_R(u(t)) \right| \leq  \varepsilon \delta_1 \leq \varepsilon \delta(t),
\end{equation}
which concludes the proof of \eqref{bdd-A_R} and completes Step 1. \\ 

\item \underline{\bf Step 2:} Conclusion of the proof. \\ 
By \eqref{deriv2-Y} and \eqref{bdd-A_R}, we get that there exists $R_2>0$ such that, 
\begin{align*}
    R \geq R_2(1+|x(t)|) \Longrightarrow \left| \mathcal{Y}^{''}_{R}(t) \right| \geq 2 \delta(t).
\end{align*}
Let $R=R_2(1+\sup_{\sigma \leq t \leq \tau } |x(t)|).$ Then
\begin{align}
\label{delta<y2<y1}
    2 \int_{\sigma}^{\tau} \delta(t) dt \leq \int_{\sigma}^{\tau} \mathcal{Y}^{''}_R(t) \,  dt \leq \mathcal{Y}^{'}_R(\tau)-\mathcal{Y}^{'}_R(\sigma). 
\end{align}
If $\delta(t)<\delta_0,$ then by Step 1, changing the variable $y=x-X(t)$ and since $\Psi(x)=1$ for large $|x|,$ we obtain 
\begin{align*}
    \mathcal{Y}^{'}_R(t)&=2 R \im \int \bar{g}(y)\; \nabla \varphi\left(\frac{y+X(t)}{R}\right) \cdot \nabla \left(Q(y) \Psi(y+X(t)\right)\\ & + 2 R \im \int Q(y) \Psi(y+X(t)) \nabla\varphi \left( \frac{y+X(t)}{R} \right) \cdot \nabla g(y) \, dy \\ 
   & + 2R \im \int \bar{g}(y) \nabla\varphi \left( \frac{y+X(t)}{R} \right) \cdot  \nabla g(y) dy ,
\end{align*}
which yields 
$$  \left| \mathcal{Y}^{'}_R(t) \right| \leq CR(\delta(t)+\delta(t)^2) \leq CR \delta(t). $$
This inequality is also valid for $\delta(t) \geq \delta_0,$ by straightforward estimates. Using \eqref{delta<y2<y1}, we obtain 
\begin{align*}
\int_{\sigma}^{\tau} \delta(t)dt &\leq C\, R (\delta (\sigma)+ \delta(\tau)) \\ &\leq C \, R_2 \left(1+\sup_{\sigma \leq t \leq \tau } |x(\tau)| \right) (\delta (\sigma)+ \delta(\tau)).
\end{align*}
This concludes the proof of Lemma \ref{intdelta}.
\end{itemize}

\end{proof}
\begin{lem}
\label{x(tau)-x(sigma)}
There exists a constant $C>0$ such that 
\begin{equation}
\label{eq-x(tau)-x(sigma)}
\forall \sigma,\tau>0\quad \text{with} \quad \sigma+1 \leq \tau, \quad |x(\tau)-x(\sigma)| \leq C \int_{\sigma}^{\tau} \delta(t) dt  .
\end{equation}
\end{lem}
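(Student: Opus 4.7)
My plan is to split the interval $[\sigma, \tau]$ according to whether $t$ lies in the modulation regime $D_{\delta_0} = \{\delta < \delta_0\}$ or in its complement $B := \{t \in [\sigma,\tau] : \delta(t) \geq \delta_0\}$. On $D_{\delta_0}$ we have $x(t) = X(t)$ by Corollary~\ref{x=X}, and combining Lemma~\ref{derv-of-mod-par} (which gives $|X'(t)| = O(\delta + \e)$) with Lemma~\ref{lem-aproox-X-delta} (which absorbs the exponential tail into $\delta$) yields the pointwise bound $|x'(t)| \leq C\delta(t)$ on $D_{\delta_0}$. Integrating this on any subinterval contained in $D_{\delta_0}$ and extending by continuity to closed endpoints produces the right-hand side of the lemma, so the real issue is handling the contribution of $B$, which is closed in $[\sigma,\tau]$ and may be non-empty.

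The companion bounds come from the compactness of the set $K$ from Proposition~\ref{compac-prop}. First, applying Lemma~\ref{suiteDelta-->0} in contrapositive form, and separately handling any bounded subsequence via continuity of $x$ on compact time intervals, shows that there exists $M_1 > 0$ such that $\delta(t) \geq \delta_0$ implies $|x(t)| \leq M_1$. Second, and more delicately, I will prove that $\delta$ is uniformly continuous on $[0,\infty)$: if not, extract sequences $s_n, t_n$ with $|s_n - t_n| \to 0$ but $|\delta(s_n) - \delta(t_n)| > \delta_0/2$; precompactness of $K$ gives $\uu(t_n, \cdot + x(t_n)) \to v_1$ and $\uu(s_n, \cdot + x(s_n)) \to v_2$ in $H^1(\R^3)$, while the \NNls equation yields $\|\uu(t_n) - \uu(s_n)\|_{L^2} \to 0$. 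The sequence $x(t_n) - x(s_n)$ must remain bounded, for otherwise $v_2(\cdot + x(t_n) - x(s_n)) \rightharpoonup 0$ weakly in $L^2$, contradicting $\|v_1\|_{L^2}^2 = M[Q] > 0$; extracting $x(t_n) - x(s_n) \to a$ gives $v_2 = v_1(\cdot - a)$, and translation-invariance of $\|\nabla \cdot\|_{L^2}$ forces $\lim \delta(t_n) = \lim \delta(s_n)$, the desired contradiction. As a consequence, if $B \neq \emptyset$ then for any $t^* \in B$ the slab $[t^*-\eta_1, t^* + \eta_1] \cap [\sigma,\tau]$ has length at least $\eta_1$ (using $\tau - \sigma \geq 1$ and WLOG $\eta_1 \leq 1$) and $\delta \geq \delta_0/2$ there, so $\int_\sigma^\tau \delta \geq c_1 := \delta_0 \eta_1 / 2 > 0$.

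To conclude, if $B = \emptyset$ the bound $|x(\tau) - x(\sigma)| \leq C \int_\sigma^\tau \delta$ follows immediately from the derivative bound in the first paragraph. Otherwise, set $t^* := \min B$ and $t^{**} := \max B$ (both in $B$ since $B$ is closed), so $|x(t^*)|, |x(t^{**})| \leq M_1$ by the first companion bound. On the half-open sets $[\sigma, t^*)$ and $(t^{**}, \tau]$, where $\delta < \delta_0$, the derivative bound together with continuity of $x$ gives $|x(t^*) - x(\sigma)| + |x(\tau) - x(t^{**})| \leq C \int_\sigma^\tau \delta$, while the middle transition satisfies $|x(t^{**}) - x(t^*)| \leq 2 M_1 \leq (2 M_1/c_1) \int_\sigma^\tau \delta$. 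Adding these via the triangle inequality yields the claim with constant $C' = C + 2 M_1/c_1$. The principal obstacle is the uniform continuity of $\delta$ on $[0,\infty)$: precompactness of $K$ only supplies cluster points of $\uu(t, \cdot + x(t))$, and one has to promote this to a uniform modulus using the $L^2$-continuity from the evolution equation and a careful translation-invariance argument on the limiting profiles $v_1, v_2$.
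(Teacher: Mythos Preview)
Your argument is correct and takes a genuinely different route from the paper's. Both proofs share the same pointwise input on the modulation set, namely $x=X$ on $D_{\delta_0}$ (Corollary~\ref{x=X}) together with $|X'(t)|\leq C\delta(t)$ from Lemmas~\ref{derv-of-mod-par} and~\ref{lem-aproox-X-delta}, so the only issue in either approach is the contribution of the ``large $\delta$'' set. The paper does \emph{not} treat $[\sigma,\tau]$ globally: it first proves a dichotomy on windows of length~$2$ (either $\inf\delta\geq\delta_1$ or $\sup\delta<\delta_0$ on $[\tau,\tau+2]$), obtained by a continuity-of-flow argument for \Nls after using Lemma~\ref{suiteDelta-->0} to force $x(t_n')$ bounded, and then concludes for $\sigma+1\leq\tau<\sigma+2$ in each branch before chopping a general $[\sigma,\tau]$ into pieces of length between~$1$ and~$2$. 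You instead avoid any subdivision by proving two standalone facts: (i) $\delta$ is uniformly continuous on $[0,\infty)$, via $\|u(t)-u(s)\|_{L^2}\leq C|t-s|^{1/2}$ (interpolating the $H^{-1}$-Lipschitz bound from the equation against the uniform $H^1$ bound) combined with compactness of $K$ and a translation argument on the limiting profiles; and (ii) $\delta(t)\geq\delta_0\Rightarrow|x(t)|\leq M_1$ from Lemma~\ref{suiteDelta-->0}. These yield a uniform lower bound $\int_\sigma^\tau\delta\geq c_1$ whenever $B\neq\emptyset$, which lets you absorb the middle jump $|x(t^{**})-x(t^*)|\leq 2M_1$ in one stroke. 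Your packaging is arguably cleaner (no partition of the interval, and the uniform continuity of $\delta$ is of independent interest), while the paper's dichotomy is closer in spirit to the $\R^3$ argument of \cite{DuRo10} and makes the role of the \Nls flow more explicit.
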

\begin{proof}
Let $\delta_0>0$ be as in Section \ref{Sec-Modulation}. Let us first show that there exists $\delta_1>0$ such that, 
\begin{equation}
\label{inf-supp}
    \forall \tau \geq 0 \quad \inf_{t\in[\tau,\tau+2]} \delta(t) \geq \delta_1 \quad \text{ or } \quad \sup_{t\in[\tau,\tau+2]} \delta(t) < \delta_0.
\end{equation}
If not, there exist $t_n,t_n'\geq 0$ such that 
\begin{equation}
\label{contradInf+sup}
    \delta(t_n) \xrightarrow[n \rightarrow +\infty]{} 0 ,\quad  \delta(t_n')\geq \delta_0, \quad |t_n-t_n'|\leq 2  ,
\end{equation}
extracting a subsequence if necessary, we may assume 
\begin{equation}
\label{def-tau}
    \lim_{n \rightarrow +\infty} t_n-t_n'=\tau \in [-2,2].
\end{equation}
Note that if $t'_n$ goes to $+ \infty,$ then $\left| x(t_n')\right| $ converges (after extraction) to a limit $X_0\in \R^3.$ If not $ \left| x(t_n') \right|  \longrightarrow + \infty $ and by Lemma \ref{suiteDelta-->0}, $\delta (t'_n) \longrightarrow 0,$ which contradicts \eqref{contradInf+sup}. \\
By the compactness of $K,$ we have  \begin{equation*}
\uu(t_n', \cdot + x(t_n'))  \xrightarrow[n \longrightarrow + \infty ]{} w_0  \in H^1(\R^3).
\end{equation*}
Denote $v_0(x)=w_0(x-X_0).$ We have 
\begin{equation}
\uu(t_n', \cdot+x(t_n^{'})) \xrightarrow[n \longrightarrow + \infty ]{}    v_0(\cdot+X_0)     \in H^1(\R^3).
\end{equation}
Thus,
$$\uu(t_n') \xrightarrow[n \longrightarrow + \infty ]{}  v_0 \in  H^1(\R^3). $$
In particular, $v_0=0$ on $\Omega^c$ and we obtain, 
\begin{equation} 
\label{conpac-conv}
u(t_n')   \xrightarrow[n \longrightarrow + \infty ]{}   v_0    \in    H^1_0(\Omega).
\end{equation}
Since $\delta(t_n')= \int \left| \nabla Q \right|^2 - \int \left|\nabla \uu(t_n',\cdot+x(t_n'))\right|^2 \geq \delta_0 >0,$ we have
\begin{equation*}
    \left\|  \nabla v_0 \right\|_{L^2(\Omega)}  < \left\| \nabla Q \right\|_{L^2(\R^3)} .
\end{equation*}
Let $v(t)$ be a solution of \Nls with initial data $v_0$ at $t=0$ and maximal time of existence~$I.$ Then by continuity of the flow of the \NNls equation, we have for all $t \in I,$
\begin{equation}
\label{v(t)-conitui-flow}
    \left\| \nabla  v(t) \right\|_{L^2(\Omega)}  < \left\| \nabla Q \right\|_{L^2(\R^3)} .
\end{equation}
As a consequence, $I=\R$ and by continuity of the flow of the \NNls equation, \eqref{def-tau} and \eqref{conpac-conv}, we have  
    $$u(t_n) \xrightarrow[n \longrightarrow + \infty]{} v(\tau) \in H^1_0(\Omega).$$ 
 Since $\delta(t_n) \rightarrow 0, \;  \left\| \nabla v(\tau) \right\|_{L^2(\Omega)} = \left\| \nabla Q  \right\|_{L^2(\R^3)}, $ which contradicts \eqref{v(t)-conitui-flow}. \\

Now, we prove \eqref{eq-x(tau)-x(sigma)} with an additional condition that $\tau <\sigma+2.$ By \eqref{inf-supp}, 
we may assume that  $$  \inf_{t\in[\sigma,\tau]} \delta(t) \geq \delta_1 \quad \text{ or } \quad \sup_{t\in[\sigma,\tau]} \delta(t) < \delta_0.$$
In the first case, we have $\int_{\sigma}^{\tau} \delta(t) \geq \delta_1$ and by a straightforward consequence of the compactness of $K$ and the continuity of the flow of \Nls equation, we have 

$$ \exists C>0,  \; \forall t,s \geq 0,\quad |t-s|\leq 2 \Longrightarrow  \left| X(t)-X(s) \right| \leq \frac{C}{\delta_1} \int_{\sigma}^\tau \delta(t) dt. $$
In the second case, by Corollary \ref{x=X} we have, $\forall t \in D_{\delta_0}, \; x(t)=X(t),$ and from Lemmas \ref{derv-of-mod-par} and \ref{lem-aproox-X-delta}, we have \begin{equation}
\label{X'+delta}
\left| X'(t) \right|  \leq C \delta(t) .
\end{equation}
 Thus, \eqref{eq-x(tau)-x(sigma)} follows from the time integration of \eqref{X'+delta} for $\tau < \sigma+2.$\\
 
 To conclude the proof of Lemma \ref{x(tau)-x(sigma)}, we divide $[\sigma,\tau]$ into intervals of length at least $1$ and at most $2$ and combine together the previous inequalities to get \eqref{eq-x(tau)-x(sigma)}.
\end{proof}
\begin{proof}[Proof of the Proposition \ref{bddX}]
We argue by contradiction. Assume that there exists $\tau_n \longrightarrow + \infty $ such that $ |x(\tau_n)| \longrightarrow + \infty$ and $|x(\tau_n)|=\sup_{ t \in [0,\tau_n]} |x(t)|. $ By Lemma \ref{suiteDelta-->0},  $\delta (\tau_n) \xrightarrow[ n \rightarrow + \infty ]{} 0.$

 Let $N_0$ be such that  $C\delta(\tau_n)\leq \frac{1}{100}$ for all $n\geq N_0.$ By Lemmas \ref{intdelta} and \ref{x(tau)-x(sigma)} we have \begin{align*}
 |x(\tau_n)-x(\tau_{N_0})| &\leq C \int_{\tau_{N_0}}^{\tau_n} \delta (t) dt \\ 
 & \leq C ( 1+|x(\tau_n)|) ( \delta(\tau_{N_0}) + \delta(\tau_{n})) ,
 \end{align*}
 hence, $$  |x(\tau_n)|  \leq C |x(\tau_{N_0})| ,$$
 which gives a contradiction. This concludes the proof of Proposition \ref{bddX}.
\end{proof}
\subsection{Convergence in mean}
\label{Sec-convergence-mean}
\begin{lem}
\label{lem-convinmean}
Consider a solution $u(t)$  of \Nls satisfying assumptions of  Proposition \ref{bddX}. Then 
\begin{equation}
    \label{cvinmean}
        \lim_{T \rightarrow + \infty} \frac 1T \int_{0}^T \delta(t) dt=0.
    \end{equation}
\end{lem}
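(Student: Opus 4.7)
The plan is to combine the two previous ingredients from this subsection, namely the translation bound of Proposition \ref{bddX} and the localized-virial integral bound of Lemma \ref{intdelta}, and to observe that together they already force a uniform-in-$T$ bound on $\int_0^T \delta(t)\,dt$. The convergence in mean is then immediate by division by $T$.

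First, by Proposition \ref{bddX}, there exists a constant $M>0$ such that $|x(t)|\le M$ for all $t\ge 0$. Next, I apply Lemma \ref{intdelta} with $\sigma=0$ and $\tau=T$. This yields
\begin{equation*}
\int_0^T \delta(t)\,dt \;\le\; C\,\bigl[\,1+\sup_{t\in[0,T]}|x(t)|\,\bigr]\bigl(\delta(0)+\delta(T)\bigr)\;\le\; C(1+M)\bigl(\delta(0)+\delta(T)\bigr).
\end{equation*}

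The remaining point is to check that the right-hand side is bounded uniformly in $T$. The initial value $\delta(0)$ is fixed, and $\delta(t)$ is globally bounded: indeed, the assumption $\|\nabla u_0\|_{L^2(\Omega)}<\|\nabla Q\|_{L^2(\mathbb R^3)}$ combined with $E[u]M[u]=E_{\mathbb R^3}[Q]M_{\mathbb R^3}[Q]$ and the sharp Gagliardo--Nirenberg inequality \eqref{Qgaglia} shows that $\|\nabla u(t)\|_{L^2(\Omega)}\le \|\nabla Q\|_{L^2(\mathbb R^3)}$ for every $t$ in the maximal interval of existence (standard variational trapping argument used already in Theorem \ref{theo-killip-all}). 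Hence
\begin{equation*}
\delta(t)=\|\nabla Q\|_{L^2(\mathbb R^3)}^{2}-\|\nabla u(t)\|_{L^2(\Omega)}^{2}\le \|\nabla Q\|_{L^2(\mathbb R^3)}^{2},\qquad \forall\,t\ge 0.
\end{equation*}

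Therefore $\int_0^T \delta(t)\,dt \le 2\,C(1+M)\,\|\nabla Q\|_{L^2(\mathbb R^3)}^{2}=:\mathcal C$, with $\mathcal C$ independent of $T$, and dividing by $T$ gives
\begin{equation*}
\frac1T\int_0^T\delta(t)\,dt\;\le\;\frac{\mathcal C}{T}\xrightarrow[T\to+\infty]{}0,
\end{equation*}
which is \eqref{cvinmean}. There is no genuine obstacle here: all the work has already been done in Proposition \ref{bddX} (whose proof was the delicate part, relying on the comparison between $X(t)$ and the auxiliary $\mathbb R^3$-modulation parameter $X_1(t)$) and in the virial-based Lemma \ref{intdelta}. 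The only mild point to verify is the preserved subthreshold bound $\|\nabla u(t)\|_{L^2}\le\|\nabla Q\|_{L^2}$, which follows from the standard coercivity/continuity argument on the mass-energy threshold.
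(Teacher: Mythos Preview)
Your argument is correct and in fact yields the stronger conclusion $\delta\in L^1([0,\infty))$, from which \eqref{cvinmean} follows trivially. The paper proceeds differently: it reopens the localized-virial computation, recalling from the proof of Lemma~\ref{intdelta} the identity $\mathcal{Y}_R''(t)=4\delta(t)+A_R(u(t))+2\int_{\partial\Omega}|\nabla u|^2|x\cdot\nn|\,d\sigma$, and then for each $\varepsilon>0$ chooses $R=\varepsilon T+R_0(\varepsilon)+1$, uses compactness of $K$ together with the boundedness of $x(t)$ to estimate $|A_R|\le\varepsilon$, obtains $\int_{t_0(\varepsilon)}^T\delta(t)\,dt\le C(R_0(\varepsilon)+\varepsilon T+1)$, divides by $T$, and finally lets $\varepsilon\to 0$. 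Your route is more economical: you simply feed the conclusion of Proposition~\ref{bddX} (namely $\sup_{t\ge 0}|x(t)|\le M$) back into Lemma~\ref{intdelta}, whose virial work has already been done, and combine with the trivial bound $\delta(t)\le\|\nabla Q\|_{L^2}^2$. Both proofs rest on the same two inputs (the virial estimate and the boundedness of $x(t)$); you just observe that Lemma~\ref{intdelta} has already packaged the virial step in a form strong enough to conclude directly, while the paper repeats part of that computation with a $T$-dependent radius. The paper's approach would still go through under the weaker hypothesis $|x(t)|=o(t)$, but since Proposition~\ref{bddX} gives genuine boundedness, your shortcut is entirely legitimate here.
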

\begin{corll}
Under the assumptions of Proposition \ref{bddX}, there exists a sequence of times $t_n$  such that $t_n \rightarrow + \infty$ and 
$$ \lim_{n \longrightarrow + \infty} \delta(t_n)=0.$$
\end{corll}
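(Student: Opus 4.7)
The plan is to extract the desired sequence directly from the Cesàro-type convergence established in Lemma \ref{lem-convinmean}, which says that $\frac{1}{T}\int_0^T \delta(t)\,dt \to 0$ as $T \to +\infty$. This is a general fact for non-negative continuous functions: vanishing of the Cesàro mean forces the existence of a subsequence along which the function itself vanishes. Note that continuity of $\delta$ follows from the continuity of the \NNls flow in $H^1_0(\Omega)$, so that $t \mapsto \left\| \nabla u(t) \right\|_{L^2(\Omega)}^2$ is continuous.

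Concretely, for each integer $n \geq 1$, since $\delta$ is continuous and non-negative on the interval $[n, 2n]$, its minimum on that interval is at most the average, so there exists $t_n \in [n, 2n]$ with
$$
0 \leq \delta(t_n) \leq \frac{1}{n}\int_n^{2n} \delta(t)\,dt = \frac{2}{2n}\int_0^{2n} \delta(t)\,dt \;-\; \frac{1}{n}\int_0^n \delta(t)\,dt.
$$
By Lemma \ref{lem-convinmean}, both terms on the right-hand side tend to $0$ as $n \to +\infty$, hence $\delta(t_n) \to 0$. Since $t_n \geq n$, we also have $t_n \to +\infty$, which gives the claim.

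Equivalently, one could argue by contradiction: were no such sequence to exist, then $\liminf_{t \to +\infty} \delta(t)$ would be a strictly positive number $c$, so $\delta(t) \geq c/2$ for all sufficiently large $t$, forcing $\frac{1}{T}\int_0^T \delta(t)\,dt \geq c/2 + o(1)$, in contradiction with Lemma \ref{lem-convinmean}. There is no genuine obstacle in this corollary; all the work sits upstream in Lemma \ref{lem-convinmean}, and the passage from convergence in mean to convergence along a subsequence is a one-line averaging argument.
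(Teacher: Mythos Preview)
Your proposal is correct and is exactly what the paper intends: the corollary is stated without proof, as an immediate consequence of Lemma~\ref{lem-convinmean}, and your averaging/contradiction argument is precisely the standard one-line extraction the authors have in mind. There is nothing to add.
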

\begin{proof}[Proof of Lemma \ref{lem-convinmean}]

Consider the localized variance defined in \eqref{def-Y-R} and recall that from the proof of Lemma \ref{intdelta}, we have 
 \begin{equation}
\label{derv2-y}
 \mathcal{Y}''_{R}(t)= 4 \delta(t)+ A_R(u(t))+ 2  \intb \left| \nabla u \right|^2 |x \cdot \nn| d\sigma(x),
\end{equation}
where $\nn$ is outward normal vector and $A_R$ is defined in \eqref{def-A_R}. \\

If $|y|\leq 1, (\Delta^2 \varphi)(y)=0, \partial_{x_j}^2 \varphi(y)=2$ and $\Delta \varphi(y)=6.$ Thus, 
\begin{equation}
\label{A_Rforx>R}
  \left| A_R(u(t)) \right| \leq C \int_{|x|\geq R} \left| \nabla u \right|^2+ |u|^4+ \frac{1}{R^2} |u|^2.
\end{equation}
Let $x(t)$ be as in Corollary \ref{x=X} and $K$ be defined by \eqref{def-K}. Let $\varepsilon >0.$ By the compactness of $K$ and Proposition \ref{bddX}, there exists $R_0(\varepsilon)>0$ such that 
\begin{equation}
    \forall t \geq 0, \quad \int_{|x-X(t)| \geq R_0(\varepsilon)} |\nabla u|^2+ |u|^2+|u|^4 \leq \varepsilon.
\end{equation}
Furthermore, $x(t)$ is bounded, and thus, $\frac{x(t)}{t} \xrightarrow[ t \rightarrow + \infty]{} 0. $ There exists $t_0(\varepsilon)$ such that 
$$\forall t \geq t_0(\varepsilon), \; |x(t)| \leq \varepsilon t .$$
Let $$T\geq t_0(\varepsilon), \;  R=\varepsilon T+ R_0(\varepsilon)+1 \quad  \text{for }  \,  t \in [t_0(\varepsilon),T].$$
Next, we use the fact that $|x(t)|\leq \varepsilon T $ and $R_0(\varepsilon)+\varepsilon T \leq R, $ to get 
\begin{multline}
\label{|x|<R}
    \int_{|x|\geq R} |\nabla u |^2 + |u|^4+\frac{1}{R^2} |u|^2 \leq \int_{|x-x(t)|+|x(t)| \geq R}  |\nabla u |^2 + |u|^4+\frac{1}{R^2} |u|^2
    \\ \leq \int_{|x-x(t)| \geq R_0(\varepsilon)}  |\nabla u |^2 + |u|^4+\frac{1}{R^2} |u|^2 \leq \varepsilon.
\end{multline}
By \eqref{derv1-Y-R}, we have
$$  \int_{t_0(\varepsilon)}^T \mathcal{Y}^{''}_{R}(t) dt \leq \left| \mathcal{Y}^{'}_{R}(T) \right|+\left| \mathcal{Y}^{'}_{R}(t_0(\varepsilon)) \right| \leq C\, R .$$

From \eqref{derv2-y}, \eqref{A_Rforx>R} and \eqref{|x|<R} we have
\begin{equation*}
    \int_{t_0(\varepsilon)}^T \delta(t) dt \leq C(R+T\varepsilon) \leq C  R_0(\varepsilon)+\varepsilon T+1,
\end{equation*}
where $C>0,$ independent of $T$ and $\varepsilon.$ \\
This yields
\begin{equation*}
    \frac 1T \int_{0}^T \delta(t) dt \leq \frac 1T \int_{0}^{t_0(\varepsilon)} \delta(t) \, dt + \frac CT (R_0(\varepsilon)+1)+ C\varepsilon.
\end{equation*}
Taking first limsup as $T \rightarrow + \infty,$ and letting $\varepsilon$ tend to $0,$ we obtain \eqref{cvinmean}.
\end{proof}

\begin{prop}
\label{porp-u=0}
Let $u$ be a solution of \Nls such that
\begin{equation}
M[u]=M_{\R^3}[Q], \; E[u]=E_{\R^3}[Q], \; \left\| \nabla u_0 \right\|_{L^2(\Omega)} <  \left\| \nabla Q \right\|_{L^2(\R^3)}
\end{equation}
and $K=\{ u(t); t \geq 0 \}$ has a compact closure in $H^1_0(\Omega).$ Then $u \equiv0.$
\end{prop}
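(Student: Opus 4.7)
The plan is to use Lemma \ref{lem-convinmean} to extract a time sequence along which $\delta$ vanishes, apply compactness of the orbit to pass to a strong limit in $H^1_0(\Omega)$, and then exploit the variational rigidity of the ground state to reach a contradiction with the Dirichlet boundary condition.

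First, I will invoke Lemma \ref{lem-convinmean}: since $\frac{1}{T}\int_0^T \delta(t)\,dt \to 0$, there is a sequence $t_n \to +\infty$ with $\delta(t_n) \to 0$. The hypothesis that $\overline{K}$ is compact in $H^1_0(\Omega)$ allows me to pass to a subsequence so that $u(t_n) \to v_\infty$ strongly in $H^1_0(\Omega)$. By the conservation laws and continuity of the relevant norms, $v_\infty$ inherits
\[
M[v_\infty] = M_{\R^3}[Q], \qquad E[v_\infty] = E_{\R^3}[Q],
\]
while the fact $\delta(t_n)\to 0$ forces $\|\nabla v_\infty\|_{L^2(\Omega)} = \|\nabla Q\|_{L^2(\R^3)}$.

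Next, I will pass to the extension $\uv_\infty \in H^1(\R^3)$ defined by zero on $\Omega^c$, which shares all three norm identities with $Q$. The three identities, combined via the expression of the energy, immediately give $\|\uv_\infty\|_{L^4(\R^3)}^4 = \|Q\|_{L^4(\R^3)}^4$, so $\uv_\infty$ saturates the Gagliardo-Nirenberg inequality \eqref{Qgaglia}. The classification of optimizers recalled just after \eqref{Qgaglia} then produces $\lambda_0 \in \C$, $\mu_0 > 0$ and $x_0 \in \R^3$ with $\uv_\infty(x) = \lambda_0 Q(\mu_0(x+x_0))$; matching the mass and gradient norms with those of $Q$ pins down $|\lambda_0| = \mu_0 = 1$, so $\uv_\infty = e^{i\theta_0}Q(\cdot+x_0)$ for some phase $\theta_0 \in \R$.

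The last step is the contradiction: $\uv_\infty$ vanishes on the nonempty open set $\Omega^c$ by construction, whereas any translate of the ground state $Q$ is strictly positive on all of $\R^3$. This incompatibility rules out the existence of any $u$ satisfying the hypotheses, in particular any nontrivial one, so $u \equiv 0$ (vacuously, since $u\equiv 0$ would itself contradict $M[u] = M_{\R^3}[Q] > 0$). There is no real obstacle at this stage — all the heavy lifting (compactness up to translation, boundedness of $x(t)$, convergence of $\delta$ in mean) has already been performed in Sections \ref{Compactness properties}–\ref{Sec-convergence-mean}, and only the variational rigidity via Proposition \ref{uConvQ} remains, which is immediate once the limit $v_\infty$ is in hand.
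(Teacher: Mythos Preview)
Your proposal is correct and follows essentially the same route as the paper's proof: extract a sequence $t_n\to\infty$ with $\delta(t_n)\to 0$ via Lemma~\ref{lem-convinmean}, pass to a strong $H^1_0(\Omega)$ limit by compactness of $\overline{K}$, identify the limit as $e^{i\theta_0}Q(\cdot-x_0)$ by variational rigidity, and contradict the Dirichlet boundary condition. The only difference is cosmetic --- you spell out the Gagliardo--Nirenberg saturation step in more detail than the paper does, and your closing remark about vacuity is a correct (if slightly roundabout) way of phrasing the same contradiction argument.
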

\begin{proof}
If not, there exists a solution $u\neq0$ such that the assumptions of this Proposition are satisfied. From Lemma \ref{lem-convinmean},
 there exists $t_n$ such that $t_n \longrightarrow +\infty$ and $\delta(t_n)$ tends to $0.$ By the compactness of the closure of $K, \;  u(t_n) $ converges in $H^1_0(\Omega)$ to some $v_0 \in H^1_0(\Omega)$ and the fact that $\delta(t_n) $ tends to $0$ implies that $E[v_0]=E_{\R^3}[Q], M[v_0]=M_{\R^3}[Q]$ and $ \left\| \nabla v_0 \right\|_{L^2(\Omega)}= \left\| \nabla Q \right\|_{L^2(\R^3).}$ Thus, $v_0=e^{i\theta_0}Q(x-x_0) \notin H^1_0(\Omega),$ for some parameters $\theta_0 \in \R$ and $x_0 \in \R^3,$ which contradicts the fact that $v_0 \in H^1_0(\Omega).$ 
\end{proof}

\bibliographystyle{acm}
\bibliography{references.bib}

\begin{thebibliography}{10}

\bibitem{AnRa08}
{\sc Anton, R.}
\newblock Global existence for defocusing cubic {NLS} and {G}ross-{P}itaevskii
  equations in three dimensional exterior domains.
\newblock {\em J. Math. Pures Appl. (9) 89}, 4 (2008), 335--354.

\bibitem{BeLi83a}
{\sc Berestycki, H., and Lions, P.-L.}
\newblock Nonlinear scalar field equations. {I}. {E}xistence of a ground state.
\newblock {\em Arch. Rational Mech. Anal. 82}, 4 (1983), 313--345.

\bibitem{BlairSmithSogge2012}
{\sc Blair, M.~D., Smith, H.~F., and Sogge, C.~D.}
\newblock Strichartz estimates and the nonlinear {S}chr\"{o}dinger equation on
  manifolds with boundary.
\newblock {\em Math. Ann. 354}, 4 (2012), 1397--1430.

\bibitem{BuGeTz04a}
{\sc Burq, N., G{\'e}rard, P., and Tzvetkov, N.}
\newblock On nonlinear {S}chr\"odinger equations in exterior domains.
\newblock {\em Ann. Inst. H. Poincar\'e Anal. Non Lin\'eaire 21}, 3 (2004),
  295--318.

\bibitem{Cazenave03BO}
{\sc Cazenave, T.}
\newblock {\em Semilinear {S}chr\"odinger equations}, vol.~10 of {\em Courant
  Lecture Notes in Mathematics}.
\newblock New York University Courant Institute of Mathematical Sciences, New
  York, 2003.

\bibitem{CooperStrauss76}
{\sc Cooper, J., and Strauss, W.~A.}
\newblock Energy boundedness and decay of waves reflecting off a moving
  obstacle.
\newblock {\em Indiana Univ. Math. J. 25}, 7 (1976), 671--690.

\bibitem{DuHoRo08}
{\sc Duyckaerts, T., Holmer, J., and Roudenko, S.}
\newblock Scattering for the non-radial 3{D} cubic nonlinear {S}chr\"odinger
  equation.
\newblock {\em Math. Res. Lett. 15}, 6 (2008), 1233--1250.

\bibitem{OuKaiSvetTh20}
{\sc Duyckaerts, T., Landoulsi, O., Roudenko, S., and Yang, K.}
\newblock Interaction between solitons and obstacle in the nonlinear
  {S}chr\"odinger equation.
\newblock {\em preprint\/} (2020).

\bibitem{DuMe09a}
{\sc Duyckaerts, T., and Merle, F.}
\newblock Dynamic of threshold solutions for energy-critical {NLS}.
\newblock {\em Geom. Funct. Anal. 18}, 6 (2009), 1787--1840.

\bibitem{DuRo10}
{\sc Duyckaerts, T., and Roudenko, S.}
\newblock Threshold solutions for the focusing 3d cubic {S}chr\"odinger
  equation.
\newblock {\em Rev. Mat. Iberoam. 26}, 1 (2010), 1--56.

\bibitem{DuRo15}
{\sc Duyckaerts, T., and Roudenko, S.}
\newblock Going beyond the threshold: scattering and blow-up in the focusing
  {NLS} equation.
\newblock {\em Comm. Math. Phys. 334}, 3 (2015), 1573--1615.

\bibitem{FaXiCa11}
{\sc Fang, D., Xie, J., and Cazenave, T.}
\newblock Scattering for the focusing energy-subcritical nonlinear
  {S}chr\"odinger equation.
\newblock {\em Sci. China Math. 54}, 10 (2011), 2037--2062.

\bibitem{Gerard98}
{\sc G\'{e}rard, P.}
\newblock Description du d\'{e}faut de compacit\'{e} de l'injection de
  {S}obolev.
\newblock {\em ESAIM Control Optim. Calc. Var. 3\/} (1998), 213--233.

\bibitem{GiNiNi81}
{\sc Gidas, B., Ni, W.~M., and Nirenberg, L.}
\newblock Symmetry of positive solutions of nonlinear elliptic equations in
  {${\bf R}^{n}$}.
\newblock In {\em Mathematical analysis and applications, {P}art {A}}, vol.~7
  of {\em Adv. in Math. Suppl. Stud.} Academic Press, New York, 1981,
  pp.~369--402.

\bibitem{Guevara14}
{\sc Guevara, C.~D.}
\newblock Global behavior of finite energy solutions to the {$d$}-dimensional
  focusing nonlinear {S}chr\"{o}dinger equation.
\newblock {\em Appl. Math. Res. Express. AMRX}, 2 (2014), 177--243.

\bibitem{HoRo08}
{\sc Holmer, J., and Roudenko, S.}
\newblock A sharp condition for scattering of the radial 3{D} cubic nonlinear
  {S}chr\"odinger equation.
\newblock {\em Comm. Math. Phys. 282}, 2 (2008), 435--467.

\bibitem{Ivanovici07}
{\sc Ivanovici, O.}
\newblock Precised smoothing effect in the exterior of balls.
\newblock {\em Asymptot. Anal. 53}, 4 (2007), 189--208.

\bibitem{Ivanovici10}
{\sc Ivanovici, O.}
\newblock On the {S}chr{\"o}dinger equation outside strictly convex obstacles.
\newblock {\em Analysis \& PDE 3}, 3 (2010), 261--293.

\bibitem{MR2683754}
{\sc Ivanovici, O., and Planchon, F.}
\newblock On the energy critical {S}chr\"odinger equation in {$3D$}
  non-trapping domains.
\newblock {\em Ann. Inst. H. Poincar\'e Anal. Non Lin\'eaire 27}, 5 (2010),
  1153--1177.

\bibitem{Ivri69}
{\sc Ivri\u{\i}, V.~J.}
\newblock Exponential decay of the solution of the wave equation outside an
  almost star-shaped region.
\newblock {\em Dokl. Akad. Nauk SSSR 189\/} (1969), 938--940.

\bibitem{KeMe06}
{\sc Kenig, C.~E., and Merle, F.}
\newblock Global well-posedness, scattering and blow-up for the
  energy-critical, focusing, non-linear {S}chr\"odinger equation in the radial
  case.
\newblock {\em Invent. Math. 166}, 3 (2006), 645--675.

\bibitem{Keraani01}
{\sc Keraani, S.}
\newblock On the defect of compactness for the {S}trichartz estimates of the
  {S}chr\"odinger equations.
\newblock {\em J. Differential Equations 175}, 2 (2001), 353--392.

\bibitem{killip2015riesz}
{\sc Killip, R., Visan, M., and Zhang, X.}
\newblock Riesz transforms outside a convex obstacle.
\newblock {\em International Mathematics Research Notices 2016}, 19 (2015),
  5875--5921.

\bibitem{KiVisnaZhang16}
{\sc Killip, R., Visan, M., and Zhang, X.}
\newblock The focusing cubic {NLS} on exterior domains in three dimensions.
\newblock {\em Appl. Math. Res. Express. AMRX}, 1 (2016), 146--180.

\bibitem{KiVisZha16}
{\sc Killip, R., Visan, M., and Zhang, X.}
\newblock Quintic {NLS} in the exterior of a strictly convex obstacle.
\newblock {\em Amer. J. Math. 138}, 5 (2016), 1193--1346.

\bibitem{Kwong89}
{\sc Kwong, M.~K.}
\newblock Uniqueness of positive solutions of {$\Delta u-u+u^p=0$} in {${\bf
  R}^n$}.
\newblock {\em Arch. Rational Mech. Anal. 105}, 3 (1989), 243--266.

\bibitem{Ouss20}
{\sc Landoulsi, O.}
\newblock Blow-up solutions of the nonlinear {S}chr\"odinger equation on the
  exterior of the unit ball.
\newblock {\em preprint\/} (2020).

\bibitem{Ou19}
{\sc Landoulsi, O.}
\newblock Construction of a solitary wave solution of the nonlinear focusing
  {S}chr\"{o}dinger equation outside a strictly convex obstacle in the
  {$L^2$}-supercritical case.
\newblock {\em Discrete Contin. Dyn. Syst. A\/} (2020).

\bibitem{LaxMorawPhillip62}
{\sc Lax, P.~D., Morawetz, C.~S., and Phillips, R.~S.}
\newblock The exponential decay of solutions of the wave equation in the
  exterior of a star-shaped obstacle.
\newblock {\em Bull. Amer. Math. Soc. 68\/} (1962), 593--595.

\bibitem{LaxMorawPhillip63}
{\sc Lax, P.~D., Morawetz, C.~S., and Phillips, R.~S.}
\newblock Exponential decay of solutions of the wave equation in the exterior
  of a star-shaped obstacle.
\newblock {\em Comm. Pure Appl. Math. 16\/} (1963), 477--486.

\bibitem{DongSmithZhang2012}
{\sc {Li}, D., {Smith}, H., and {Zhang}, X.}
\newblock {Global well-posedness and scattering for defocusing energy-critical
  NLS in the exterior of balls with radial data.}
\newblock {\em {Math. Res. Lett.} 19}, 1 (2012), 213--232.

\bibitem{MeVe98}
{\sc Merle, F., and Vega, L.}
\newblock Compactness at blow-up time for {$L\sp 2$} solutions of the critical
  nonlinear {S}chr\"odinger equation in 2{D}.
\newblock {\em Internat. Math. Res. Notices}, 8 (1998), 399--425.

\bibitem{Morawetz61}
{\sc Morawetz, C.~S.}
\newblock The decay of solutions of the exterior initial-boundary value problem
  for the wave equation.
\newblock {\em Comm. Pure Appl. Math. 14\/} (1961), 561--568.

\bibitem{Morawetz62}
{\sc Morawetz, C.~S.}
\newblock The limiting amplitude principle.
\newblock {\em Comm. Pure Appl. Math. 15\/} (1962), 349--361.

\bibitem{MoraTalstonStrauss77}
{\sc Morawetz, C.~S., Ralston, J.~V., and Strauss, W.~A.}
\newblock Decay of solutions of the wave equation outside nontrapping
  obstacles.
\newblock {\em Comm. Pure Appl. Math. 30}, 4 (1977), 447--508.

\bibitem{MoraRalstonStraussCorec78}
{\sc Morawetz, C.~S., Ralston, J.~V., and Strauss, W.~A.}
\newblock Correction to: ``{D}ecay of solutions of the wave equation outside
  nontrapping obstacles'' ({C}omm. {P}ure {A}ppl. {M}ath. {\bf 30} (1977), no.
  4, 447--508).
\newblock {\em Comm. Pure Appl. Math. 31}, 6 (1978), 795.

\bibitem{NakanishiSchlag12}
{\sc Nakanishi, K., and Schlag, W.}
\newblock Global dynamics above the ground state energy for the cubic {NLS}
  equation in {3D}.
\newblock {\em Calc. Var. Partial Differential Equations 44}, 1-2 (2012),
  1--45.

\bibitem{PlVe09}
{\sc Planchon, F., and Vega, L.}
\newblock Bilinear virial identities and applications.
\newblock {\em Ann. Sci. \'Ec. Norm. Sup\'er. (4) 42}, 2 (2009), 261--290.

\bibitem{Tao06BO}
{\sc Tao, T.}
\newblock {\em Nonlinear dispersive equations}, vol.~106 of {\em CBMS Regional
  Conference Series in Mathematics}.
\newblock Published for the Conference Board of the Mathematical Sciences,
  Washington, DC, 2006.
\newblock Local and global analysis.

\bibitem{Weinstein82}
{\sc Weinstein, M.~I.}
\newblock Nonlinear {S}chr\"odinger equations and sharp interpolation
  estimates.
\newblock {\em Comm. Math. Phys. 87}, 4 (1982/83), 567--576.

\bibitem{Wilcox59}
{\sc Wilcox, C.~H.}
\newblock Spherical means and radiation conditions.
\newblock {\em Arch. Rational Mech. Anal. 3\/} (1959), 133--148.

\end{thebibliography}
%\bibliography{main.bbl}
\end{document}